\documentclass[10pt]{amsart}
\usepackage{graphicx}
\usepackage{amssymb}
\usepackage{amsmath,amsthm}
\usepackage{hyperref}
\usepackage{amsthm}
\usepackage[english]{babel}
\usepackage{mathrsfs}


\newtheorem{thm}{Theorem}[section]
\newtheorem*{thm*}{Theorem}
\newtheorem{cor}[thm]{Corollary}
\newtheorem*{cor*}{Corollary}
\newtheorem{lem}[thm]{Lemma}
\newtheorem{prop}[thm]{Proposition}
\theoremstyle{definition}

\newtheorem{rem}[thm]{Remark}

\newtheorem{examp}[thm]{Example}

\newtheorem{conj}[thm]{Conjecture}
\newcommand{\car}{\curvearrowright}

\newcommand{\dd}{\delta}
\newcommand{\G}{\Gamma}
\newcommand{\g}{\gamma}
\newcommand{\e}{\varepsilon}
\def\La{\Lambda}
\def\b{\beta}
\def\ra{\rightarrow}
\def\le{\leqslant}
\def\ge{\geqslant}
\def\l{\lambda}

\newcommand{\N}{{\mathbb N}}
\newcommand{\Z}{{\mathbb Z}}

\newcommand{\HH}{{\mathcal H}}
\newcommand{\KK}{{\mathcal K}}

\newcommand{\NN}{{\mathcal N}}

\newcommand{\UU}{{\mathcal U}}

\newcommand{\oo}{{\overline \otimes}}

\newcommand{\id}{\operatorname{id}}


\title[Unique group-measure space decomposition]{Some unique group-measure space decomposition results}
\author{Ionut Chifan}
\address{Ionut Chifan, Vanderbilt University, 1326 Stevenson Center, Nashville, TN 37240 and IMAR, Bucharest, Romania }
\email{ionut.chifan@vanderbilt.edu}
\author{Jesse Peterson}
\address{Jesse Peterson, Vanderbilt University, 1326 Stevenson Center, Nashville, TN 37240}
\email{jesse.d.peterson@vanderbilt.edu} \subjclass{}

\keywords{} \dedicatory{}
\date{\today}
\thanks{The first author's research is partially supported by NSF Grant 1001286}

\thanks{The second author's research is partially supported by NSF Grant 0901510 and a grant from the Alfred P. Sloan Foundation}


\begin{document}

\begin{abstract}

Using an approach emerging from the theory of closable derivations on von Neumann algebras, we exhibit a class of groups $\mathcal{CR}$
satisfying the following property: given \emph{any} groups $\G_1,\G_2\in \mathcal {CR}$, then \emph{any} free, ergodic, measure preserving
 action on a probability space $\G_1\times \G_2\car X$ gives rise to a von Neumann algebra with \emph{unique} group measure space
Cartan subalgebra. Pairing this result with Popa's Orbit Equivalence Superrigidity Theorem we obtain new examples of $W^*$-superrigid actions.
\end{abstract}

\maketitle \tableofcontents


\section*{Introduction and Notations}


The group measure space construction of Murray and von Neumann \cite{MvN1, MvN2} associates a finite von Neumann algebra, denoted by
$L^{\infty}(X)\rtimes \G$, to every measure preserving action $\G\car X$ of a countable group $\G$ on a standard probability space $X$. When
the action is free and ergodic, $L^{\infty}(X)\rtimes \G$ is a II$_1$ factor containing $L^{\infty}(X)$ as a \emph{Cartan subalgebra}, i.e., a
maximal abelian selfadjoint subalgebra with its normalizing group $\mathcal N_{L^{\infty}(X)\rtimes \G}(L^{\infty}(X))$ generating
$L^{\infty}(X)\rtimes \G$ as a von Neumann  algebra.

Two free, ergodic actions $\G\car X$ and $\La\car Y$ are called $W^*$-\emph{equivalent} if the corresponding
 group measure space von Neumann algebras are isomorphic. Also, two actions $\G\car X$ and $\La\car Y$ are said to be
\emph{conjugate} if there is a measure space isomorphism $\Phi:X\ra Y$ and a group isomorphism $\theta:\G\ra\La$ such that for all $\g\in \G$
we have $\Phi(\g x)=\theta(\g)\Phi(x)$ for almost every $x\in X$.

 Naturally, a conjugacy between two actions implements an isomorphism between the associated von Neumann algebras. Therefore the \emph{$W^*$-equivalence class}
  of an action always contains its \emph{conjugacy class}. Moreover, when $\G$ is infinite amenable, the \emph{$W^*$-equivalence class} of a free,
  ergodic action $\G\car X$  contains \emph{all} free, ergodic actions of \emph{all} infinite amenable groups \cite{connes76}, thus being (much)
  larger than its \emph{conjugacy class}. This is an instance when the von Neumann algebra arising from an action remembers little of the initial group/action data. The opposite phenomenon -
 when aspects of an action can be recovered from its von Neumann algebra - is labeled a \emph{$W^*$-rigidity} phenomenon.

An extreme form of rigidity for actions is \emph{$W^*$-superrigidity}. A free, ergodic, measure preserving action $\G\car X$ is called
$W^*$-\emph{superrigid} if its \emph{$W^*$-equivalence class} coincides with its \emph{conjugacy class}. In other words, whenever $\La\car Y$
is a free, ergodic, measure preserving action, any isomorphism between the von Neumann algebras $L^{\infty}(X)\rtimes\G$ and
$L^{\infty}(Y)\rtimes\La$ entails a conjugacy between the actions $\G\car X $ and $\La\car Y$. Producing examples of $W^*$-superrigid actions
$\G\car X$ is a difficult problem as it incorporates two rigidity phenomena which, even individually, are usually hard to establish.
\begin{enumerate}
\item \emph{Orbit Equivalence (OE) superrigidity}: If an arbitrary free, ergodic, p.m.p.\ action $\La\car Y$ is orbit equivalent with
$\G\car X$, i.e., there is an isomorphism $\Phi:X\ra Y$ such that $\Phi(\G x)=\La\Phi(x)$ for almost every $x\in X$, then the actions $\G\car
X$ and $\La\car Y$ are conjugated.
\item \emph{Uniqueness of group measure space Cartan subalgebras}: If the von Neumann algebra $N=L^{\infty}(X)\rtimes \G$  corresponding
to the action $\G\car X$ admits another group measure space decomposition $N=L^{\infty}(Y)\rtimes \La$, the group measure space Cartan
subalgebras $ L^{\infty}(X)$ and $ L^{\infty}(Y)$ are conjugated by a unitary in $N$.
\end{enumerate}

Due to a sustained effort over the last decade from both ergodic theory and Popa's \emph{deformations/rigidity theory} we have now a number of
examples of actions known to be OE-superrigid. See, for instance \cite{furman99a}, \cite{furman99b}, \cite{kida2006}, \cite{popa2006},
\cite{popa07b}, \cite{ioana08}, or \cite{kida-2009}.

However, the second problem was out of reach for an extended time. The breakthrough in this direction came only a few years ago with the
seminal work of Ozawa and Popa \cite{ozawapopa07}. Using techniques from deformation/rigidity theory, they showed that the von Neumann algebras
associated with profinite actions of products of nonamenable, free groups have unique Cartan subalgebras. Similar results covering more general
examples can be found in \cite{ozawapopa2010}.

Despite these important results, instances when an action simultaneously satisfies both forms of rigidity remained elusive. Recently, the
second author managed to prove the existence of actions satisfying the second type of rigidity while simultaneously virtually satisfying the
first type \cite{peterson2009}. More precisely, by developing an infinitesimal analysis for the resolvent deformations associated to closable
derivations on von Neumann algebras, it was shown that the von Neumann algebras arising from free, profinite actions of free products groups,
$\G_1\ast\G_2$ with $\G_1$ non-Haagerup, have unique group measure space Cartan subalgebra. Then, using a Baire category argument, results from
\cite{ioana08} and \cite{ozawapopa2010} were combined to show the existence of virtually $W^*$-superrigid actions.

Shortly after, Popa and Vaes proved that arbitrary free, ergodic actions of groups belonging to a large class of amalgamated free products give
rise to von Neumann algebras with unique group measure space Cartan subalgebra \cite{popavaes09}. When combining this with either Kida's
OE-superrigidity theorem in \cite{kida-2009} or with Popa's OE-superrigidity theorem in \cite{popa07b} it led to concrete examples of
$W^*$-superrigid actions, such an example being any free, mixing p.m.p.\ action of the group $PSL(n,\mathbb Z)\ast_{T_n} PSL(n,\mathbb Z)$. The
methods Popa and Vaes developed to prove their result brought a new insight to the deformation/rigidity technology through the introduction of
their ``transfer lemmas''. Outgrowths of these methods were used subsequently by Fima and Vaes to obtain examples of $W^*$-superrigid actions
covering other classes of groups, e.g., HNN-extensions \cite{fimavaes2010}.

Recently, Ioana was able to prove a striking result showing that the Bernoulli actions of \emph{any} property (T) group is $W^*$-superrigid
\cite{adi2010}.  While still heavily relying on deformation/rigidity theory, his methods brought a considerable amount of innovation at the
technical level. See also \cite{ioana-popa-vaes-2010} for further results in this direction.\vskip 0.05in

 Our paper focuses mainly on obtaining new
examples of actions which give rise to von Neumann algebras with unique group measure space Cartan subalgebras. To introduce the result we
consider the class $\mathcal{CR}$ of all countable, infinite conjugacy class groups $\G$ satisfying the following conditions:\begin{enumerate}
\item There exists an unbounded cocycle $c:\G \to \KK$ into a mixing representation;
\item There exists a non-amenable, infinite conjugacy class subgroup $\Omega$ of $\G$ such that the pair $(\G, \Omega)$ has relative property (T).
\end{enumerate}

Using an approach which derives from the theory of closable derivations on von Neumann algebras we show that \emph{any} free, ergodic p.m.p.\
action $\G\car X$ of \emph{any} group $\G$ belonging to $\mathcal {CR}$ gives rise to a von Neumann algebras with unique group measure space
Cartan subalgebra, thus adding to the examples found in \cite{popavaes09} and \cite{fimavaes2010}. The reader may also consult Theorem
\ref{uniquecartan3} for a similar result covering a class of groups larger than $\mathcal{CR}$.

 Moreover, when considering products of groups belonging to class $\mathcal{CR}$ we obtain the following:
\begin{thm*}[Corollary~\ref{cor:uniquegmscartan} below]\label{uniquegmscartan2}
 If $1 \le m \le 2$ let $\G=\G_1\times\cdots \times \G_m$ where $\G_i\in\mathcal{CR}$ for all $1 \le i \le 2$, and let $\G \car X$ be a free, ergodic p.m.p.\ action on a probability space $X$. If
there exists another free, p.m.p.\ action $\La\car Y$ on a probability space $Y$  such that $N=L^{\infty}(X)\rtimes\G = L^{\infty}(Y)\rtimes
\La$ then one can find a unitary $u\in N$ such that $uL^{\infty}(Y)u^* =L^{\infty}(X)$.
\end{thm*}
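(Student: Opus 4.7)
\medskip
\noindent\emph{Proof Plan.} The case $m=1$ is immediate from Theorem~\ref{uniquecartan3}, applied directly to $\G=\G_1\in\mathcal{CR}$, so the substantive content is the case $m=2$, where $\G=\G_1\times\G_2$. Set $A=L^{\infty}(X)$ and $B=L^{\infty}(Y)$, both viewed as Cartan subalgebras of $N$. By standard intertwining results for Cartan subalgebras of a II$_1$ factor, it suffices to establish the intertwining $B\prec_N A$ in Popa's sense; this will yield the desired unitary $u\in N$ with $uBu^*=A$.

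The plan is to extend the derivation-based argument of Theorem~\ref{uniquecartan3} to this product setting. Each factor $\G_i\in\mathcal{CR}$ supplies two ingredients: the unbounded cocycle $c_i:\G_i\to\KK_i$ into a mixing representation, from which one builds a closable derivation $\partial_i$ and an associated resolvent deformation $\theta_t^{(i)}$ on $L^{\infty}(X)\rtimes\G_i$, and the relative property~(T) subgroup $\Omega_i\subset\G_i$, which rigidifies $L(\Omega_i)$ under the deformation $\theta_t^{(i)}$. Since $\G_1$ and $\G_2$ commute as subgroups of $\G$, both derivations extend naturally to commuting closable derivations on $N$, and the deformations $\theta_t^{(1)}$ and $\theta_t^{(2)}$ commute on $N$.

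With both deformations available, the proof runs the deformation/rigidity analysis of Theorem~\ref{uniquecartan3} twice. A first application with $(\partial_1,\Omega_1)$ restricts $B$ so that, up to intertwining, it lies inside $L^{\infty}(X)\rtimes\G_2\subset N$. A second application with $(\partial_2,\Omega_2)$, performed relative to this intermediate subalgebra, then forces $B\prec_N A$. Combined with the Cartan-conjugacy consequence of intertwining, this produces the required unitary.

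The main obstacle is ensuring that the two iterations cooperate. Commutativity of $\G_1$ and $\G_2$, and hence of the two deformations, is essential here: it guarantees that the rigidity obtained in the first step survives the second deployment, so the single-factor analysis can be iterated without the two factors interfering. The bookkeeping for this iteration — keeping track of corners, projections, and the conjugating unitaries produced at each stage — is the principal technicality, but follows standard patterns in deformation/rigidity theory.
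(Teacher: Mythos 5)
Your reduction to the intertwining $B\prec_N A$ followed by Popa's conjugacy criterion is the right skeleton, and your treatment of $m=1$ is fine ($\mathcal{CR}\subset\mathcal{ACR}$ with $\Sigma=\{e\}$, so Theorem~\ref{uniquecartan3} applies). The gap is in the iteration for $m=2$. The single-factor machinery --- both the transfer lemma and the uniform-convergence criterion --- is not a statement about an abelian subalgebra that merely sits inside some intermediate algebra: it requires the \emph{group-measure space} decomposition $N=B\rtimes\La$ in order to define the comultiplication $\Delta(b_\l v_\l)=b_\l v_\l\otimes v_\l$ and transfer the rigidity of $\Omega$ to large subsets of $\La$. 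After your first step you only know, up to intertwining, that a corner of $B$ sits inside $L^\infty(X)\rtimes\G_2$; by \cite{jonespopa} it is then a Cartan subalgebra there, but there is no reason for it to be a \emph{group-measure space} Cartan subalgebra of $L^\infty(X)\rtimes\G_2$, i.e., no group acting on $B$ whose crossed product recovers that intermediate algebra. Hence the $m=1$ argument cannot be re-deployed ``relative to this intermediate subalgebra,'' and commutativity of the two deformations does not repair this: the obstruction is the loss of the crossed-product structure over $B$, not interference between $\theta^{(1)}_t$ and $\theta^{(2)}_t$. This is precisely the difficulty the paper isolates in its introduction.

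What the paper does instead is keep the global decomposition $N=B\rtimes\La$ throughout and prove a product-adapted transfer lemma (Lemma~\ref{transfer}), whose engine is Lemma~\ref{lem:notcontained}: for each $j$ the maximal projection $p_j$ on which $\hat\dd^j_\alpha$ converges uniformly on $(N\oo L\La)_1$ satisfies $p_j\neq 1$ and $\Delta(L\Omega)(1-p_j)\nprec_{N\oo N}N\oo(A\rtimes\G_j)$. This yields, for each $i$ \emph{separately} (not sequentially), uniform convergence of $\dd^i_\alpha$ on $(B)_1$ and hence $B\prec_N A\rtimes\G_{(i)}$ via Theorem~\ref{unifconvonnormalizer}. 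The two statements are then combined not by iteration but by the ultrapower characterization of intertwining (Proposition~\ref{intertwining} and Corollary~\ref{reducingintertwininginproducts}), which uses semiregularity of $B$ to intersect the containments $B^\omega\subset(A\rtimes\G_{(i)})^\omega\rtimes\G_i$ for $i=1,2$ and conclude $B\prec_N A$. Your proposal is missing both of these ingredients, and the second one (how to pass from two partial intertwinings to $B\prec_N A$) is also nontrivial and left unaddressed by your ``bookkeeping'' remark.
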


\noindent If $\G$ is a group with positive first $\ell^2$-Betti number ($\b^{(2)}_1(\G)>0$) there exists an unbounded cocycle into the left
regular representation \cite{bekkavalette, petersonthom2007}, and hence $\G$ satisfies condition (1) above. Therefore $\mathcal{CR}$ contains
all amalgamated free products $\G_1\ast_{\Omega}\G_2$ which satisfy the following properties (Proposition 3.1 in \cite{petersonthom2007}):
\begin{itemize}
  \item $\G_i$ are infinite groups such that $\b^{(2)}_1(\G_1)+ \b^{(2)}_1(\G_2)+\frac{1}{|\Omega|}>\b^{(2)}_1(\Omega)$;
  \item  $\G_1$ contains an infinite, i.c.c. group with property (T).
  \end{itemize}
Similarly, $\mathcal{CR}$ contains all HNN extensions  $HNN(\G,\Omega,\theta)$ which satisfy the following properties (Proposition 3.1 in
\cite{petersonthom2007}):
\begin{itemize}
  \item $\G$ is an infinite group such that $\b^{(2)}_1(\G)+\frac{1}{|\Omega|}>\b^{(2)}_1(\Omega)$;
  \item  $\G$ contains an infinite group with property (T).
  \end{itemize}
It also follows from Theorem 3.2 in \cite{petersonthom2007} that $\mathcal{CR}$ contains all groups $\G$ which have infinite subgroups $\G_1,
\G_2$ and a presentation $\G = \langle \G_1, \G_2 \ | \ r_1^{w_1}, \ldots, r_k^{w_k} \rangle$ for elements $r_1, \ldots, r_k \in \G_1 * \G_2$
and positive integers $w_1, \ldots, w_k$ satisfying the following properties:
\begin{itemize}
\item $r_i^l \not= e \in \G$, for $1 < l < w_i$;
\item $1 + \b_1^{(2)}(\G_1) + \b_1^{(2)}(\G_2) - \Sigma_{j = 1}^k \frac{1}{w_j} > 0$;
\item $\G_1$ contains an infinite, i.c.c. group with property (T).
\end{itemize}

The proof of the above theorem is obtained in several steps and it combines ideas from \cite{peterson2009} and some transfer lemmas \`a la
Popa-Vaes (see Lemma 3.2 in \cite{popavaes09}). We briefly explain below the idea behind the proof in the case $m=1$. First, a lemma similar to
Lemma 3.2 in \cite{popavaes09} is used to transfer, at the level of resolvent deformations, the rigidity part of the group $\G$ to ``large''
subsets of the mysterious group $\La$. In turn, this mild form of rigidity is used through a refinement of the infinitesimal analysis developed
in \cite{peterson2009} to show that the resolvent deformation converges to zero, uniformly on the unit ball of the ``mysterious'' Cartan
subalgebra $L^{\infty}(Y)$. Finally, this stronger ``rigid behavior'' of $L^{\infty}(Y)$ with respect to the the resolvent deformation is
exploited in the same way as in \cite{peterson2006} to completely locate the position of $L^{\infty}(Y)$ inside $N$.

While more technical, the proof of the product case follows the same general strategy. The difficulty however is that if $L^\infty(Y) \subset
L^\infty(X) \rtimes \G$ is a group-measure space Cartan subalgebra and $L^\infty(Y) \subset L^\infty(X) \rtimes \G_1$, then by \cite{jonespopa}
$L^\infty(Y)$ is also a Cartan subalgebra of $L^\infty(X) \rtimes \G_1$; however, there is no obvious reason for $L^\infty(Y)$ to again be a
\emph{group-measure space} Cartan subalgebra of $L^\infty(X) \rtimes \G_1$. This difficulty is overcome by developing a transfer property
(Lemma~\ref{transfer} below) which is applicable in the setting of products.

We do not know if the von Neumann algebras $N$ considered in the theorem above do have a unique, up to unitary conjugacy, Cartan subalgebra.

\vskip 0.05in

Using his deformation/rigidity theory, Popa discovered a natural class of OE-superrigid actions of product groups, showing in \cite{popa07b}
the following: Given any product of nonamenable groups $\G=\G_1\times\G_2$ and any countable $\G$-set $I$ such that for every $i\in I$, its
$\G_1$-orbit is infinite and its $\G_2$-stabilizer is amenable, the corresponding generalized Bernoulli action $\G\car(X,\mu)^I$, if free, is
OE-superrigid. The reader may notice that even though the actions considered are somewhat particular there is a large degree of generality at
the level of acting groups $\G_i$. Therefore, when letting the groups $\G_i$ to be in our class $\mathcal{CR}$, and combining it with
Theorem~\ref{uniquegmscartan2} above leads to the following examples of $W^*$-superrigid actions.

\begin{cor*}
Consider $\G=\G_1\times\G_2$
 where $\G_i\in\mathcal{CR}$ and let $I$ be a countable $\G$-set such that for all $i\in I$ the orbit $\G_1i$ is infinite and the stabilizer $\{\g\in \G_2 \ | \ \g i=i\}$
 is amenable. Then the corresponding generalized Bernoulli action $\G\car(X,\mu)^I$, if free, is $W^*$-superrigid.
\end{cor*}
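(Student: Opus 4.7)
The plan is to combine Theorem~\ref{uniquegmscartan2} above with Popa's orbit equivalence superrigidity theorem for generalized Bernoulli actions of products of nonamenable groups from \cite{popa07b}. Before invoking either result I would first verify that every $\G_i \in \mathcal{CR}$ is nonamenable: this is immediate from condition $(2)$ in the definition of $\mathcal{CR}$, which provides a nonamenable subgroup $\Omega_i \subset \G_i$. The remaining hypotheses required by Popa's theorem, namely freeness of the action and the infinite-orbit/amenable-stabilizer conditions on $I$, are precisely the standing assumptions of the corollary.

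Given these verifications, suppose $\La \car (Y,\nu)$ is any free, ergodic p.m.p.\ action together with a $*$-isomorphism $\Theta : L^{\infty}(X^I) \rtimes \G \to L^{\infty}(Y) \rtimes \La =: N$. Then $A := \Theta(L^{\infty}(X^I))$ and $B := L^{\infty}(Y)$ are two group-measure space Cartan subalgebras of the same II$_1$ factor $N$. Since $\G = \G_1 \times \G_2$ with $\G_i \in \mathcal{CR}$, Theorem~\ref{uniquegmscartan2} applies and yields a unitary $u \in N$ with $uBu^* = A$. By the classical correspondence between conjugacy of Cartan subalgebras of a crossed product and isomorphism of the associated orbit equivalence relations (Singer, Feldman--Moore), this unitary produces an orbit equivalence between $\G \car X^I$ and $\La \car Y$.

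The last step is to appeal to Popa's OE-superrigidity theorem from \cite{popa07b}, whose hypotheses were checked in the first paragraph, to upgrade this orbit equivalence to a conjugacy of the actions $\G \car X^I$ and $\La \car Y$. Since $\La \car Y$ and $\Theta$ were arbitrary, this is precisely the statement that $\G \car (X,\mu)^I$ is $W^*$-superrigid. The proof is really a clean merger of two deep black boxes: Theorem~\ref{uniquegmscartan2} (which handles the uniqueness-of-Cartan half) and Popa's OE-superrigidity theorem (which handles the conjugacy half). I do not anticipate any serious obstacle beyond the bookkeeping needed to align hypotheses; the only mildly subtle point is to make the passage from ``unitarily conjugate group-measure space Cartan subalgebras of $N$'' to ``orbit equivalent actions $\G \car X^I$ and $\La \car Y$'' fully explicit, so that Popa's theorem can be applied with exactly the free, ergodic, p.m.p.\ action $\La \car Y$ one started with.
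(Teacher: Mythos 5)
Your proposal is correct and follows exactly the paper's intended argument: the uniqueness of the group-measure space Cartan subalgebra (Corollary~\ref{cor:uniquegmscartan}) reduces any $W^*$-equivalence to an orbit equivalence via the Singer/Feldman--Moore correspondence, and Popa's OE-superrigidity theorem for generalized Bernoulli actions of products of nonamenable groups then upgrades this to a conjugacy. The hypothesis checks you flag (nonamenability of each $\G_i$ from condition (2) of $\mathcal{CR}$, and the orbit/stabilizer conditions on $I$) are precisely the bookkeeping the paper performs in Section~\ref{sec:applications}.
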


Monod and Shalom considered in \cite{monodshalom} the class $\mathcal{C}_{\text{reg}}$ of all groups with nonvanishing second bounded
cohomology with coefficients in the left regular representation. In the same paper they proved, by using bounded cohomology methods, that any
free, irreducible, aperiodic, action of products of such groups is close to being OE-superrigid in the following sense: whenever this action is
orbit equivalent to any other free, mildly mixing action then the two actions must be conjugated. For a precise statement, as well as the
definitions of mild mixing and aperiodicity, the reader may consult Section~\ref{sec:applications} or Theorem 1.10 in \cite{monodshalom}. Monod
and Shalom proved the necessity of this condition for their statement; however it will be interesting to understand if their actions are
OE-superrigid when one assumes, in addition, that they are \emph{mixing}.

The class $\mathcal{C}_{\text{reg}}$ is quite large (see Section~\ref{sec:applications} below or Example 1.1 in \cite{monodshalom}) and it
intersects nontrivially with our class $\mathcal {CR}$. Basic examples of groups that belong to both classes include all free products of a
nontrivial group and an infinite property (T) group. For instance, if $|\G|\ge 2$ then we have $\G\ast SL(3,\mathbb Z )\in \mathcal{CR}\cap
\mathcal C_{\text{reg}} $.

Consequently, for such groups, Monod and Shalom's result together with our main theorem imply the following $W^*$-strong rigidity statement.

\begin{cor*}\label{strongrig2}
Let $\G=\G_1\times\G_2$ with $\G_i\in\mathcal{CR}\cap \mathcal C_{\text{reg}}$ for $i = 1,2$, and let $\G\car X$ be a free, irreducible,
aperiodic action. Suppose that $\La\car Y$ is mildly mixing action. If the action $\G\car X$ is $W^*$-equivalent with $\La\car Y$ then $\G\car
X$ and $\La\car Y$ are conjugate.
\end{cor*}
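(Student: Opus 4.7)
The proof plan is to combine the paper's main result on uniqueness of group-measure space Cartan subalgebras (Corollary~\ref{cor:uniquegmscartan}) with Monod and Shalom's orbit equivalence rigidity theorem (Theorem 1.10 in \cite{monodshalom}). The argument naturally splits into a von Neumann algebraic step followed by an ergodic-theoretic step.

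First I would translate the $W^*$-equivalence into a statement about a single II$_1$ factor carrying two group-measure space Cartan subalgebras. Concretely, let $\pi : L^\infty(X) \rtimes \G \to L^\infty(Y) \rtimes \La$ be a $*$-isomorphism implementing the $W^*$-equivalence and set $N := L^\infty(X) \rtimes \G$; then both $A := L^\infty(X)$ and $B := \pi^{-1}(L^\infty(Y))$ are group-measure space Cartan subalgebras of $N$. Since $\G = \G_1 \times \G_2$ with each $\G_i \in \mathcal{CR}$, Corollary~\ref{cor:uniquegmscartan} furnishes a unitary $u \in N$ with $u B u^* = A$. I would then invoke the classical Feldman-Moore / Singer correspondence to promote this unitary conjugacy of Cartan subalgebras to an orbit equivalence between the actions $\G \car X$ and $\La \car Y$: the unitary $u$, together with the normalizers on each side, gives a measure space isomorphism $\theta : X \to Y$ sending $\G$-orbits to $\La$-orbits almost everywhere.

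At this stage I would invoke Monod-Shalom's Theorem 1.10 in \cite{monodshalom}: the assumptions $\G_i \in \mathcal{C}_{\text{reg}}$, together with $\G \car X$ being free, irreducible and aperiodic, and $\La \car Y$ being (implicitly free, as arising from a group-measure space decomposition) mildly mixing, are precisely those required to upgrade such an orbit equivalence to an honest conjugacy, which is the desired conclusion. The main obstacle here is really not an obstacle at all: all of the substantive work has already been carried out in the paper (on the von Neumann algebraic side) and in \cite{monodshalom} (on the ergodic-theoretic side). One only has to verify that the hypotheses of the two inputs line up, and that the class $\mathcal{CR} \cap \mathcal C_{\text{reg}}$ is nonempty—something the excerpt explicitly records via examples such as $H \ast \SL(3,\Z)$ for any nontrivial group $H$.
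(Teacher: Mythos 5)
Your proposal is correct and follows essentially the same route as the paper: pass to two group-measure space Cartan subalgebras in one factor, apply Corollary~\ref{cor:uniquegmscartan} to conjugate them by a unitary, deduce an orbit equivalence via the Feldman--Moore/Singer correspondence, and finish with Monod--Shalom's Theorem 1.10. The hypothesis alignment you check (freeness of $\La\car Y$ being implicit in the group-measure space decomposition, and nonemptiness of $\mathcal{CR}\cap\mathcal C_{\text{reg}}$) is exactly what the paper relies on.
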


\noindent{\bf Organization of the paper.} This paper contains seven sections and one appendix. In the first section we review Popa's
intertwining techniques and we prove a few conjugacy results for actions of product groups. The first part of Section~\ref{sec:derivations}
collects important background on real, closable derivations along with their resolvent deformations. We show in Lemma~\ref{orthog} that
bimodules arising from mixing representations are mixing and, through an adaptation of technology from \cite{peterson2009}, we use this to
prove in Section \ref{sec:convergence} a criterion for the uniform convergence of the resolvent deformations on certain subalgebras (Theorem
~\ref{derivunifconv}). In a similar fashion with Lemma 3.2 in \cite{popavaes09} we prove in Section~\ref{sec:transfer} a transfer
Lemma~\ref{transfer} for actions of product of groups in class $\mathcal{CR}$.  In  turn, this transfer lemma is used in combination with
Theorem ~\ref{derivunifconv} to prove the unique group measure Cartan decomposition result in Corollary~\ref{cor:uniquegmscartan}. In Section
\ref{sec:applications} we use Corollary~\ref{cor:uniquegmscartan} to derive our main applications to $W^*$-superrigidity,
Corollaries~\ref{w-superrigidity} and~\ref{strongrig}. In the last section we exhibit more examples of von Neumann algebras with unique group
measure space Cartan subalgebras (Theorem \ref{uniquecartan3}). \vskip 0.1in

\noindent {\bf Notations.} In this paper all finite von Neumann algebras $N$ that we will be working with are assumed to be endowed with a
normal faithful tracial state, which we will denote by $\tau$. The trace $\tau$ induces a norm on $N$ by letting
$\|x\|_2=\tau(x^*x)^{\frac{1}{2}}$. As usual, $L^2N$ denotes the $\|\cdot \|_2$-completion of $N$. A Hilbert space $\mathcal H$ is a $N$-{\it
bimodule} if it is equipped with commuting left and right Hilbert $N$-module structures.

Given a von Neumann subalgebra $Q\subset N$ we denote by $E_{Q}:N\ra N$ the unique $\tau$-preserving {\it conditional expectation} onto $Q$. If
$e_Q$ is the orthogonal projection of $L^2N$ onto $L^2(Q)$ then $\langle N,e_Q\rangle $ denotes the \emph{basic construction}, i.e., the von
Neumann algebra generated by $N$ and $e_Q$ in $\mathcal B(L^2N)$. The span of $\{xe_Qy \ | \ x,y\in N\}$ forms a dense $*$-subalgebra of
$\langle N,e_Q\rangle$ and there exists a semifinite trace $Tr:\langle N,e_Q\rangle\ra \mathbb C$ given by the formula $Tr(xe_Qy)=\tau(xy)$ for
all $x,y\in N$. We denote by $L^2\langle N,E_Q\rangle$ the Hilbert space obtained with respect to this trace.

The {\it normalizer of $Q$ inside $N$}, denoted $\mathcal N_{N}(Q)$, consists of all unitary elements $u\in \mathcal U(N)$ satisfying
$uQu^*=Q$. A maximal abelian selfadjoint subalgebra $A$ of $N$, abbreviated MASA, is called  a \emph{Cartan subalgebra} if the von Neumann
algebra generated by its normalizer in $N$, $\mathcal N_{N}(A)''$ is equal to $N$. Also, $A$ is called \emph{semiregular} if $\mathcal
N_{N}(A)''$ is a subfactor of $N$.

If $\G\car^{\sigma}A $ is a trace preserving action by automorphisms of a countable group $\G$ on a finite von Neumann algebra $A$
 we denote by $N=A\rtimes_{\sigma}\G$ the crossed product von Neumann algebra associated with the action. When no confusion will arise we will
 drop the symbol $\sigma$.  Given a subset $F\subset \G$, we will denote by $P_F$ the
 orthogonal projection on the closure of the span of $\{au_{\g} \ | \ a\in A;\g\in F \}$.

Throughout this paper $\omega$ denotes a free ultrafilter on $\mathbb N$. Also given $(N,\tau)$ a finite von Neumann algebra we denote by
$(N^{\omega},\tau^{\omega})$ its ultrapower algebra, i.e., $N^{\omega}=\ell^{\infty}(\mathbb N ,N)/\mathcal I$ where the trace is defined as
$\tau^{\omega}((x_n)_n)=\lim_{n\ra\omega}\tau(x_n)$ and $\mathcal I$ is the ideal consisting of all $x\in \ell^{\infty}(\mathbb N, N )$ such
that $\tau^{\omega}(x^*x)=0$. Notice that $N$ embeds naturally into $N^{\omega}$ by considering constant sequences. Many times when working
with $N=A\rtimes \G$ we will consider the subalgebra $A^{\omega}\rtimes \G$ of $N^{\omega}$. For reader's convenience we remark that every
element $x=(x_n)_n\in A^{\omega}\rtimes \G$ satisfies that $\inf_{F\subset \G,\text{finite}}\lim_{n\ra\omega}\|x_n-P_F(x_n)\|_2=0$.

\vskip 0.1in

\noindent{\bf Acknowledgement.} We are grateful to Stefaan Vaes for numerous useful comments and for pointing out an error in an early version
of this paper.

\section{Intertwining techniques}\label{sec:intertwining}

We start this section by reviewing Popa's intertwining techniques from \cite{popa2004}. Given $N$ a finite von Neumann algebra, let $P\subset
fNf$, $Q\subset N$ be diffuse subalgebras for some projection $f\in N$. One say that \emph{a corner of $P$ can be embedded into $Q$ inside $N$}
if there exist two nonzero projections $p\in P$, $q\in Q$, a nonzero partial isometry $v\in pNq$, and a $*$-homomorphism $\psi:pPp\ra qQq$ such
that $v\psi(x)=xv$ for all $x\in pPp$. Throughout this paper we denote by $P\prec_{N}Q$ whenever this property holds and by $P\nprec_{N}Q$
otherwise.

Popa established efficient criteria for the existence of such intertwiners (Theorems 2.1-2.3 in \cite{popa2004}). Particularly useful in
applications is the analytic criterion described in Corollary 2.3 of \cite{popa2004}. Considering the case of crossed-products von Neumann
algebras, the next proposition is a reformulation of Popa's result using the ultrapower algebras setting.

\begin{prop}\label{intertwining}Let $\G$ be a countable group, $A$ a finite von Neumann algebra and $\G\car A$ is a trace preserving action.
Suppose that $N=A\rtimes\G$ and $B\subset N$ is a subalgebra which is either abelian or a II$_1$ factor. If $q\in B'\cap N $ is a nonzero
projection then the following are equivalent:
\begin{enumerate}
\item $Bq\nprec_{N}A$.
\item For any nonzero projection $p\in (B'\cap N )^{\omega}$  with $p\le q$ we have $B^{\omega}p\nsubseteq A^{\omega}\rtimes \G $.
\item For any nonzero projection $p\in B'\cap N $ with $p\le q$ we have $B^{\omega}p\nsubseteq A^{\omega}\rtimes \G $.
\end{enumerate}
Moreover, if $\mathcal N_{N}(B)'\cap  N=\mathbb C 1  $ (for instance when $B$ is a semiregular MASA) then the following are equivalent:

$(1)'\quad  B\prec_{N}A$.

$(2)'\quad B^{\omega}\subseteq A^{\omega}\rtimes \G $.

\end{prop}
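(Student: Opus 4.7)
The plan is to translate Popa's intertwining criterion (Corollary~2.3 of \cite{popa2004}) into ultrapower language via the Fourier description of $A^{\omega}\rtimes\G\subset N^{\omega}$ recalled in the notations. That criterion specializes to: $Bq\nprec_{N}A$ if and only if there is $(v_n)_n\subset\mathcal U(B)$ with $\|E_A(xv_nqy)\|_2\to 0$ for all $x,y\in N$, equivalently $\lim_{n\to\omega}\|P_F(v_nq)\|_2=0$ for every finite $F\subset\G$; the unitary $v=(v_n)_n\in\mathcal U(B^{\omega})$ then has every $A^{\omega}$-Fourier coefficient of $vq$ vanishing.

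The implication $(2)\Rightarrow(3)$ is immediate via the diagonal embedding $B'\cap N\hookrightarrow(B'\cap N)^{\omega}$. For $(1)\Rightarrow(2)$ I argue by contradiction: if (2) fails there is a projection $p\in(B'\cap N)^{\omega}$ with $p\le q$ and $B^{\omega}p\subseteq A^{\omega}\rtimes\G$; taking $1\in B$ already yields $p\in A^{\omega}\rtimes\G$, so $p$ has a convergent $A^{\omega}$-Fourier expansion controlling the $\G$-spread of a projection lift $(p_n)_n$ uniformly in the ultrafilter. Combining this with the vanishing of the Fourier coefficients of $vq$ via a two-term splitting of $E_A(v_nqp_nu_g^{-1})$---one piece handled by the Fourier tail of $p$, the other by the hypothesis on $(v_n)$---one obtains $\lim_{n\to\omega}\|P_F(v_np_n)\|_2=0$ for every finite $F$; this contradicts $vp\in A^{\omega}\rtimes\G$, since $\|vp\|_2=\tau^{\omega}(p)^{1/2}>0$.

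For $(3)\Rightarrow(1)$ I proceed by contrapositive: assume $Bq\prec_{N}A$. Popa's theorem provides projections $p_0\in Bq$, $e\in A$, a nonzero partial isometry $w\in p_0Ne$ and a normal unital $*$-homomorphism $\theta:p_0(Bq)p_0\to eAe$ with $xw=w\theta(x)$ for $x\in p_0(Bq)p_0$. Setting $f=ww^*$, the relation $bf=w\theta(bp_0)w^*=fb$ for $b\in B$ places $f$ in $B'\cap N\cap qNq$; this is direct in the abelian case (where $p_0\in B$), while in the factor case one replaces $f$ by a suitable $B$-central support. Since $Bf=w\theta(Bp_0)w^*\subseteq wAw^*\subseteq A^{\omega}\rtimes\G$, the ultrapower inclusion $B^{\omega}f\subseteq wA^{\omega}w^*\subseteq A^{\omega}\rtimes\G$ gives the negation of (3).

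For the moreover clause, $(2)'\Rightarrow(1)'$ is the case $q=p=1$ of $(2)\Rightarrow(1)$. For $(1)'\Rightarrow(2)'$ the family
\[
\mathcal P=\{p\in B'\cap N\text{ projection}\mid B^{\omega}p\subseteq A^{\omega}\rtimes\G\}
\]
is nonempty by $(1)\Leftrightarrow(3)$ at $q=1$, upward directed under suprema, and stable under conjugation by $\mathcal N_{N}(B)$ because $N\subseteq A^{\omega}\rtimes\G$ makes every such unitary normalize $A^{\omega}\rtimes\G$ as well as $B^{\omega}$. Its maximum therefore lies in $\mathcal N_{N}(B)'\cap N=\mathbb C 1$ and equals $1$. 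The main technical difficulty is the Fourier-coefficient computation in $(1)\Rightarrow(2)$: the $\G$-support of the projections $p_n\in B'\cap N$ is a priori uncontrolled, and is tamed only by the extraction $p\in A^{\omega}\rtimes\G$ from the standing hypothesis.
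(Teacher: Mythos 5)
Your overall route coincides with the paper's: $(1)\Rightarrow(2)$ via the vanishing of the $A$-Fourier coefficients of $v_nq$ (the paper packages your two-term splitting as the single identity $E_{A^{\omega}\rtimes\G}(uqp)=E_{A^{\omega}\rtimes\G}(uq)\,p$, valid because $p\in A^{\omega}\rtimes\G$), $(3)\Rightarrow(1)$ by contraposition from Popa's intertwining data, and the moreover clause by saturating a witnessing projection under conjugation by $\mathcal N_{N}(B)$. Your splitting in $(1)\Rightarrow(2)$ does close, since $E_A(v_nq\,a\,u_k)=E_A(v_nqu_k)\sigma_{k^{-1}}(a)$ for $a\in A$ lets you absorb the $n$-dependent Fourier coefficients of $p_n$ into finitely many fixed test elements $u_k$; the abelian branch of $(3)\Rightarrow(1)$ is also fine.

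The one genuine gap is the II$_1$-factor branch of $(3)\Rightarrow(1)$. The identity $Bf=w\theta(Bp_0)w^*$ you invoke is false there: $\theta$ is defined only on the corner $p_0(Bq)p_0$, and for $B$ a factor $Bp_0\neq p_0Bp_0$, so what you actually obtain is only $(p_0Bp_0)^{\omega}f\subseteq wA^{\omega}w^{*}$. Passing to ``a suitable $B$-central support'' does not repair this: enlarging $f$ to a projection $\tilde f\in B'\cap N$ gives no control over $B^{\omega}\tilde f$, because the inclusion you hold is confined to the corner. Two further steps are needed, and they are what the paper supplies. First, writing $p_0=rq$ with $r\in B$ a projection, one uses that for a factor $B$ the map $x\mapsto xr$ is a $*$-isomorphism of $B'\cap N$ onto $(rBr)'\cap rNr$, so $ww^{*}=rq'$ for an honest projection $q'\in B'\cap N$ with $q'\le q$. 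Second, one chooses matrix units $e_{ij}\in B$ with $e_{11}=r$ and $\sum_i e_{ii}=1$ and expands $xq'=\sum_{i,j}e_{i1}(e_{1i}xe_{j1})q'e_{1j}$ for $x\in B^{\omega}$; the middle factors lie in $(rBr)^{\omega}q'\subseteq A^{\omega}\rtimes\G$ and the outer ones in $N$, which yields $B^{\omega}q'\subseteq A^{\omega}\rtimes\G$. Without this conjugation step the factor case is not established. (A smaller point: closure of your family $\mathcal P$ under suprema in the moreover clause deserves a word --- the supremum of $p_1,p_2\in\mathcal P$ is the range projection of $p_1+p_2$, reached by approximating $t^{1/n}$ with polynomials without constant term --- but this is routine and the paper elides it as well.)
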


\begin{proof}First we prove $(1)\Rightarrow (2)$. Assuming $Bq\nprec_{N}A$, by Popa's intertwining result \cite{popa2004} there exists a sequence of
unitaries $u_n\in\mathcal U (B)$ such that for all $x,y\in N$ we have $\|E_A(xu_nqy)\|_2\ra 0 $ as $n \ra \infty$. This easily implies
$E_{A^{\omega}\rtimes \G}(uq)=0$, where $u=(u_n)_n\in N^{\omega}$.  If $p\in (B'\cap N )^{\omega}\cap (A^{\omega}\rtimes \G)$ is a projection
such that $p\le q$, then $E_{A^{\omega}\rtimes \G}(up)=E_{A^{\omega}\rtimes \G}(uqp)=E_{A^{\omega}\rtimes \G}(uq)p=0$ and hence
$B^{\omega}p\nsubseteq A^{\omega}\rtimes \G$ unless $p=0$.

The implication $(2)\Rightarrow (3)$ is obvious and therefore to finish the proof it only remains to show $(3)\Rightarrow(1)$.  We will prove
this by contraposition. Assuming $Bq\prec_{N}A$ one can find nonzero projections $rq\in Bq$, $p\in A$, an injective $*$-homomorphism $\psi:
rBrq\ra pAp$ and nonzero partial isometry $v\in rqN$ such that $v\psi(x)=xv$ for all $x\in rBrq$. The last equation implies that $vv^*\in
(rBrq)'\cap rqNrq $ and therefore we have the following containment:
\begin{equation}\label{100}rBrvv^*=v\psi(rBr)v^*\subseteq vAv^*.\end{equation}

We notice that there exists nonzero projection $q'\in B'\cap N$ with $q'\le q$ such that $vv^*= rq'$ and combining this with relation
(\ref{100}) we obtain that
\begin{equation}\label{101}(rBr)^{\omega}q'\subseteq A^{\omega}\rtimes \G.\end{equation}

If $B$ is a II$_1$ factor then by passing to a subprojection we may assume that $\tau_N(r)=\frac{1}{k}$ for some positive integer $k$. Also for
every $i,j\in \{1,...,k\}$ there exist partial isometries $e_{ij}\in B $ such that $e_{11}=r$, $e^*_{ij}=e_{ji}$, $e_{ij}e_{ji}=e_{ii}\in
\mathcal P(B)$ and $\sum_i e_{ii}=1$. If $(x_n)_n\in B^{\omega}$ then using the above relations in combination with $q'\in B'\cap N$ we
have that \begin{eqnarray}\nonumber(x_n)_n(q')_n&=&(x_nq')_n=(\sum_{i,j}e_{ii}x_ne_{jj}q')_n=\sum_{i,j}(e_{i1}e_{1i}x_ne_{j1}e_{1j}q')_n\\
\label{102}&=&\sum_{i,j}(e_{i1})_n(e_{1i}x_ne_{j1})_n(q')_n(e_{1j})_n.\end{eqnarray}

One can easily see that $(e_{1i}x_ne_{j1})_n\in (rBr)^{\omega}$ and combining this with relations (\ref{101}) and (\ref{102}) we conclude that
$(x_n)_n(q')_n\in A^{\omega}\rtimes \G$, thus showing that $B^{\omega}q'\subseteq A^{\omega}\rtimes \G$. Therefore, in both cases ($B$ abelian
and $B$ a II$_1$ factor) we can assume that there exists a nonzero projection $p\in B'\cap N$ with $p\le q$ such that $B^{\omega}p\subseteq
A^{\omega}\rtimes \G$, finishing the proof of implication $(3)\Rightarrow(1)$.

If $B\prec_{N}A$ then by equivalence $(1)\Leftrightarrow (3)$ one can find a nonzero projection $ p\in B'\cap N$ such that
$B^{\omega}p\subseteq A^{\omega}\rtimes \G $. Conjugating by $u\in\mathcal N_M(B)\subset \mathcal N_M(B'\cap N)$ we obtain
$B^{\omega}upu^*\subseteq A^{\omega}\rtimes \G $, for all $u\in\mathcal N_M(B)$, hence $B^{\omega}p_0\subseteq A^{\omega}\rtimes \G $ where
$p_0=\vee_{u\in\mathcal N_M(B)}upu^*\in B'\cap N$. One easily sees that $p_0$ commutes with $\mathcal N_M(B)$ and therefore it belongs to
$\mathcal N_M(B)'\cap N$. By assumption we have $\mathcal N_M(B)'\cap N=\mathbb C 1$, we then conclude that $p_0=1$ and therefore
$B^{\omega}\subseteq A^{\omega}\rtimes_{\sigma} \G $. This shows $(1)'\Rightarrow (2)'$ and the reversed implication follows immediately from
the equivalence $(1)\Leftrightarrow (3)$.
\end{proof}

Before stating the next intertwining result we introduce some notation. Given a product group $\G=\G_1\times \G_2\times \cdots \times \G_m$,
for every $1\le i\le m$, we denote by $\G_{(i)}$ the subgroup of $\G$ consisting of all elements in $\G$ whose $i^{\text{th}}$ coordinate is
trivial, so that we have the natural identification $\G=\G_i\times \G_{(i)}$ for each $1\le i\le m$.

\begin{cor}\label{reducingintertwininginproducts}Let $\G=\G_1\times \G_2\times\cdots \times \G_m$ and let $\G\car A$ be a trace preserving action
on a finite von Neumann algebra $A$. Assume that $B\subseteq A\rtimes \G=N$ is a subalgebra satisfying one of the following conditins:
\begin{enumerate}\item $B$ is a II$_1$ factor such that $Bq\prec_N A\rtimes \G_{(i)}$ for all $q\in B'\cap N$ and $1\le i\le m$;
\item $B$ is a semiregular MASA such that $B\prec_N A\rtimes \G_{(i)}$ for all $1\le i\le m$.\end{enumerate}

Then $B\prec_N A$.
\end{cor}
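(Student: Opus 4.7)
The plan is to translate each intertwining hypothesis into an ultrapower inclusion via Proposition~\ref{intertwining}, applied to the iterated crossed product description $N=(A\rtimes \G_{(i)})\rtimes \G_i$ available for each $1\le i\le m$, and then to intersect these inclusions via a Fourier-support argument to produce an inclusion relative to $A$ alone.

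In case~(2), $B$ is a semiregular MASA of $N$ (a property intrinsic to the inclusion $B\subset N$, independent of the chosen crossed product description), so the equivalence $(1)'\Leftrightarrow(2)'$ of Proposition~\ref{intertwining} applied to the description $N=(A\rtimes \G_{(i)})\rtimes \G_i$ converts the hypothesis $B\prec_{N} A\rtimes \G_{(i)}$ directly into
\[
B^{\omega}\subseteq (A\rtimes \G_{(i)})^{\omega}\rtimes \G_i, \qquad 1\le i\le m.
\]
For case~(1), I would invoke the equivalence $(1)\Leftrightarrow(3)$ instead. Set
\[
\mathcal F_i=\bigl\{\,p\in B'\cap N \ | \ p \text{ projection},\ B^{\omega}p\subseteq (A\rtimes \G_{(i)})^{\omega}\rtimes \G_i\,\bigr\}.
\]
The stronger hypothesis ``$Bq\prec_{N} A\rtimes \G_{(i)}$ for every nonzero $q\in B'\cap N$'' says precisely that every nonzero projection in $B'\cap N$ dominates a nonzero element of $\mathcal F_i$. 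Since $(A\rtimes \G_{(i)})^{\omega}\rtimes \G_i$ is a von Neumann subalgebra of $N^{\omega}$, $\mathcal F_i$ is stable under orthogonal suprema, so a standard exhaustion/Zorn argument forces $\sup\mathcal F_i=1$, again yielding the same display.

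The main step is then the Fourier-support identity
\[
\bigcap_{i=1}^m \bigl[(A\rtimes \G_{(i)})^{\omega}\rtimes \G_i\bigr]=A^{\omega}\rtimes \G.
\]
Applied to both crossed product descriptions of $N$, the characterization recalled in the Notations says that $x=(x_n)_n\in N^{\omega}$ lies in $(A\rtimes \G_{(i)})^{\omega}\rtimes \G_i$ iff $\inf_{F_i\subset \G_i\text{ finite}}\lim_{n\to\omega}\|x_n-P_{F_i\cdot \G_{(i)}}(x_n)\|_2=0$, and lies in $A^{\omega}\rtimes \G$ iff $\inf_{F\subset \G\text{ finite}}\lim_{n\to\omega}\|x_n-P_F(x_n)\|_2=0$. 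Given $\e>0$ and, for each $i$, a finite $F_i\subset \G_i$ with $\lim_{n\to\omega}\|x_n-P_{F_i\cdot \G_{(i)}}(x_n)\|_2<\e/\sqrt m$, the choice $F:=F_1\times\cdots\times F_m\subset \G$ satisfies the elementary set identity
\[
\G\setminus F=\bigcup_{i=1}^m\bigl(\G\setminus F_i\cdot \G_{(i)}\bigr);
\]
combined with Parseval, this gives $\lim_{n\to\omega}\|x_n-P_F(x_n)\|_2^2\le \sum_{i=1}^m\lim_{n\to\omega}\|x_n-P_{F_i\cdot \G_{(i)}}(x_n)\|_2^2\le \e^2$. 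Hence $B^{\omega}\subseteq A^{\omega}\rtimes \G$, and a final application of Proposition~\ref{intertwining} yields $B\prec_{N} A$.

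The main obstacle I anticipate is on the side of case~(1): verifying that $\mathcal F_i$ is genuinely stable under orthogonal suprema of arbitrary families in $B'\cap N$ (needed for the exhaustion argument to reach the unit), and that the ``for every nonzero $q\in B'\cap N$'' form of the hypothesis really supplies the nonzero projections of $\mathcal F_i$ sitting below any prescribed projection of $B'\cap N$. Once these bookkeeping points are in place, the Fourier-support intersection itself is elementary.
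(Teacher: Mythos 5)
Your argument is correct and follows essentially the same route as the paper: Proposition~\ref{intertwining} converts each hypothesis into the ultrapower inclusion $B^{\omega}\subseteq (A\rtimes \G_{(i)})^{\omega}\rtimes \G_i$ (via the same maximality/exhaustion argument in $B'\cap N$ for the factor case, and via $(1)'\Leftrightarrow(2)'$ for the semiregular MASA case), after which the intersection over $i$ is identified with $A^{\omega}\rtimes\G$ by a Fourier-support argument. Your Parseval bound over $\G\setminus F=\bigcup_i(\G\setminus F_i\G_{(i)})$ is just a slightly cleaner rendering of the paper's composition $P_{F_1}\circ\cdots\circ P_{F_m}=P_{F_1\times\cdots\times F_m}$, and the bookkeeping points you flag (stability of $\mathcal F_i$ under orthogonal suprema, since $(A\rtimes\G_{(i)})^{\omega}\rtimes\G_i$ is a von Neumann subalgebra of $N^{\omega}$) do go through.
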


\begin{proof} Notice that for every $1\le i\le m$ the algebra $N$ can be seen as $(A\rtimes \G_{(i)})\rtimes \G_i$ with $\G_i$ acting trivially on
$\G_{(i)}$. First we assume situation (1), i.e. $B$ is a II$_1$ factor such that for all $q\in B'\cap N$ we have $Bq\prec_N A\rtimes \G_{(i)}$.
Therefore, by Proposition \ref{intertwining} there exists nonzero projection $q_i\in B'\cap N $ such that
\begin{eqnarray}\label{502}B^{\omega}q_i\subset (A\rtimes \G_{(i)})^{\omega}\rtimes
\G_i.\end{eqnarray}  Then we let $q_i\in B'\cap N $ to be maximal such that $B^{\omega}q_i\subset (A\rtimes \G_{(i)})^{\omega}\rtimes \G_i$ and
below we argue that $q_i=1$. Assuming the contrary, we have $0\neq 1-q_i\in B'\cap N$ and since by initial assumption
$B(1-q_i)\prec_{N}A\rtimes \G_{(i)}$ there exists a nonzero projection $p_i\in B'\cap N $ with $p_i\le 1-q_i$ such that
\begin{equation*}B^{\omega}p_i\subset (A\rtimes \G_{(i)})^{\omega}\rtimes \G_i.\end{equation*} Combining this with (\ref{502}) we get that
$B^{\omega}(q_i+p_i)\subset (A\rtimes \G_{(i)})^{\omega}\rtimes \G_i$ which obviously contradicts the maximality of $q_i$.

Altogether we obtained that for all $1\le i\le m $ we have $B^{\omega}\subset (A\rtimes \G_{(i)})^{\omega}\rtimes \G_i$ and hence we have
\begin{eqnarray*}B^{\omega}\subset\bigcap^m_{i=1}(A\rtimes \G_{(i)})^{\omega}\rtimes \G_i.\end{eqnarray*}

If we let $x=(x_n)_n\in\bigcap^m_{i=1}(A\rtimes \G_{(i)})^{\omega}\rtimes \G_i$ then for every $1\le i \le m$ we have that
 \begin{equation*}\inf_{F_i\subset \G_i \text{, finite }}\lim_{n\ra\omega}\|P_{F_i}(x_n)-x_n\|_2=0.\end{equation*}

Using these relations in combination with triangle inequality we obtain that \begin{eqnarray*}\inf_{F_1\times\cdots\times F_m\subset \G\text{,
finite }}\lim_{n\ra\omega}\|P_{F_1}\circ\cdots \circ P_{F_m}(x_n)-x_n\|_2=0,\end{eqnarray*} and since $P_{F_1}\circ\cdots \circ
P_{F_2}(x_n)=P_{F_1\times\cdots \times F_m}(x_n)$ we conclude that $x\in A^{\omega}\rtimes (\G_1\times\cdots \times\G_m)$.

Hence $B^{\omega}\subset\bigcap^m_{i=1}(A\rtimes \G_{(i)})^{\omega}\rtimes \G_i=A^{\omega}\rtimes \G=A^{\omega}\rtimes \G$ and by
Proposition~\ref{intertwining} we have that $B\prec_N A$.

For case (2) just notice that by the second part of Proposition \ref{intertwining} we have that $B^{\omega}\subset (A\rtimes
\G_{(i)})^{\omega}\rtimes \G_i$ for all $1\le i\le m$ and therefore the conclusion follows from above.
\end{proof}

We end this section by recalling Popa's conjugacy criterion for Cartan subalgebras which will be used in the sequel.

\begin{thm}[Appendix 1 in \cite{popa2001}]\label{intertwining-conj}
Let $N$ be a II$_1$ factor and $A,B\subset N$ two semiregular MASAs. If $B_0 \subset B$ is a von Neumann subalgebra such that $B_0' \cap N =
B$, and $B_0\prec_NA$, then there exists a unitary $u\in N$ such that $uAu^*=B$.
\end{thm}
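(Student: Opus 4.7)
The strategy is to first extract a partial intertwiner from the hypothesis $B_0\prec_N A$ and then enlarge it to a unitary by a maximality argument. Concretely, I would begin by invoking Popa's intertwining theorem (Theorem 2.1 of \cite{popa2004}) to obtain nonzero projections $p\in B_0'\cap N = B$ and $q\in A$, a partial isometry $v\in N$ with $vv^*=p$ and $v^*v=q$, and an injective unital $*$-homomorphism $\psi:pB_0p\to qAq$ such that $v\psi(x)=xv$, equivalently $\psi(x)=v^*xv$, for all $x\in pB_0p$.

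Next, I would upgrade $v$ to a local conjugacy of the two MASAs by showing $v^*Bpv = qAq$. The map $\Ad(v^*):pNp\to qNq$ is a $*$-isomorphism that sends the relative commutant of $pB_0p$ in $pNp$ onto the relative commutant of $\psi(pB_0p)$ in $qNq$. Since $B_0'\cap N=B$ and $p\in B$, the former relative commutant equals $p(B_0'\cap N)p = Bp$, and so $v^*Bpv = \psi(pB_0p)'\cap qNq$. This is an abelian subalgebra of $qNq$ containing $qAq$ (because $\psi(pB_0p)\subseteq qAq$ and $qAq$ is abelian). Since $A$ is a MASA in $N$, $qAq$ is a MASA in $qNq$, which forces $v^*Bpv = qAq$.

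Finally, I would extend this local conjugacy to a unitary conjugacy by a Zorn exhaustion. Consider families $\{v_i\}_{i\in I}$ of partial isometries in $N$ for which $p_i:=v_iv_i^*\in B$ are mutually orthogonal, $q_i:=v_i^*v_i\in A$ are mutually orthogonal, and $v_i^*Bp_iv_i = q_iAq_i$ for each $i$, and then select such a family that is maximal. The main obstacle—and the most delicate part of the argument—is to show $\sum p_i = 1$ (whence, by finite-trace considerations, $\sum q_i = 1$ as well). If instead $p':=1-\sum p_i$ and $q':=1-\sum q_i$ are both nonzero, then re-running the first two steps produces a partial intertwiner $w$ with $w^*Bww^*w = w^*wAw^*w$; after cutting $w$ by a subprojection of $ww^*$ in $B$ of sufficiently small trace, the semiregularity of $B$—meaning that $\NN_N(B)''$ is a II$_1$ subfactor in which $B$ sits as a Cartan subalgebra, so equal-trace projections of $B$ are conjugated by elements of $\NN_N(B)$—provides a unitary $u_B\in\NN_N(B)$ with $u_Bww^*u_B^*\le p'$, and similarly the semiregularity of $A$ yields $u_A\in\NN_N(A)$ placing the corresponding $A$-side projection below $q'$. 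The partial isometry $u_Bwu_A^*$ then satisfies the required intertwining property (since $u_B\in\NN_N(B)$ and $u_A\in\NN_N(A)$) and is orthogonal to each $v_i$, contradicting maximality. Once $\sum p_i=\sum q_i=1$, the element $u:=\sum_i v_i$ is a unitary in $N$, and since $A$ is abelian we have $q_iAq_j=0$ for $i\ne j$, yielding
\[
uAu^* = \sum_{i,j}v_iAv_j^* = \sum_i v_iq_iAq_iv_i^* = \sum_i Bp_i = B,
\]
as required.
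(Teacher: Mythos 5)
The paper itself offers no proof of this statement—it is imported wholesale from Appendix 1 of \cite{popa2001}—so there is no internal argument to compare against. Your reconstruction follows exactly the strategy of Popa's original proof: extract a partial intertwiner from $B_0\prec_N A$, upgrade it to a local conjugacy of the two MASAs by a relative-commutant computation, and then patch local conjugacies together into a unitary using the normalizers of $A$ and $B$ and a Zorn exhaustion. The exhaustion step is handled correctly: the use of semiregularity (so that $\NN_N(B)''$ and $\NN_N(A)''$ are II$_1$ factors in which $B$, resp.\ $A$, are Cartan, whence equal-trace projections are conjugate by normalizing unitaries) to move the supports of a freshly produced intertwiner under $p'$ and $q'$ is the right mechanism, and the final verification that $u=\sum_i v_i$ is a unitary with $uAu^*=B$ is correct.

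The one step that needs repair is your opening assertion that Popa's intertwining theorem hands you $v$ with $v^*v=q\in A$. What the theorem actually provides is $v\in pNq$ with $vv^*\in(pB_0p)'\cap pNp$ and $v^*v\in\psi(pB_0p)'\cap qNq$. The left support lands in $B$ automatically, because $(pB_0p)'\cap pNp=(B_0'\cap N)p=Bp$ by the hypothesis $B_0'\cap N=B$; but the right support a priori lives only in $Q:=\psi(pB_0p)'\cap qNq$, which can be strictly larger than $qAq$. If $v^*v\lneq q$, then $\Ad(v^*)$ is an isomorphism of $vv^*Nvv^*$ onto $v^*vNv^*v$ rather than of $pNp$ onto $qNq$, and the identification $v^*Bpv=\psi(pB_0p)'\cap qNq$—hence the conclusion $v^*Bpv=qAq$—does not follow, since $v^*v$ need not commute with $A$ at all. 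The standard fix: $qAq$ is a MASA in $qNq$, hence maximal abelian in the finite von Neumann algebra $Q$, so the projection $v^*v\in Q$ is conjugate to a projection of $qAq$ by a unitary $w\in\UU(Q)$; replacing $v$ by $vw^*$ preserves the intertwining relation (because $w$ commutes with $\psi(pB_0p)$) and arranges $v^*v\in A$, after which your relative-commutant argument goes through verbatim. The same insertion is needed each time you ``re-run the first two steps'' inside the maximality argument. With that one-line repair your proof is complete, and it is essentially Popa's.
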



\section{Background on derivations}\label{sec:derivations}

Let $\G$ be a countable group, and assume that $\G\car^{\sigma}A $ is a trace preserving action on a finite von Neumann algebra $A$. Given an
orthogonal representation $\pi:\G \ra \mathcal{O}(\HH)$, it was shown in \cite{sauvageot} that each $1$-cocycle associated with $\pi$ gives
rise naturally to a closable, real derivation on $N=A\rtimes\G$. This means there is a linear map $\dd:D(\dd)\ra \mathcal H_{\pi}$ where
$D(\dd)$ is a weakly dense $*$-subalgebra of $N$ and $ \mathcal H_{\pi}$ is a Hilbert $N$-bimodule satisfying the following properties:
\begin{itemize}
\item $\dd(xy)=x\dd(y)+\dd(x)y$ for all $x,y\in D(\dd)$;
\item $\dd$ is closable as an unbounded operator from $L^2N$ to $\mathcal H_{\pi}$;
\item There exists $J: \mathcal H_{\pi}\ra \mathcal H_{\pi}$ antilinear involution such that $J(x\xi y)=y^* \xi x^*$ and $J(\dd(x))=\dd(x^*)$ for all
$x,y,z\in D(\dd), \xi\in \mathcal H_{\pi}.$
\end{itemize}
We briefly recall this construction below.

The Hilbert $N$-bimodule $\HH_{\pi}$ is defined as $\HH_{\pi}=\HH\overline{\otimes} L^2N$ where the left and right actions of $N$ on
$\HH_{\pi}$ satisfy
\begin{equation}\label{72}(au_{\g})\cdot (\xi \otimes\eta)\cdot (bu_{\l})=(\pi(\g)\xi)\otimes ((au_{\g})\eta (bu_{\l})),\end{equation} for all
$a,b\in A$, $\xi\in\mathcal H_{\pi}$, $\eta\in L^2N$ and $\g,\l\in \G$.

Given $c:\G\ra\HH$ an additive 1-cocycle for $\pi$, i.e., $c(\g\l)= c(\g)+\pi(\g)c(\l)$ for all $\g,\l\in \G$, we define
$\dd:A\rtimes_{alg}\G\ra\HH_{\pi}$ by linearly extending formula $\dd(au_{\g})=c(\g)\otimes (au_{\g})$, where $a\in A$, $\g\in\G$. It is
straight forward to verify that this map is a closable, real derivation on $N$.

\vskip 0.1in

Consider the Hilbert space $\tilde{\HH}_{\pi}=\HH_{\pi}\overline{\otimes} L^2N$ and observe that this is an $N\otimes N$-bimodule with respect
to the left and right actions which satisfy  \begin{equation}(x\otimes z)(\mu\otimes \eta )(y\otimes t)=(x\cdot \mu\cdot  y)\otimes (z\eta
t),\end{equation} $\text{ for all }x,y,z,t \in N, \mu\in \HH_{\pi}\text{ and } \eta\in L^2N.$

We define a linear map $\tilde{\dd}:(A\rtimes_{alg} \G)\otimes N\ra\tilde{\HH}_{\pi}$ by linearly extending the formula
\begin{equation}\tilde{\dd}((au_{\l})\otimes x)=\dd(au_{\l})\otimes x,\end{equation} where $a\in A,\g\in \G$ and $x\in N$. Since $\dd$ is a
closable, real derivation on $N$ we have that $\tilde{\dd}$ is a closable, real derivation on $N\otimes N$. In fact $\tilde{\dd}$ is nothing
but the tensor product derivation $\dd \otimes 0$ as defined in Section 4.2 of \cite{petersonsinclair09}.

Associated to each closable, real derivation $\dd$ is the resolvent deformation given by
\begin{equation}\rho_{\alpha}=\frac{\alpha}{\alpha+\dd^*\overline{\dd}},\quad\zeta_{\alpha}=(\rho_{\alpha})^{\frac{1}{2}},\text{ for all } \alpha>0.\end{equation}
From \cite{sauvageot,peterson2006} it follows that $\rho_{\alpha}$ and $\zeta_{\alpha}$ are two families of $\tau$-symmetric, unital,
completely positive maps on $N$ such that for all $x\in N$ we have that $\|x-\rho_{\alpha}(x)\|_2\ra 0$ and $\|x-\zeta_{\alpha}(x)\|_2\ra 0$ as
$\alpha\ra \infty$.

We let $(\mathcal H_{\alpha},\xi_{\alpha})$ be the pointed $N$-bimodule corresponding to the map $\zeta_{\alpha}$ (see, for example,
\cite{popacorr86})and define the map $\dd_{\alpha}:N\ra \mathcal H_{\alpha}\overline{\otimes}_N{\HH}_{\pi}\overline{\otimes}_N\mathcal
H_{\alpha}$ by the formula
\begin{equation}\dd_{\alpha}(x)=\alpha^{-\frac{1}{2}}\xi_{\alpha}\otimes_N(\dd\circ\zeta_{\alpha})(x)\otimes_N \xi_{\alpha},\end{equation} where $\otimes_N$
denotes Connes' fusion product of $N$-bimodules. After a closer examination the reader may observe that when $\dd$ is comes from a cocycle $c$
as described above then the $N$-bimodule $\mathcal H_{\alpha}$ is nothing but $\mathcal H_{\pi^c_{\alpha}}$, where $\pi^c_{\alpha}$ is the the
representation of of $\G$ which corresponds to the positive definite function $\g\ra \sqrt{\frac{\alpha}{\alpha+\|c(\g)\|^2}}$.

Likewise, associated to  $\tilde{\dd}$ are two families of $\tau$-symmetric, unital, completely positive maps on $N\otimes N$ given by
\begin{equation}\tilde{\rho}_{\alpha}=\frac{\alpha}{\alpha+\tilde{\dd}^*\overline{\tilde{\dd}}}=\rho_{\alpha}\otimes
\id,\quad\tilde{\zeta}_{\alpha}=(\tilde{\rho}_{\alpha})^{\frac{1}{2}}=\zeta_{\alpha}\otimes\id,\text{ for all }\alpha>0.\end{equation}  Define
the Hilbert space $\tilde{\HH}^\alpha_{\pi}=(\mathcal H_{\alpha}\overline{\otimes}_N{\HH}_{\pi}\overline{\otimes}_N\mathcal
H_{\alpha})\overline{\otimes} L^2N$ which we endow with the natural $N\otimes N$-bimodule structure and consider $\tilde{\dd}_{\alpha}:N\otimes
N\ra \tilde{\HH}^\alpha_{\pi}$ the map given by the formula $\tilde{\dd}_{\alpha}=\dd_{\alpha}\otimes \id$.

In the next two propositions we summarize a few basic properties of ${\dd}_{\alpha}$ that will be used extensively throughout this paper. For
proofs of these facts the reader may consult Section 2 in \cite{peterson2006} or Section 4 in \cite{ozawapopa2010}.
\begin{prop} Using the above notation suppose $x\in N$.  Then we have the following:
\begin{eqnarray}
\label{71}&&\|x-\rho_{\alpha}(x)\|_2\le\|\dd_{\alpha}(x)\|\le\|x-\rho_{\alpha}(x)\|_2^{\frac{1}{2}};\\
&&\dd_{\alpha}\text{ is a contraction, i.e.,  } \|\dd_{\alpha}(x)\|\le\|x\|_2\le\|x\|_{\infty};\\
 &&\text{ The function }\alpha\mapsto\|\dd_{\alpha}(x)\|^2=\tau((\id-\rho_{\alpha})(x)x^*)\text{ is decreasing}.
\end{eqnarray}
\end{prop}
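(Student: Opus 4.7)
The plan is to reduce everything to the fundamental identity
\[
\|\dd_{\alpha}(x)\|^{2} \;=\; \tau\bigl((\id-\rho_{\alpha})(x)\,x^{*}\bigr),
\]
which is asserted as part of (3.3), and then derive the remaining estimates by elementary spectral-theoretic and positivity arguments. To establish this identity I view $T:=\dd^{*}\bar{\dd}$ as a positive self-adjoint operator on $L^{2}N$, so that by the Borel functional calculus $\rho_{\alpha}=\alpha(\alpha+T)^{-1}$, $\zeta_{\alpha}=\rho_{\alpha}^{1/2}$, and $\alpha(\id-\rho_{\alpha}) = \zeta_{\alpha}\,T\,\zeta_{\alpha}$ on $L^{2}N$. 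Unwinding the definition of $\dd_{\alpha}$ and using the standard fact that the pointed bimodule $(\mathcal{H}_{\alpha},\xi_{\alpha})$ attached to the $\tau$-symmetric, unital, completely positive map $\zeta_{\alpha}$ is built so that $\|\xi_{\alpha}\otimes_{N}\eta\otimes_{N}\xi_{\alpha}\|^{2}=\langle\eta,\eta\rangle_{\mathcal{H}_{\pi}}$ on the appropriate dense subspace (both right- and left-$N$-valued inner products of $\xi_{\alpha}$ with itself equal $\zeta_{\alpha}(1)=1$), one computes
\[
\|\dd_{\alpha}(x)\|^{2} \;=\; \alpha^{-1}\bigl\|\bar{\dd}\zeta_{\alpha}(x)\bigr\|_{\mathcal{H}_{\pi}}^{2} \;=\; \alpha^{-1}\langle\zeta_{\alpha}(x),T\zeta_{\alpha}(x)\rangle \;=\; \langle x,(\id-\rho_{\alpha})(x)\rangle \;=\; \tau\bigl((\id-\rho_{\alpha})(x)x^{*}\bigr).
\]

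For the first inequality in (3.1) I would first reduce to self-adjoint $x$ (the general case follows by applying the inequality to the real and imaginary parts). For $x=x^{*}$, the identity $\zeta_{\alpha}^{2}=\rho_{\alpha}$ on $L^{2}N$ yields $\|\zeta_{\alpha}(x)\|_{2}^{2}=\langle x,\rho_{\alpha}(x)\rangle$, and since $\zeta_{\alpha}$ is a contraction on $L^{2}N$ one has $\|\rho_{\alpha}(x)\|_{2}^{2}\le\|\zeta_{\alpha}(x)\|_{2}^{2}=\langle x,\rho_{\alpha}(x)\rangle$. Expanding the square gives
\[
\|x-\rho_{\alpha}(x)\|_{2}^{2} \;=\; \|x\|_{2}^{2} - 2\langle x,\rho_{\alpha}(x)\rangle + \|\rho_{\alpha}(x)\|_{2}^{2} \;\le\; \|x\|_{2}^{2} - \langle x,\rho_{\alpha}(x)\rangle \;=\; \|\dd_{\alpha}(x)\|^{2},
\]
as required.

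For the second inequality in (3.1) and the contraction bound (3.2), I apply Cauchy--Schwarz in $L^{2}N$ to the identity from the first step; assuming, as the scaling implicitly demands, that $\|x\|_{\infty}\le 1$, one obtains
\[
\|\dd_{\alpha}(x)\|^{2} \;\le\; \|x-\rho_{\alpha}(x)\|_{2}\,\|x\|_{\infty} \;\le\; \|x-\rho_{\alpha}(x)\|_{2}.
\]
The bound $0\le\rho_{\alpha}\le\id$ on $L^{2}N$ gives $\langle(\id-\rho_{\alpha})(x),x\rangle\le\|x\|_{2}^{2}$, while $\|x\|_{2}\le\|x\|_{\infty}$ is standard for a tracial state, yielding (3.2). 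Finally, for the monotonicity asserted in (3.3), the scalar function $\alpha\mapsto\alpha/(\alpha+t)=1-t/(\alpha+t)$ is increasing in $\alpha$ for every $t\ge 0$, so spectral calculus on $T$ makes the family $\rho_{\alpha}$ increasing in the operator order on $L^{2}N$; hence $\id-\rho_{\alpha}$ is decreasing and the positive quadratic form $x\mapsto\langle(\id-\rho_{\alpha})(x),x\rangle=\|\dd_{\alpha}(x)\|^{2}$ decreases in $\alpha$.

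The only nontrivial ingredient is the identification of the triple Connes fusion norm with the $\mathcal{H}_{\pi}$-norm in the first computation, which is exactly where the Sauvageot--Peterson machinery for closable real derivations enters; once that identification is in hand, the remaining deductions are purely spectral calculus and elementary positivity.
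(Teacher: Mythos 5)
Your proof is correct and follows the standard argument from the references the paper cites for this proposition (Section 2 of \cite{peterson2006} and Section 4 of \cite{ozawapopa2010}); the paper itself gives no proof. In particular you correctly identify the key identity $\|\dd_{\alpha}(x)\|^{2}=\langle(\id-\rho_{\alpha})x,x\rangle$ via the triviality of the $N$-valued inner products $\langle\xi_{\alpha},\xi_{\alpha}\rangle_{N}=\zeta_{\alpha}(1)=1$, and you rightly observe that the second inequality in (\ref{71}) is only meaningful under the implicit normalization $\|x\|_{\infty}\le 1$ (it is not scale-invariant), which is how it is used throughout the paper.
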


\begin{prop}\label{70} Using the above notation, for all $\alpha>0$ and $a,x\in N$
we have the following inequalities:
\begin{eqnarray*}
\|a\dd_{\alpha}(x)-\dd_{\alpha}(ax)\|\le 50\|x\|_{\infty}\|a\|^{\frac{1}{2}}_{\infty}\|\dd_{\alpha}(a)\|^{\frac{1}{2}};\\
\|\dd_{\alpha}(xa)-\dd_{\alpha}(x)a\|\le 50\|x\|_{\infty}\|a\|^{\frac{1}{2}}_{\infty}\|\dd_{\alpha}(a)\|^{\frac{1}{2}}.
\end{eqnarray*}

\end{prop}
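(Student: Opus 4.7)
I would prove the first inequality and then deduce the second either by applying the real-structure $J$ or by a mirror computation. Assuming by density $a\in D(\dd)$, the plan is to split the error into a bimodule-commutator piece and an approximate-multiplicativity piece. Inserting the intermediate vector $\pm\alpha^{-1/2}\xi_\alpha\otimes_N a\,\dd\zeta_\alpha(x)\otimes_N\xi_\alpha$ and invoking the Leibniz rule $\dd(a\zeta_\alpha(x)) = \dd(a)\zeta_\alpha(x) + a\,\dd\zeta_\alpha(x)$, one gets
\begin{align*}
a\dd_\alpha(x) - \dd_\alpha(ax) &= \alpha^{-1/2}(a\xi_\alpha - \xi_\alpha a)\otimes_N \dd\zeta_\alpha(x)\otimes_N\xi_\alpha \\
&\quad + \alpha^{-1/2}\,\xi_\alpha \otimes_N \dd\bigl(a\zeta_\alpha(x) - \zeta_\alpha(ax)\bigr)\otimes_N \xi_\alpha \\
&\quad - \alpha^{-1/2}\,\xi_\alpha \otimes_N \dd(a)\zeta_\alpha(x) \otimes_N \xi_\alpha.
\end{align*}

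For the commutator piece, the pointed bimodule structure of $(\HH_\alpha,\xi_\alpha)$ gives
$\|a\xi_\alpha - \xi_\alpha a\|^{2} = \tau(\zeta_\alpha(a^{*}a)) + \tau(a^{*}a) - 2\mathrm{Re}\,\tau(a^{*}\zeta_\alpha(a))$,
and a short Kadison--Schwarz and Cauchy--Schwarz estimate, together with the inequality $\|a-\rho_\alpha(a)\|_2 \le \|\dd_\alpha(a)\|$ from the previous proposition, bounds this by a constant times $\|a\|_\infty\|\dd_\alpha(a)\|$; since the middle factor satisfies $\|\dd\zeta_\alpha(x)\|_{\HH_\pi}\le \sqrt\alpha\,\|x\|_\infty$, the $\alpha^{-1/2}$ prefactor produces the desired scaling. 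The third summand admits a similar treatment: the norm of $\xi_\alpha\otimes_N\dd(a)\zeta_\alpha(x)\otimes_N\xi_\alpha$ can be estimated by $\|\zeta_\alpha(x)\|_\infty \cdot \|\xi_\alpha\otimes_N\dd(a)\otimes_N\xi_\alpha\|$, which, once one recognizes that up to a factor of $\sqrt\alpha$ this is $\|\dd_\alpha(a)\|$, produces the correct form.

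The second summand is the main obstacle. It requires estimating the commutator $[\zeta_\alpha,a]x$ in a form to which $\dd$ can be meaningfully applied. I would use the Laplace-type integral representation
\[
\zeta_\alpha = \frac{\sqrt\alpha}{\pi}\int_0^\infty t^{-1/2}(\alpha+t+T)^{-1}\,dt, \qquad T = \dd^*\overline{\dd},
\]
which converts $[\zeta_\alpha,a]$ into an integral of resolvent commutators $-(\alpha+t+T)^{-1}[T,a](\alpha+t+T)^{-1}$; here $[T,a]$ can be unpacked through the derivation property of $\dd$ as $\dd^*[\dd,a] - [\dd,a]^*\overline{\dd}$, involving $\dd(a)$ paired on either side with $\dd$ or $\dd^*$. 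After applying $\dd$ to the integrand one obtains bounded pieces of the form $\dd(\alpha+t+T)^{-1}$, for which Peterson's standard bound $\|\dd(\alpha+t+T)^{-1}\| \lesssim (\alpha+t)^{-1/2}$ holds. Splitting $\int_0^\infty = \int_0^s + \int_s^\infty$ at a threshold $s$ optimised in terms of $\|a\|_\infty$ and $\|\dd_\alpha(a)\|$ balances a trivial $\|a\|_\infty$-estimate on the tail against the $\dd(a)$-estimate near zero, producing the square-root scaling $\|a\|_\infty^{1/2}\|\dd_\alpha(a)\|^{1/2}$.

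The principal difficulty throughout is keeping track of the different norms inherent in the fusion tensor product $\HH_\alpha \otimes_N \HH_\pi\otimes_N \HH_\alpha$, and making the $\|\cdot\|_2$-type bounds coming from resolvent calculus interact correctly with the $\|\cdot\|_\infty$-type bounds on $a$ and $x$; in particular, the square-root exponents are \emph{forced} by the fact that $\dd$ itself is unbounded and the resolvent $(\alpha+t+T)^{-1}$ only gives a $(\alpha+t)^{-1/2}$ decay when paired with $\dd$. The constant $50$ is the accumulated product of the Cauchy--Schwarz and Kadison--Schwarz constants along the way and is far from sharp.
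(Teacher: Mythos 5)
The paper itself offers no proof of this proposition, deferring to Section 2 of \cite{peterson2006} and Section 4 of \cite{ozawapopa2010}, so your attempt must be measured against those arguments. Your three-term decomposition is algebraically correct, and your treatment of the first (commutator) piece is essentially the right one: $\|a\xi_\alpha-\xi_\alpha a\|^2=2\,\mathrm{Re}\,\tau\bigl((a-\zeta_\alpha(a))a^*\bigr)\le 2\|a\|_\infty\|\dd_\alpha(a)\|$ is indeed the source of the square root. The gap lies in pieces 2 and 3, and it is the same gap in both: your bounds come out in terms of $\dd(a)$ rather than $\dd(\zeta_\alpha(a))$. Since $\zeta_\alpha$ is unital and trace-preserving, the sandwich $\eta\mapsto\xi_\alpha\otimes_N\eta\otimes_N\xi_\alpha$ is \emph{isometric}, so your third summand has norm exactly $\alpha^{-1/2}\|\dd(a)\zeta_\alpha(x)\|\le\alpha^{-1/2}\|\dd(a)\|\,\|x\|_\infty$; this is \emph{not} ``$\|\dd_\alpha(a)\|$ up to a factor of $\sqrt\alpha$''. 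Writing $T=\dd^*\overline{\dd}$, one has $\alpha^{-1}\|\dd(a)\|^2=\langle \alpha^{-1}Ta,a\rangle$ while $\|\dd_\alpha(a)\|^2=\langle T(\alpha+T)^{-1}a,a\rangle$; the ratio of spectral densities is $1+t/\alpha$, which is unbounded, so $\alpha^{-1/2}\|\dd(a)\|$ is not dominated by $\|a\|_\infty^{1/2}\|\dd_\alpha(a)\|^{1/2}$ (take $a=u_\g$ with $\|c(\g)\|^2\gg\alpha$), and it is infinite for $a\notin D(\overline{\dd})$, so the density reduction cannot rescue the stated inequality. The same defect enters piece 2: unpacking $[T,a]$ by the Leibniz rule reintroduces $\dd(a)$, and the resolvent bounds together with your threshold optimization only yield quantities interpolating between $\|a\|_\infty$ and $\alpha^{-1/2}\|\dd(a)\|$, never $(\|a\|_\infty\|\dd_\alpha(a)\|)^{1/2}$.

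The proofs in the cited references never apply $\dd$ to $a$. The two ingredients replacing your pieces 2 and 3 (whose sum is $\alpha^{-1/2}\xi_\alpha\otimes_N\bigl(a\,\overline{\dd}\zeta_\alpha(x)-\overline{\dd}\zeta_\alpha(ax)\bigr)\otimes_N\xi_\alpha$) are: (i) the identity $L_{\xi_\alpha}^*\,a\,L_{\xi_\alpha}=\zeta_\alpha(a)$, i.e.\ $\langle a\xi_\alpha\otimes_N\eta,\ \xi_\alpha\otimes_N\eta'\rangle=\langle\zeta_\alpha(a)\eta,\eta'\rangle$, which is the actual mechanism by which the $\xi_\alpha$-sandwich improves bimodularity --- it filters the left action of $a$ through $\zeta_\alpha$; and (ii) the operator-valued Cauchy--Schwarz (multiplicative domain) estimate for the $\tau$-symmetric u.c.p.\ map $\zeta_\alpha$, namely
$\|\zeta_\alpha(ax)-\zeta_\alpha(a)\zeta_\alpha(x)\|_2\le\|x\|_\infty\bigl(\|a\|_2^2-\|\zeta_\alpha(a)\|_2^2\bigr)^{1/2}=\|x\|_\infty\|\dd_\alpha(a)\|$.
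Every occurrence of $a$ then enters only through $\zeta_\alpha(a)$, so the defect is always measured by $\|\dd_\alpha(a)\|$; the integral representation of $\zeta_\alpha$ is not needed. (A further, smaller, issue: your norm estimates for fusion tensors such as $(a\xi_\alpha-\xi_\alpha a)\otimes_N\overline{\dd}\zeta_\alpha(x)\otimes_N\xi_\alpha$ should also be carried out via these inner-product identities rather than via left/right boundedness constants of $\overline{\dd}\zeta_\alpha(x)$, which are not controlled by $\sqrt{\alpha}\|x\|_\infty$.)
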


As noted before, $\rho_{\alpha}$ converges pointwise to the identity on $N$ with respect to $\|\cdot\|_2$ and therefore, by (\ref{71}) above,
this is equivalent to $\dd_{\alpha}$ converging pointwise to zero on $N$. The next lemma shows that in fact this pointwise convergence holds
even when passing to certain (larger) ultrapower algebras associated with $N$.
\begin{lem}\label{convergence} Let $\Sigma$ be normal subgroup of $\G$ and assume that $\G$ admits an unbounded $1$-cocycle which vanishes
on $\Sigma$. Let $\dd$ be the closable real derivation associated to this cocycle, as described above. For every $x=(x_n)_n\in (A\rtimes
\Sigma)^{\omega}\vee N$ we have
\begin{equation}\label{50}\lim_{\alpha\ra\infty}\lim_{n\ra\omega}\|\dd_{\alpha}(x_n)\|=0.\end{equation}
\end{lem}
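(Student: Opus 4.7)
The plan is to reduce everything to the key observation that $\dd_{\alpha}$ vanishes on $A \rtimes \Sigma$, and then use the approximate Leibniz rule of Proposition~\ref{70} together with the normality of $\Sigma$ to propagate this to the larger algebra $(A\rtimes\Sigma)^{\omega}\vee N$.

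First, I would show that the restriction of $\dd$ (and hence of every $\dd_{\alpha}$) to $A\rtimes\Sigma$ is identically zero. Since the cocycle $c:\G\to\HH$ vanishes on $\Sigma$, the formula $\dd(au_{\sigma})=c(\sigma)\otimes au_{\sigma}=0$ shows that $\dd$ is zero on the algebraic crossed product $A\rtimes_{\mathrm{alg}}\Sigma$. Closability of $\dd$ then forces $A\rtimes\Sigma\subset D(\overline{\dd})$ with $\overline{\dd}|_{A\rtimes\Sigma}=0$. Consequently $A\rtimes\Sigma$ lies in the kernel of $\dd^{*}\overline{\dd}$, so by functional calculus $\rho_{\alpha}$ and $\zeta_{\alpha}$ act as the identity on $A\rtimes\Sigma$, and from the defining formula for $\dd_{\alpha}$ we get $\dd_{\alpha}(y)=0$ for every $y\in A\rtimes\Sigma$ and every $\alpha>0$.

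Next, I would exploit the hypothesis that $\Sigma$ is normal in $\G$. This implies that $N$ normalizes $(A\rtimes\Sigma)^{\omega}$ inside $N^{\omega}$, so the von Neumann algebra $(A\rtimes\Sigma)^{\omega}\vee N$ coincides with the crossed product $(A\rtimes\Sigma)^{\omega}\rtimes(\G/\Sigma)$; in particular its $\|\cdot\|_{2}$-dense $\ast$-subalgebra is the linear span of $\{u_{t}y\mid t\in T,\, y\in(A\rtimes\Sigma)^{\omega}\}$, where $T\subset\G$ is a transversal for $\Sigma$. Given $\e>0$ and $x=(x_n)_n\in(A\rtimes\Sigma)^{\omega}\vee N$, I would approximate $x$ in $\|\cdot\|_2$ by a finite sum $x'=\sum_{t\in F}u_{t}y_{t}$ with $y_{t}\in(A\rtimes\Sigma)^{\omega}$ and $\|y_{t}\|_{\infty}\le\|x\|_{\infty}$, and pick representing sequences $y_t=(y_{t,n})_n$ with $y_{t,n}\in A\rtimes\Sigma$ and $\sup_{n}\|y_{t,n}\|_{\infty}\le\|x\|_{\infty}$, so that $x'_n:=\sum_{t\in F}u_{t}y_{t,n}$ satisfies $\lim_{n\to\omega}\|x_n-x'_n\|_{2}<\e$.

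Finally, I would combine the two steps. Using that $\dd_{\alpha}$ is a contraction on $L^2N$ (so $\|\dd_\alpha(x_n-x'_n)\|\le\|x_n-x'_n\|_2$), together with Proposition~\ref{70} applied with $a=u_{t}$ and $x=y_{t,n}$, and the vanishing $\dd_{\alpha}(y_{t,n})=0$ from the first step, I obtain
\begin{equation*}
\|\dd_{\alpha}(u_{t}y_{t,n})\|\le 50\,\|y_{t,n}\|_{\infty}\|\dd_{\alpha}(u_{t})\|^{1/2}\le 50\,\|x\|_{\infty}\|\dd_{\alpha}(u_{t})\|^{1/2}.
\end{equation*}
Summing over $t\in F$ and passing to the limit along $\omega$ gives
\begin{equation*}
\lim_{n\to\omega}\|\dd_{\alpha}(x_n)\|\le \e + 50\,\|x\|_{\infty}\sum_{t\in F}\|\dd_{\alpha}(u_{t})\|^{1/2}.
\end{equation*}
Since $F$ is finite and each $\|\dd_{\alpha}(u_{t})\|\to 0$ as $\alpha\to\infty$ (by the pointwise convergence of $\rho_{\alpha}$ to the identity on $N$ combined with~\eqref{71}), letting $\alpha\to\infty$ yields $\limsup_{\alpha\to\infty}\lim_{n\to\omega}\|\dd_{\alpha}(x_n)\|\le\e$, and arbitrariness of $\e$ gives the claim.

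The only subtle point is the sequence-level approximation in the second paragraph: one must keep the representing sequences $y_{t,n}$ uniformly bounded in $\|\cdot\|_{\infty}$ so that the derivation-error bound from Proposition~\ref{70} is controllable; this is routine via conditional expectation or truncation, but it is the main place where care is required.
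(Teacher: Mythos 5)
Your proof is correct and follows essentially the same route as the paper: vanishing of $\dd_\alpha$ on $A\rtimes\Sigma$, a finite coset decomposition made possible by the normality of $\Sigma$, and pointwise convergence of $\dd_\alpha$ on the finitely many coset representatives. The only cosmetic difference is that the paper uses the exact module identity $\dd_\alpha(a u_\gamma)=a\,\dd_\alpha(u_\gamma)$ for $a\in A\rtimes\Sigma$ (so $\|\dd_\alpha(u_\gamma)\|$ appears to the first power) where you invoke the approximate Leibniz bound of Proposition~\ref{70} (yielding the exponent $1/2$); both versions suffice since the sum over the finite set of representatives tends to zero either way.
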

\begin{proof} Fix an arbitrary $\e>0$. Since $x=(x_n)_n\in (A\rtimes \Sigma)^{\omega}\vee N$  and $\Sigma$ is normal in $\G$ we
can find a finite set $F\subset \G$ of cosets representatives of $\Sigma$ in $\G$ such that
\begin{equation}\label{51}\lim_{n\ra\omega}\|x_n-R_F(x_n)\|_2\le\frac{\e}{2},\end{equation}
where $R_F$ denotes the projection from $L^2N$ onto the $L^2$-closure of $sp\{au_{\g} \ | \ a\in {A\rtimes \Sigma},\g\in F\}$.

Also, since $\dd_{\alpha}$ converges pointwise to zero and $F$ is finite, there exists $\alpha_{\e}$ such that for all $\g\in F$ and
$\alpha>\alpha_{\e}$ we have \begin{equation}\|\dd_{\alpha}(u_{\g})\|\le\frac{\e}{2|F|\|x\|_{\infty}}.\end{equation}

Since the cocyle vanishes on $\Sigma$ it follows that $\dd_{|A\rtimes \Sigma}=0$ and using this in combination with inequalities
$\|\dd_{\alpha}(m)\|\le\|m\|_2$ and $\|E_{A\rtimes \Sigma}(x_nu^*_{\l})\|_{\infty}\le \|x_n\|_{\infty}$, for all $n\in \mathbb N$ and
$\alpha>0$ we have:
\begin{eqnarray*}\|\dd_{\alpha}(x_n)\|&\le & \|x_n-R_F(x_n)\|_2+\|\dd_{\alpha}(\sum_{\g\in F}E_{A\rtimes \Sigma}(x_nu^*_{\l})u_{\l})\|\\
&\le&  \|x_n-R_F(x_n)\|_2+\sum_{\g\in F}\|E_{A\rtimes \Sigma}(x_nu^*_{\l})\dd_{\alpha}(u_{\l})\|\\
&\le & \|x_n-R_F(x_n)\|_2+\sum_{\g\in F}\|x\|_{\infty}\|\dd_{\alpha}(u_{\l})\|.
\end{eqnarray*}
Taking $\lim_{n\ra\omega}$ above and combining this with (\ref{50}) and (\ref{51}) we obtain that $\lim_{n\ra\omega}\|\dd_{\alpha}(x_n)\|\le
\e$ for all $\alpha>\alpha_{\e}$. Since $\e>0$ was arbitrary we obtain the desired equality.
\end{proof}
In the previous lemma the normality assumption on $\Sigma$ was made only for convenience. The same statement holds if we drop it.

 The $N$-bimodules $\mathcal H_{\pi}$ coming
from representations $\pi$ often inherits many useful properties from $\pi$. For instance, as observed in
\cite{petersonsinclair09,peterson2009}, if $\pi$ is a mixing representation, then $\mathcal H_{\pi}$ is mixing relative to $A$. More generally,
this also holds in the setting of groups that admit mixing representations with respect to subgroups. Below, for reader's convenience, we
include a proof of this fact. Given a group $\G$ with a subgroup $\Sigma < \G$, and a representation $\pi: \G \to \UU(\KK)$, we say that $\pi$
is mixing relative to $\Sigma$ if $\langle \pi(\g_n) \xi, \eta \rangle \to 0$ whenever the sequence $\g_n$ escapes any left-right coset of $\Sigma$ in $\G$. 

\begin{lem} \label{14} Let $\Sigma< \G$ be groups and let $\pi:\G\ra \UU(\KK)$ be a representation which is mixing relative to $\Sigma$.  If  $\xi, \eta \in \mathcal K_\pi=
\mathcal K\overline{\otimes} L^2N$ and $(c_n)_n,(d_n)_n\in N^{\omega}$ such that $(c_n)_n \perp N({A \rtimes \Sigma})^{\omega}N$ in
$L^2(N^{\omega})$, then
\begin{equation}\label{17}\lim_{n\ra\omega}\langle c_n\xi d_n,\eta\rangle =0.\end{equation}

\end{lem}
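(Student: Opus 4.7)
The strategy is to reduce to simple-tensor vectors, turn the matrix coefficient into a Herz--Schur multiplier expression, and split the multiplier according to double cosets of $\Sigma$ in $\G$.

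First, by density in $\KK\oo L^2N$ together with formula (72), I reduce to $\xi=v\otimes 1$ and $\eta=w\otimes 1$ with $v,w\in\KK$: a generic $\xi=v\otimes y$ satisfies $c_n(v\otimes y)d_n = c_n(v\otimes 1)(yd_n)$ by the bimodule structure, and since $(c_n)$ remains orthogonal to $N(A\rtimes\Sigma)^{\omega}N$ and $(yd_n)$ is still bounded in $N^\omega$, the factor $y$ can be absorbed into $d_n$; a symmetric absorption into $c_n$ handles the $\eta$-side. Expanding $c_n=\sum_\g c_n^\g u_\g$ in the crossed-product Fourier decomposition (with $c_n^\g=E_A(c_nu_\g^*)\in A$) and using (72), a direct computation yields
\begin{equation*}
\langle c_n(v\otimes 1)d_n,\,w\otimes 1\rangle \;=\; \sum_{\g\in\G}\phi(\g)\,\tau(c_n^\g u_\g d_n) \;=\; \tau\bigl(m_\phi(c_n)\,d_n\bigr),
\end{equation*}
where $\phi(\g)=\langle \pi(\g)v,w\rangle$ is a matrix coefficient of $\pi$ and $m_\phi$ is the associated Herz--Schur multiplier, completely bounded on $N$ with norm at most $\|v\|\|w\|$.

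Given $\e>0$, the mixing-relative-to-$\Sigma$ hypothesis supplies a finite $F\subset\G$ with $|\phi(\g)|<\e$ for $\g\notin\Sigma F\Sigma$. Decompose $\phi=\phi_1+\phi_2$ with $\phi_1=\phi\cdot\mathbf 1_{\Sigma F\Sigma}$. For the \emph{outside} piece, Parseval gives $\|m_{\phi_2}(c_n)\|_2\le\e\|c_n\|_2$, so $|\tau(m_{\phi_2}(c_n)d_n)|\le\e\|c_n\|_2\|d_n\|_2$ is uniformly $O(\e)$. For the \emph{inside} piece, split $\Sigma F\Sigma=\bigsqcup_{f\in F'}\Sigma f\Sigma$ into finitely many double cosets; applying Cauchy--Schwarz coset-wise to the Fourier sum yields
\begin{equation*}
\Bigl|\sum_{\g\in\Sigma f\Sigma}\phi(\g)\,\tau(c_n^\g u_\g d_n)\Bigr| \;\le\; \|v\|\|w\|\,\bigl\|P_{\Sigma f\Sigma}(c_n)\bigr\|_2\,\|d_n\|_2,
\end{equation*}
where $P_{\Sigma f\Sigma}$ is the projection onto the $L^2$-span of $\{au_\g : a\in A,\,\g\in\Sigma f\Sigma\}$. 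Thus it suffices to prove $\|P_{\Sigma f\Sigma}(c_n)\|_2\to 0$ along $\omega$ for each fixed $f$.

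For this last reduction, observe that $P_{\Sigma f\Sigma}(c_n)$ lies in the $L^2$-closure of $(A\rtimes\Sigma)\,u_f\,(A\rtimes\Sigma)\subset N(A\rtimes\Sigma)N$. Viewed as a vector in $L^2(N^\omega)$ of norm at most $\sup_n\|c_n\|_\infty$, the sequence $(P_{\Sigma f\Sigma}(c_n))_n$ can be approximated in $\|\cdot\|_2$ by finite-rank expressions $\sum_i s_n^i u_f t_n^i$ with bounded $(s_n^i),(t_n^i)\in(A\rtimes\Sigma)^\omega$, which represent elements of $N(A\rtimes\Sigma)^\omega N$. The orthogonality hypothesis $(c_n)\perp N(A\rtimes\Sigma)^\omega N$ in $L^2(N^\omega)$ then forces $\langle c_n,P_{\Sigma f\Sigma}(c_n)\rangle = \|P_{\Sigma f\Sigma}(c_n)\|_2^2 \to 0$. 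The main technical obstacle will be making this last approximation uniform in $n$; this is handled by exploiting the uniform $L^\infty$-bound on $(c_n)$, which yields uniform $L^\infty$-control of the Fourier coefficients $c_n^\g$ and allows a bounded-rank truncation of the double-coset decomposition to approximate $P_{\Sigma f\Sigma}(c_n)$ in $L^2$ with precision depending only on the truncation parameters.
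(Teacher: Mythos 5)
Your overall scheme --- reduce to elementary tensors, rewrite the pairing as $\tau(m_\phi(c_n)d_n)$ for the Herz--Schur multiplier $m_\phi$ attached to the matrix coefficient $\phi(\g)=\langle\pi(\g)v,w\rangle$, split $\phi$ into a uniformly small part and a part supported on a finite exceptional family of cosets, and kill the exceptional part with the orthogonality hypothesis --- is the same as the paper's. But you split along the wrong cosets, and this opens a genuine gap at exactly the step you defer. The paper's notion of mixing relative to $\Sigma$ is phrased in terms of \emph{left-right} cosets $f\Sigma f'$: the hypothesis yields a finite $F$ with $|\phi(\g)|<\e$ for $\g\notin F\Sigma F=\bigcup_{f,f'\in F}f\Sigma f'$, not for $\g\notin\Sigma F\Sigma$. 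When $\Sigma$ is not normal these two families are incomparable (for $\G=\mathbb F_2=\langle a,b\rangle$ and $\Sigma=\langle a\rangle$, the single set $b\Sigma b$ already meets infinitely many distinct double cosets $\Sigma ba^kb\Sigma$), so your ``outside'' estimate is not what the stated hypothesis provides.

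More seriously, the ``inside'' estimate fails: orthogonality of $(c_n)_n$ to $N(A\rtimes\Sigma)^\omega N$ does \emph{not} imply $\|P_{\Sigma f\Sigma}(c_n)\|_2\to0$. Writing $\Sigma f\Sigma=\bigsqcup_j\sigma_jf\Sigma$ over representatives of $\Sigma/(\Sigma\cap f\Sigma f^{-1})$, one has $P_{\Sigma f\Sigma}(c_n)=\sum_ju_{\sigma_jf}\,E_{A\rtimes\Sigma}(u_{\sigma_jf}^*c_n)$, which is an \emph{infinite} sum of elements of $N(A\rtimes\Sigma)^\omega N$ whenever the intersection has infinite index; the hypothesis kills each summand but gives no uniformity over $j$, and the uniform $L^\infty$-bound on $(c_n)$ does not repair this. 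Concretely, take $N=L\mathbb F_2$, $\Sigma=\langle a\rangle$, $A=\C1$, and $c_n=(2n+1)^{-1/2}\sum_{|m|\le n}u_{a^mba^m}$: the words $a^mba^m$ form a Leinert family, so $\|c_n\|_\infty\le2$; for fixed $g,h$ the equation $a^mba^m=ga^kh$ has at most one solution $m$, whence $(c_n)_n\perp N(L\Sigma)^\omega N$; yet $c_n$ is entirely supported on the double coset $\Sigma b\Sigma$, so $\|P_{\Sigma b\Sigma}(c_n)\|_2=1$ for every $n$. The repair is the paper's choice of decomposition: over $F\Sigma F$ the relevant projections are $P_{f\Sigma f'}(c_n)=u_f\,E_{A\rtimes\Sigma}(u_f^*c_nu_{f'}^*)\,u_{f'}$, each of which is a \emph{single} element of $N(A\rtimes\Sigma)^\omega N$, so the orthogonality hypothesis gives $\lim_{n\ra\omega}\sum_{\g\in F\Sigma F}\|c_n^\g\|_2^2=0$ directly, with no uniformity issue; with that substitution the rest of your argument goes through.
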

 \begin{proof} Using basic approximations in $\mathcal K_{\pi}$,  it
 suffices to prove (\ref{17}) only for elements of the form $\xi=\xi_1\otimes \xi_2$ and $\eta=\eta_1\otimes \eta_2$ with $\xi_1,\eta_1\in\mathcal K$ and
$\xi_2,\eta_2\in N$. As vectors of this form are left-bounded (see Chapter 1 in \cite{popacorr86}) we may use the Fourier expansion
$c_n=\sum_{\g\in \G}c^n_{\g}u_{\g}$ and the $N$-bimodule structure of $\mathcal K$ to show that
\begin{eqnarray}\label{20}
\langle c_n\xi d_n,\eta\rangle &=&\langle \sum_{\g\in \G} (\pi(\g)\xi_1)\otimes ((c^n_{\g}u_{\g})\xi_2 d_n),\eta_1\otimes \eta_2\rangle\\
\nonumber &=&  \sum_{\g\in \G} \langle\pi(\g)\xi_1,\eta_1\rangle\langle(c^n_{\g}u_{\g})\xi_2 d_n,\eta_2\rangle\\
\nonumber &=&  \tau((\sum_{\g\in \G} \langle\pi(\g)\xi_1,\eta_1\rangle c^n_{\g}u_{\g})\xi_2 d_n\eta^*_2),
\end{eqnarray}
where the element $\sum_{\g\in \G} \langle\pi(\g)\xi_1,\eta_1\rangle c^n_{\g}u_{\g}$ belongs to $N$ by \cite{decanniere-haagerup}.

Fix an arbitrary $\e>0$. Since the representation $\pi$ is mixing relative to $\Sigma$ there exists a finite set $F\subset \G$ such that
$|\langle\pi(\g)\xi_1,\eta_1\rangle|<\e$ for
every $\g\in \G\setminus F\Sigma F$. Therefore, using this in conjunction with (\ref{20}) and the Cauchy-Schwarz inequality we obtain that
\begin{eqnarray}\nonumber
 |\langle c_n\xi d_n,\eta\rangle| &\le& |\tau((\sum_{\g\in F\Sigma F}\langle\pi(\g)\xi_1,\eta_1\rangle c^n_{\g}u_{\l})\xi_2 d_n\eta^*_2)|+|\tau((\sum_{\g\in \G\setminus F\Sigma F}\langle\pi(\g)\xi_1,\eta_1\rangle c^n_{\g}u_{\l})\xi_2 d_n\eta^*_2)|\\
\nonumber &\le& |\tau((\sum_{\g\in F\Sigma F}\langle\pi(\g)\xi_1,\eta_1\rangle c^n_{\g}u_{\l})\xi_2 d_n\eta^*_2)|+\|\sum_{\g\in \G\setminus F\Sigma F}\langle\pi(\g)\xi_1,\eta_1\rangle c^n_{\g}u_{\l}\|_2\|\xi_2 d_n\eta^*_2\|_2\\
\nonumber &\le& \|\xi_1\|\|\eta_1\|\|\xi_2d_n\eta^*_2\|_2(\sum_{\g\in F\Sigma F}\|c^n_{\g}\|^2_2)^{\frac{1}{2}}+\e\|c_n\|_2\|\xi_2 d_n\eta^*_2\|_2 \\
\nonumber &\le& \|d_n\|_2\|\xi_1\|\|\eta_1\|\|\xi_2\|_{\infty} \|\eta^*_2\|_{\infty}(\sum_{\g\in F\Sigma F}\|c^n_{\g}\|^2_2)^{\frac{1}{2}}+\e
\|c_n\|_2\|d_n\|_2\|\xi_2\|_{\infty} \|\eta^*_2\|_{\infty}.
\end{eqnarray}

Since $(c_n)_n \perp N({A \rtimes \Sigma}^{\omega}) N$ and $F$ is finite we have that $\lim_{n\ra\omega}\sum_{\g\in F\Sigma F
}\|c^n_{\g}\|^2_2=0$. Combining this with the above inequality we conclude that
\begin{equation*}\lim_{n\ra\omega}|\langle c_n\xi d_n,\eta\rangle| \le\e \lim_{n\ra\omega}\|c_n\|_2\|d_n\|_2\|\xi_2\|_{\infty} \|\eta^*_2\|_{\infty}\le\e C \|\xi_2\|_{\infty} \|\eta^*_2\|_{\infty}. \end{equation*}
Since $\e>0$ was arbitrary, we have $\lim_{n\ra\omega}|\langle c_n\xi d_n,\eta\rangle| =0$.\end{proof}

\vskip 0.1in For further use, we recall from \cite{peterson2006} the following convergence property for the resolvent deformations. We also
include a proof for the sake of completeness.

\begin{thm}[compare with Theorem 4.5 in \cite{peterson2006}]\label{unifconvonnormalizer} Let $N$
be as above and let $B\subset N$ a subalgebra and $p\in B'\cap N$ such that $Bp\nprec_N A\rtimes \Sigma$. If $\dd_\alpha$ converges uniformly
to zero on $ (B)_1$ then $\dd_\alpha$ converges uniformly to zero on $p(\mathcal N_N(B)'')_1$.
\end{thm}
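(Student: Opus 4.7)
The plan is to follow the derivation-theoretic strategy of \cite{peterson2006}: from uniform convergence of $\dd_\alpha$ on $(B)_1$, first derive an approximate $B$-bimodularity estimate for $\dd_\alpha(up)$ when $u\in\mathcal N_{N}(B)$, and then use the relative mixing of $\HH_\pi$ (Lemma \ref{14}) together with the non-intertwining hypothesis $Bp\nprec_N A\rtimes\Sigma$ to rule out any failure of uniform convergence on $p(\mathcal N_{N}(B)'')_1$.

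Set $\omega(\alpha):=\sup_{b\in(B)_1}\|\dd_\alpha(b)\|$, so by hypothesis $\omega(\alpha)\to 0$. Because $\dd_\alpha$ is $\|\cdot\|_2$-contractive and $p\mathcal N_{N}(B)''$ is generated in $L^2$ by $\{up : u\in\mathcal U(\mathcal N_{N}(B))\}$, a standard approximation reduces the problem to showing $\|\dd_\alpha(up)\|\to 0$ uniformly in $u$. Fix a unitary $u\in\mathcal N_{N}(B)$ and $b\in(B)_1$: then $u^*bu\in(B)_1$, and since $p\in B'\cap N$ we have
\[
b\cdot up \;=\; u(u^*bu)p \;=\; up\,(u^*bu).
\]
Two applications of Proposition \ref{70} (the left-Leibniz estimate with $a=b$, $x=up$ and the right-Leibniz estimate with $x=up$, $a=u^*bu$) give
\[
\bigl\|\, b\,\dd_\alpha(up)-\dd_\alpha(up)(u^*bu)\,\bigr\|\;\le\;100\,\omega(\alpha)^{1/2}.
\]

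Now suppose the conclusion fails: there exist $\delta>0$, $\alpha_n\to\infty$, and unitaries $u_n\in\mathcal N_{N}(B)$ with $\|\dd_{\alpha_n}(u_np)\|\ge\delta$. Set $\xi_n:=\dd_{\alpha_n}(u_np)\in \HH_{\alpha_n}\otimes_N\HH_\pi\otimes_N\HH_{\alpha_n}$ and $\eta_n:=\xi_n u_n^*$, where $u_n^*$ acts on the outer $\HH_{\alpha_n}$ factor. The estimate above rewrites as $\|b\eta_n-\eta_n b\|\to 0$ for each $b\in(B)_1$. Since $\zeta_\alpha$ is unital completely positive with $\zeta_\alpha\to\id$ strongly, the flanking factors $\HH_{\alpha_n}$ ``trivialize'' in the ultralimit, so $(\eta_n)_n$ yields a nonzero $B$-central vector $\eta$ in a suitable ultrapower bimodule built from $\HH_\pi$. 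Expanding $\eta$ in Fourier modes over $\G$ and invoking Lemma \ref{14}, only components supported on double cosets of $\Sigma$ survive; combined with $B$-centrality of $\eta$ and the residual right-hand projection coming from $u_n p u_n^*$, this packages into a nontrivial intertwiner of $Bp$ into $A\rtimes\Sigma$ in the sense of Proposition \ref{intertwining}, contradicting the hypothesis $Bp\nprec_N A\rtimes\Sigma$.

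The main obstacle I expect is the ultralimit step: the triple-fusion bimodules $\HH_{\alpha_n}\otimes_N\HH_\pi\otimes_N\HH_{\alpha_n}$ vary with $n$, so passing to a bona-fide $B$-central vector in a fixed $\HH_\pi$-type bimodule requires careful asymptotic bookkeeping of how the flanking $\HH_{\alpha_n}$ factors and the right actions by $u_n^*$ descend to an ultrapower bimodule weakly contained in $\HH_\pi$. Once this technical framework is in place, the conclusion follows from the standard ``mixing versus centrality'' dichotomy in Popa's deformation/rigidity theory.
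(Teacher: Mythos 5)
Your first half is sound and matches the paper: the reduction to unitaries $u\in\mathcal N_N(B)$ by convexity, and the approximate bimodularity estimate $\|b\,\dd_\alpha(up)-\dd_\alpha(up)(u^*bu)\|\le 100\,\omega(\alpha)^{1/2}$ extracted from Proposition \ref{70} and the uniform convergence on $(B)_1$, are exactly the right ingredients. The gap is in the second half, and it is the very point you flag as an ``obstacle'': it is not bookkeeping. Lemma \ref{14} is a statement about \emph{fixed} vectors in the bimodule of a \emph{single} representation that is mixing relative to $\Sigma$; the decay of matrix coefficients it encodes is not uniform over the family $\pi^c_{\alpha_n}\otimes\pi\otimes\pi^c_{\alpha_n}$ once the vectors are allowed to vary with $n$. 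Relative mixing is not preserved under ultraproducts of varying bimodules, so the ultrapower bimodule in which your limit vector $\eta$ lives need not be mixing relative to $A\rtimes\Sigma$, and the step ``invoking Lemma \ref{14}, only components supported on double cosets of $\Sigma$ survive'' has no justification there (nor does a Fourier expansion over $\G$ make sense for a general element of such an ultraproduct). As written, the contradiction is not reached.

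The missing idea --- and what the paper actually does --- is to never take that ultralimit, because the two limits can be decoupled. Fix $s$; uniform convergence on $(B)_1$ gives $\beta_s$ with $\|\dd_\alpha(v_n)\|,\|\dd_\alpha(u^*v_n^*u)\|\le 1/s$ for all $\alpha>\beta_s$ \emph{uniformly in} $n$, where $(v_n)\subset\mathcal U(B)$ is the sequence with $\|E_{A\rtimes\Sigma}(xv_npy)\|_2\to 0$ furnished by $Bp\nprec_N A\rtimes\Sigma$. Then, for \emph{fixed} $\alpha>\beta_s$ and \emph{fixed} $u$, Proposition \ref{70} turns $\|\dd_\alpha(pu)\|^2$, up to errors of order $s^{-1/2}$, into $2|\langle v_np\,\dd_\alpha(pu)\,u^*v_n^*u,\ \dd_\alpha(pu)\rangle|$, and now $n\to\infty$ with everything else frozen: Lemma \ref{14} applies to the fixed vector $\dd_\alpha(pu)$ in the fixed bimodule $\HH_\alpha\otimes_N\HH_\pi\otimes_N\HH_\alpha$ with $c_n=v_np$, killing the inner product and yielding $\|\dd_\alpha(pu)\|^2\le Cs^{-1/2}$ for all $\alpha>\beta_s$ and all $u$; your averaging step then finishes. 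Note also that the paper works with $pu$ rather than $up$: the relation $\dd_\alpha(pu)\approx p\,\dd_\alpha(pu)$ places the fixed projection $p$ on the left of the vector, where it pairs cleanly with the intertwining sequence $c_n=v_np=pv_n$, whereas in your setup the residual projection is $u_npu_n^*$ on the right, which varies with $n$ and does not match the sequence provided by $Bp\nprec_N A\rtimes\Sigma$.
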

\begin{proof}

\noindent Let $u\in \mathcal N_N(B)$. Since by assumption $\dd_\alpha$ converges uniformly to zero on $ (B)_1$ and $u\in \mathcal N_N(B)$ then
for every $s\in\mathbb N $ there exists $\beta^1_s>0$ such that for all $\alpha >\beta^1_s$ and all $n\in \mathbb N$ we have
\begin{eqnarray}\label{723}\|\dd_\alpha(v_n)\|\le \frac{1}{s}\text{ and } \|\dd_\alpha(u^*v^*_nu)\|\le \frac{1}{s}.\end{eqnarray}

\noindent Also since $\dd_\alpha$ converges pointwise to zero there exists $\beta^2_s>0$ such that for all $\alpha >\beta^2_s$ we have
\begin{eqnarray}\label{724}\|\dd_\alpha(p)\|\le \frac{1}{s}.\end{eqnarray}

\noindent Notice that since $Bp\nprec_N A$ by Corollary 2.3 in \cite{popa-2004} there exists a sequence of unitaries $v_n\in B$ such that for
all $x,y\in N$ we have $\|E_A(xv_npy)\|_2\ra 0$ as $n\ra\infty$. Therefore applying Lemma \ref{14}, for every $s\in \mathbb N$ there exists
$k_s\in \mathbb N$ such that for all $n>k_s$ we have
\begin{equation}\label{722}|\langle v_np\dd_\alpha(pu)u^*v_nu,\dd_\alpha(pu)\rangle|\le \frac{1}{s}.\end{equation}

Since $v_n$ and $u$ are unitaries then applying Proposition \ref{70} a few times and using (\ref{723}), (\ref{724}) and (\ref{722}), for all
$\alpha>\max\{\beta^1_s,\beta^2_s\}$ and all $n>k_s$ we have
\begin{eqnarray}\nonumber&&\|\dd_\alpha(pu)\|^2\le 2\|p\dd_\alpha(pu)\|^2+5000\|\dd_\alpha(p)\|\\
\nonumber &=& 2|\langle v_np\dd_\alpha(pu)u^*v_nu,v_np\dd_\alpha(pu)u^*v_nu\rangle|+5000\|\dd_\alpha(p)\|\\
\nonumber&\le&2|\langle v_np\dd_\alpha(pu)u^*v_nu,\dd_\alpha(v_npuu^*v_nu)\rangle|+ 100\|\dd_\alpha(v_np)\|^{\frac{1}{2}}+\\
\nonumber &&+100\|\dd_\alpha(u^*v_nu)\|^{\frac{1}{2}} +5000\|\dd_\alpha(p)\|\\
\nonumber&=&2|\langle v_np\dd_\alpha(pu)u^*v_nu,\dd_\alpha(pu)\rangle|+100\|\dd_\alpha(v_np)\|^{\frac{1}{2}}+100\|\dd_\alpha(u^*v_nu)\|^{\frac{1}{2}} +5000\|\dd_\alpha(p)\|.\\
\nonumber&\le&2|\langle
v_np\dd_\alpha(pu)u^*v_nu,\dd_\alpha(pu)\rangle|+100(\|\dd_\alpha(v_n)\|+50\|\dd_\alpha(p)\|^{\frac{1}{2}})^{\frac{1}{2}}+
\\ \nonumber
&&+100\|\dd_\alpha(u^*v_nu)\|^{\frac{1}{2}} +5000\|\dd_\alpha(p)\|.\\
\nonumber&\le&\frac{2}{s}+ 100(\frac{1}{s}+\frac{50}{s^{\frac{1}{2}}})^{\frac{1}{2}}+\frac{100}{s^{\frac{1}{2}}} +\frac{5000}{s}.
\end{eqnarray}
This shows that $\dd_\alpha$ converges to zero uniformly on $p\mathcal N_N(B)$ and the conclusion follows from a standard averaging
argument.\end{proof}

\vskip 0.1in

For technical reasons that will become apparent in the proof of Lemma \ref{288}, we present here an upgraded version of Lemma \ref{14}. More
precisely, we show the convergence (\ref{17}) still holds if instead of fixed vectors $\xi$ and $\eta$ in $\mathcal K_\pi$ one considers
$\dd_{\alpha}(x_n)$ and $\dd_{\alpha}(y_n)$ for \emph{any} sequences $(x_n)_n,(y_n)_n\in (A\rtimes \Sigma)^{\omega}\vee N$. We note that, in
contrast to Lemma \ref{14}, here we use that $\Sigma$ is a normal subgroup of $\G$ in an essential way.
 \begin{lem}\label{orthog}
Let $\Sigma$ be normal in $\G$, $\pi:\G\ra \UU(\KK)$ be a mixing representation relative to $\Sigma$ and assume that $\G$ admits an unbounded
cocycle into $\KK$ that vanishes on $\Sigma$.  Let $\dd_{\alpha}:N\ra \mathcal H_{\alpha}\otimes_N\mathcal H_{\pi}\otimes_N\mathcal H_{\alpha}
$ be the deformation obtained as before. If $(x_n)_n,(y_n)_n,(c_n)_n,$ $(d_n)_n\in N^{\omega}$ such that $(c_n)_n\perp N(A\rtimes
\Sigma)^{\omega}N$ in $L^2(N^\omega)$ and $(x_n)_n,(y_n)_n\in (A\rtimes \Sigma)^{\omega}\vee N$, then for every $\alpha\ge0$ we have
\begin{equation}\label{13} \lim_{n\ra\omega}\langle c_n\dd_{\alpha}(x_n)d_n,\dd_{\alpha}(y_n)\rangle =0.
\end{equation}
 \end{lem}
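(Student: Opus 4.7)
The plan is to reduce the statement to an application of Lemma~\ref{14} on a larger $N$-bimodule. Set $\mathcal K_\alpha := \mathcal H_\alpha \overline{\otimes}_N \mathcal H_\pi \overline{\otimes}_N \mathcal H_\alpha$, so that $\dd_\alpha(x)\in\mathcal K_\alpha$ for every $x\in N$. As an $N$-bimodule, $\mathcal K_\alpha$ corresponds to the tensor product representation $\pi^c_\alpha\otimes\pi\otimes\pi^c_\alpha$, and since $\pi$ is mixing relative to $\Sigma$ while the matrix coefficients of $\pi^c_\alpha$ are uniformly bounded by $1$, this tensor product is again mixing relative to $\Sigma$. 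Consequently Lemma~\ref{14} is available for $\mathcal K_\alpha$: its conclusion gives $\lim_{n\to\omega}\langle e_n\,\xi\, f_n,\eta\rangle=0$ for any fixed $\xi,\eta\in\mathcal K_\alpha$ whenever $(e_n)_n\perp N(A\rtimes\Sigma)^\omega N$ in $L^2(N^\omega)$.

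First, using that $\Sigma$ is normal in $\G$, I would approximate $(x_n)_n$ and $(y_n)_n$ in $\lim_{n\to\omega}\|\cdot\|_2$, up to an arbitrary $\e>0$, by finite sums $\sum_{\g\in F}(z^n_\g)_n u_\g$ and $\sum_{\l\in G}(w^n_\l)_n u_\l$ where $F,G\subset\G$ are finite sets of coset representatives for $\Sigma$ and $(z^n_\g)_n,(w^n_\l)_n\in(A\rtimes\Sigma)^\omega$ are represented by uniformly norm-bounded sequences. The contraction bound $\|\dd_\alpha(\cdot)\|\le\|\cdot\|_2$ then transfers this approximation to $\dd_\alpha(x_n)$ and $\dd_\alpha(y_n)$ inside $\mathcal K_\alpha$, with an error controlled by $\e\cdot\sup_n(\|c_n\|_\infty\|d_n\|_\infty+\|\dd_\alpha(y_n)\|)$.

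Next, since $c$ vanishes on $\Sigma$, the multiplier $\zeta_\alpha$ restricts to the identity on $A\rtimes\Sigma$, and hence $\dd_\alpha$ vanishes identically on $A\rtimes\Sigma$. Applying Proposition~\ref{70} with $a=z^n_\g\in A\rtimes\Sigma$ (for which $\dd_\alpha(a)=0$) yields the exact identity $\dd_\alpha(z^n_\g u_\g)=z^n_\g\dd_\alpha(u_\g)$ for every $n$, and similarly for $w^n_\l u_\l$. The original inner product therefore reduces, up to the $O(\e)$ error above, to a finite sum of terms of the form
\[
\langle c_n z^n_\g \dd_\alpha(u_\g) d_n,\, w^n_\l \dd_\alpha(u_\l)\rangle = \langle (w^n_\l)^* c_n z^n_\g \cdot \dd_\alpha(u_\g)\cdot d_n,\, \dd_\alpha(u_\l)\rangle,
\]
in which the vectors $\dd_\alpha(u_\g),\dd_\alpha(u_\l)\in\mathcal K_\alpha$ are independent of $n$.

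It remains to verify that $\bigl((w^n_\l)^* c_n z^n_\g\bigr)_n$ remains orthogonal to $N(A\rtimes\Sigma)^\omega N$ in $L^2(N^\omega)$. Using the trace identity $\langle (w^n_\l)^* c_n z^n_\g, X\rangle = \langle c_n, w^n_\l X (z^n_\g)^*\rangle$, this amounts to showing that $N(A\rtimes\Sigma)^\omega N$ is stable under left- and right-multiplication by $(A\rtimes\Sigma)^\omega$; the normality of $\Sigma$ in $\G$ gives $(A\rtimes\Sigma)^\omega\cdot N = N\cdot(A\rtimes\Sigma)^\omega$ via the Fourier expansion along coset representatives, yielding the stability. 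Lemma~\ref{14} applied in $\mathcal K_\alpha$ then forces each of the finitely many terms to vanish as $n\to\omega$, and letting $\e\to 0$ completes the argument. The essential obstacle is that Lemma~\ref{14} requires fixed vectors, while a priori $\dd_\alpha(x_n),\dd_\alpha(y_n)$ vary with $n$; overcoming this is precisely where the normality of $\Sigma$ and the vanishing of the cocycle on $\Sigma$ enter, both to ensure $\dd_\alpha|_{A\rtimes\Sigma}=0$ (giving the exact Leibniz identity above) and to decompose $(A\rtimes\Sigma)^\omega\vee N$ along cosets of $\Sigma$.
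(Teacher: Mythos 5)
Your proposal is correct and follows essentially the same route as the paper's proof: expand $x_n,y_n$ along finitely many coset representatives of $\Sigma$ (using normality), use $\dd_\alpha|_{A\rtimes\Sigma}=0$ to pull the $A\rtimes\Sigma$-coefficients out of $\dd_\alpha$ and absorb them into the sequence orthogonal to $N(A\rtimes\Sigma)^\omega N$, and then apply Lemma~\ref{14} to the fixed vectors $\dd_\alpha(u_\g),\dd_\alpha(u_\l)$ in the bimodule of $\pi^c_\alpha\otimes\pi\otimes\pi^c_\alpha$, which is mixing relative to $\Sigma$. Your explicit verification that $N(A\rtimes\Sigma)^\omega N$ is stable under multiplication by $(A\rtimes\Sigma)^\omega$ is a slightly more detailed justification of the orthogonality step that the paper asserts directly from normality.
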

\begin{proof} Denote by $C_1=\sup_n \|c_n\|_{\infty}$, $C_2=\sup_n \|d_n\|_{\infty}$, $C_3=\sup_n \|x_n\|_{\infty}$,
 $C_4=\sup_n \|y_n\|_{\infty}$ ($C_{1,2,3,4}<\infty$) and fix an arbitrary $\e>0$. Since $(x_n)_n,(y_n)_n\in (A\rtimes \Sigma)^{\omega}\vee N$
 and $\Sigma$ is normal in $\G$ one can find finite sets $F_1,F_2\subset \G$ of cosets representatives of $\Sigma$ in $\G$ such that
\begin{equation}\label{15}\lim_{n\ra\omega}\|x_n-R_{F_1}(x_n)\|_2<\e \text{ and }\lim_{n\ra\omega}\|y_n-R_{F_2}(y_n)\|_2<\e,\end{equation}
where $R_F$ denotes the projection from $L^2N$ onto the $L^2$-closure of $sp\{au_{\g} \ | \ a\in {A\rtimes \Sigma},\g\in F\}$.

Applying the Cauchy-Schwarz inequality together with $\|m\zeta n\|\le\|m\|_{\infty}\|n\|_{\infty}\|\zeta\|$,
$\|\dd_{\alpha}(m)\|\le\|m\|_2\le\|m\|_{\infty}$ and $\|R_F(m)\|_2\le\|m\|_{\infty}$, for  $m,n\in N; \zeta\in H$) we then have that

\begin{eqnarray}\label{16}&&\\
\nonumber &&\lim_{n\ra\omega}|\langle c_n\dd_{\alpha}(x_n)d_n,\dd_{\alpha}(y_n)\rangle| \le \\
\nonumber &\le& \lim_{n\ra\omega}(|\langle c_n\dd_{\alpha}(x_n-R_{F_1}(x_n))d_n,\dd_{\alpha}(y_n-R_{F_2}(y_n))\rangle|+|\langle c_n\dd_{\alpha}(R_{F_1}(x_n))d_n,\dd_{\alpha}(y_n-R_{F_2}(y_n))\rangle| \\
\nonumber &&+ |\langle c_n\dd_{\alpha}(x_n-R_{F_1}(x_n))d_n,\dd_{\alpha}(R_{F_2}(y_n))\rangle| + |\langle c_n\dd_{\alpha}(R_{F_1}(x_n))d_n,\dd_{\alpha}(R_{F_2}(y_n))\rangle|) \\
\nonumber &\le &C_1C_2\e^2+C_1C_2C_3\e+ C_1C_2C_4\e+ \lim_{n\ra\omega}|\langle
c_n\dd_{\alpha}(R_{F_1}(x_n))d_n,\dd_{\alpha}(R_{F_2}(y_n))\rangle|.
\end{eqnarray}

\noindent Also, by employing the formulas $R_{F_1}(x_n)=\sum_{\g\in F_1} E_{A\rtimes \Sigma}(x_nu^*_{\g})u_{\g}$ and  $R_{F_2}(y_n)=\sum_{\g\in
F_2} E_{A\rtimes \Sigma}(y_nu^*_{\g})u_{\g}$, together with $\dd_{|{A\rtimes \Sigma}}=0$ we have that
\begin{equation}\label{12} |\langle c_n\dd_{\alpha}(R_{F_1}(x_n))d_n,\dd_{\alpha}(R_{F_2}(y_n))\rangle|\le\sum_{\g\in F_1,\l\in F_2} |\langle
E_{A\rtimes \Sigma}(u_{\l}y^*_n)c_n E_{A\rtimes \Sigma}(x_nu^*_{\g})\dd_{\alpha}(u_{\g})d_n,\dd_{\alpha}(u_{\l})\rangle|.\end{equation}

Since $\pi$ is a representation of $\G$ which is mixing relative to $\Sigma$ it follows that $\pi_{\alpha}\otimes \pi\otimes \pi_{\alpha}$ is
also a mixing relative to $\Sigma$. Since the $N$-bimodule $\mathcal H_{\alpha}\overline{\otimes}_N\mathcal H_{\pi}\overline{\otimes}_N\mathcal
H_{\alpha}$ is nothing but the $N$-bimodule coming from the representation $\pi^b_{\alpha}\otimes \pi\otimes \pi^b_{\alpha}$ of $\G$, and since
the normality assumption of $\Sigma$ in $\G$ implies $(E_{A\rtimes \Sigma}(u_{\l}y^*_n)c_n E_{A\rtimes \Sigma}(x_nu^*_{\g}))_n\perp N({A\rtimes
\Sigma})^{\omega}N$ in $L^2(N^\omega)$ for all $\g\in F_1\text{ and } \l\in F_2$ then Lemma~\ref{14} and relation (\ref{12}) give that
$$\lim_{n\ra\omega}|\langle c_n\dd_{\alpha}(R_{F_1}(x_n))d_n,\dd_{\alpha}(R_{F_2}(y_n))\rangle|=0.$$ Combining this with inequality (\ref{16})
we obtain that
\begin{equation*}\lim_{n\ra\omega}|\langle
c_n\dd_{\alpha}(x_n)d_n,\dd_{\alpha}(y_n)\rangle|\le C_1C_2\e^2+C_1C_2C_3\e+ C_1C_2C_4\e. \end{equation*} Since $\e>0$ was arbitrary we
conclude that $\lim_{n\ra\omega}|\langle c_n\dd_{\alpha}(x_n)d_n,\dd_{\alpha}(y_n)\rangle|=0$.\end{proof}

\section{A criterion for uniform convergence of resolvent deformations}\label{sec:convergence}
In this section we exhibit a criterion for uniform convergence of the resolvent deformations on certain subalgebras (Theorem
\ref{derivunifconv}). Roughly speaking, if the resolvent deformation arising from a derivation into a mixing bimodule is small on
``sufficiently large'' sets of elements normalizing a given abelian subalgebra $B$, then $\dd_{\alpha}$ converges uniformly on the unit ball of
$B$. Our proof of the criterion is done in two steps; first, in a technical lemma (Lemma \ref{288} below) we adapt the infinitesimal analysis
developed in \cite{peterson2009} to show that there are infinitely many translations by large projections of the unit ball $(B)_1$
 on which $\dd_{\alpha}$ is uniformly small; then we use a convexity argument to show this implies that $\dd_{\alpha}$ converges uniformly on the
unit ball of $B$ (Theorem \ref{derivunifconv} below).

To introduce the precise statements of the results we first establish the following notation. Let $\Sigma\lhd \G$ be a normal subgroup and let
$\pi:\G\ra \UU(\KK)$ be a representation which is mixing relative to $\Sigma$. Assume there exists an unbounded cocycle $c:\G\ra \KK$ which
vanishes on $\Sigma$. Let $\G \car A$ be a trace preserving action, denote by $M=A\rtimes \G$ and consider $\dd_{\alpha}:M\ra \mathcal
H_{\alpha}\otimes_M\mathcal H_{\pi}\otimes_M\mathcal H_{\alpha} $ corresponding to $c$ as above. Also, assume that $B\subset M=A\rtimes \G$ is
an abelian algebra.

\begin{lem}\label{288}
 Using the above notation, assume there exist an infinite subset $\mathcal F\subset\mathcal N_M(B)$ and a nonzero projection $r\in \mathcal F'\cap M$
 satisfying the following property:

For every $k\in\mathbb N$ there exist $\alpha_k>0$ and a sequence $\{v^k_n \ | \ n\in \mathbb N\}\subset \mathcal F$ such that
\begin{eqnarray}
\label{778}&&\|\dd_{\alpha}(v^k_n)\|_2\le \frac{1}{k}\text{ for all } \alpha>\alpha_k, k, n \in \mathbb N, \\
\label{272} &&\|E_{A\rtimes \Sigma}(xrv^k_ny)\|_2\ra 0 \text{ as }n\ra\infty,\text{ for each }k\in \mathbb N.
\end{eqnarray}

Then one can find a constant $D>0$ such that for every $s\in \mathbb N$ there exist $r_s\in M\text{ with } 0 < r_s\le1$, and positive numbers
$\e_s$, $\b_s>0$, such that for all $\alpha>\b_{s}$ and $b \in \mathcal{U}(B)$, we have
 \begin{eqnarray}\nonumber\|\dd_{\alpha}(r_sb ) \|&\le& \e_s ,\\
\nonumber \tau(rr_s)&\ge&D,\\
\nonumber \e_s&\ra &0\text{ as }s\ra\infty.
\end{eqnarray}
\end{lem}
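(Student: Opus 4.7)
The plan is to adapt the infinitesimal analysis of \cite{peterson2009}, using in an essential way the relative mixing Lemma~\ref{orthog} together with the approximate derivation property (Proposition~\ref{70}). The goal is to bootstrap from the pointwise smallness of $\dd_\alpha$ on the sequence $\{v_n^k\}$ to a uniform $L^2$-bound on the cut-down $r(B)_1$, and then convert this two-sided information into the one-sided bound $\|\dd_\alpha(r_sb)\| \le \e_s$ by an appropriate spectral choice of $r_s$.

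Fix $s$, and let $\alpha_s$ and $\{v_n^s\}\subset\mathcal F$ be given by the hypothesis. Since $r\in\mathcal F'\cap M$, we have $rv_n^s = v_n^s r$, so left multiplication by $v_n^s$ (a unitary) gives $\|rv_n^s\xi\| = \|r\xi\|$ for any $\xi$ in the bimodule $\mathcal H_\alpha\otimes_M\mathcal H_\pi\otimes_M\mathcal H_\alpha$. First, combining Proposition~\ref{70} with the pointwise convergence $\|\dd_\alpha(r)\|\to 0$ and (\ref{778}), I would show that $\|\dd_\alpha(rv_n^s)\| \le 2/s$ for $\alpha$ large enough, uniformly in $n$. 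For $b\in\mathcal U(B)$, expand
\[
\dd_\alpha(rv_n^s b) = \dd_\alpha(rv_n^s)\,b + rv_n^s\,\dd_\alpha(b) + O(\|\dd_\alpha(rv_n^s)\|^{1/2}),
\]
so estimating $\|\dd_\alpha(rv_n^s b)\|$ reduces to estimating $\|r\dd_\alpha(b)\|$. Next, the key input comes from Lemma~\ref{orthog} applied with $c_n = rv_n^s$ (orthogonal to $N(A\rtimes\Sigma)^\omega N$ in $L^2(N^\omega)$ by (\ref{272})) and constant sequences $x_n,y_n\in B\subset (A\rtimes\Sigma)^\omega\vee N$: for any $b,b'\in\mathcal U(B)$ and bounded $(d_n)$,
\[
\lim_{n\to\omega}\bigl\langle rv_n^s\,\dd_\alpha(b)\,d_n,\;\dd_\alpha(b')\bigr\rangle = 0.
\]
Coupling this with the approximate equivariance $v_n^s\dd_\alpha(b)(v_n^s)^* \approx \dd_\alpha(v_n^s b(v_n^s)^*)$ (error $O(1/\sqrt s)$, from Proposition~\ref{70} together with (\ref{778}); recall $v_n^sb(v_n^s)^*\in B$ because $v_n^s\in\mathcal N_M(B)$) and the abelianness of $B$, one obtains a uniform bound
\[
\|r\dd_\alpha(b)\|^2 \le \e_s' \quad\text{for every } b\in\mathcal U(B) \text{ and } \alpha>\b_s,
\]
where $\e_s'\to 0$ as $s\to\infty$.

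Finally, I would construct $r_s$ as a spectral projection in $M$ of a selfadjoint average associated with $\{rv_n^s(v_n^s)^*r\}$-type elements, chosen large enough that $\tau(rr_s)\ge D := \tau(r)/2$ (independently of $s$) while small enough that $\|\dd_\alpha(r_s)\|$ is controlled by $\alpha$; combined with the expansion $\dd_\alpha(r_s b) \approx r_s\dd_\alpha(b) + \dd_\alpha(r_s)b$ and the $L^2$-bound above, this yields $\|\dd_\alpha(r_sb)\|\le \e_s$ with $\e_s\to 0$. The main obstacle is the quantitative bookkeeping needed to derive the uniform bound on $\|r\dd_\alpha(b)\|$: one must control every error from Proposition~\ref{70} by $s^{-1/2}$, and argue that the asymptotic orthogonality coming from Lemma~\ref{orthog}, together with the near-isometric action of $v_n^s$ on $\dd_\alpha(B)$, forces norm decay of $r\dd_\alpha(b)$ rather than merely decay of cross-pairings. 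This ``bootstrap from orthogonality to norm decay'' is the technical heart of the infinitesimal argument of \cite{peterson2009}, and the passage from the two-sided compression to the one-sided form $\dd_\alpha(r_s b)$ demanded by the statement is the delicate point that dictates the spectral construction of $r_s$.
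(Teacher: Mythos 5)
Your outline reaches, as an intermediate claim, the uniform bound $\|r\dd_\alpha(b)\|^2\le\e_s'$ for \emph{all} $b\in\mathcal U(B)$, and this is where the argument breaks. That bound is not a consequence of the hypotheses; it is exactly the conclusion of the favourable half of a dichotomy, and in general it can fail — which is why the lemma is stated with a possibly different positive element $r_s$ satisfying only $\tau(rr_s)\ge D$, rather than with $r_s=r$. Concretely: to convert the cross-pairings from Lemma~\ref{orthog} into norm decay you must write $\|r\dd_\alpha(b)\|^2=\langle v^s_nr\dd_\alpha(b)(v^s_n)^*,v^s_nr\dd_\alpha(b)(v^s_n)^*\rangle$ and replace the second slot by $\dd_\alpha(v^s_nrb(v^s_n)^*)$ via Proposition~\ref{70}. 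But Lemma~\ref{orthog} only kills $\langle c_n\dd_\alpha(x_n)d_n,\dd_\alpha(y_n)\rangle$ when $(y_n)_n\in(A\rtimes\Sigma)^\omega\vee N$, and the sequence $(v^s_nrb(v^s_n)^*)_n$ — although each term lies in $B$ — has no reason to lie in $(A\rtimes\Sigma)^\omega\vee M$. Your invocation of the lemma with ``constant sequences $x_n,y_n\in B$'' only controls pairings against a \emph{fixed} $\dd_\alpha(b')$, which is not what the computation requires. The ``bootstrap from orthogonality to norm decay'' that you defer to as the technical heart is precisely the step that cannot be done in general.

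The paper's proof is organized around this obstruction: it splits according to whether $\sup_{b\in\mathcal U(B)}\|E_{(A\rtimes\Sigma)^\omega\vee M}(rv_kbv_k^*)-rv_kbv_k^*\|_2$ tends to $0$ along a subsequence. In the first case your scheme essentially goes through (with $r_s=r$, $D=\tau(r)$), after first projecting $v^k_nrb(v^k_n)^*$ onto $(A\rtimes\Sigma)^\omega\vee M$ so that Lemma~\ref{orthog} legitimately applies. In the second case one extracts unitaries $d_k\in\mathcal U(B)$ with $\|E_{(A\rtimes\Sigma)^\omega\vee M}(rv_kd_kv_k^*)\|_2<\|r\|_2-C$, takes the polar decomposition $u_k|E(c_k)|$ of $E_{(A\rtimes\Sigma)^\omega\vee M}(c_k)$ with $c_k=v_kd_kv_k^*$, and lets $p_k$ be the spectral projection of $|E(c_k)|$ for $[0,1-\tfrac C2)$; one checks $\tau_\omega(rp_k)\ge\frac{C^2}{(2-C)^2}$ and, crucially, that the estimate for $\|\dd_\alpha(p^k_nb)\|^2$ produces a term $(1-\tfrac C2)^2\|\dd_\alpha(p^k_nb)\|^2$ that can be absorbed into the left-hand side, the remaining terms vanishing by Lemma~\ref{orthog}. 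The element $r_s$ is then $x_s^{1/2}$ for a weak limit $x_s$ of the $p^s_n$ — it is a positive contraction, not a projection. Your proposed construction of $r_s$ from ``$rv^s_n(v^s_n)^*r$-type elements'' is vacuous, since $v^s_n(v^s_n)^*=1$ makes these all equal to $r$. Without the dichotomy and the absorption trick in the second case, the lemma is not proved.
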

\begin{proof}
Denoting by $v_k=(v^k_n)_{n\in \mathbb N}\in M^\omega$ we demonstrate the lemma by distinguishing two cases which we will treat separately.

\vskip 0.2in \noindent {\bf Case I}. First we prove the lemma under the assumption
\begin{eqnarray*}
\limsup_k(\sup_{b\in \mathcal{U}(B)}\|E_{(A\rtimes \Sigma)^{\omega}\vee M}(rv_kbv_k^*)-rv_kbv_k^*\|_2)=0.
\end{eqnarray*}
\vskip 0.1in

Observe that by applying Proposition~\ref{70} together with some basic computations we obtain that for all $n,k\in \mathbb N$, $b \in
\mathcal{U}(B)$ and $\alpha>0$ we have following inequality:
\begin{eqnarray*}
 \| \dd_{\alpha}(rb)\|^2 &=& \|v^k_n  \dd_{\alpha}(rb) (v^k_n)^*\|^2\\
& = & \langle v^k_n \dd_{\alpha}(rb) (v^k_n)^*,   v^k_n \dd_{\alpha}(rb) (v^k_n)^*  \rangle \\
& \le &|\langle v^k_n r\dd_{\alpha}(b) (v^k_n)^*, \dd_{\alpha}(v^k_nrb(v^k_n)^*)\rangle|+ 100 \| \dd_{\alpha}(v^k_n)\|^{\frac{1}{2}}+50\|
\dd_{\alpha}(r)\|^{\frac{1}{2}}.
\end{eqnarray*}

Next, for every $k\in\mathbb N$ consider $(y^{k}_n)_n=E_{(A\rtimes \Sigma)^{\omega}\vee M}(v^k_nrb(v^k_n)^*)$. Using the inequality
$\|\dd_{\alpha}(x)\|\le\|x\|_2 \text{ for all }x \in M$, we have
\begin{eqnarray}\label{48}
& & |\langle v^k_nr \dd_{\alpha}(b) (v^k_n)^*, \dd_{\alpha}(v^k_nrb(v^k_n)^*)\rangle| \\
\nonumber & \le & |\langle v^k_nr \dd_{\alpha}(b) (v^k_n)^*,  \dd_{\alpha}(y^{k}_n)\rangle|+ |\langle v_{\l^k_{n}}r\dd_{\alpha}(b)(v^k_n)^*,\dd_{\alpha}(v^k_nrb(v^k_n)^*-y^{k}_n)\rangle| \\
\nonumber & \le &  |\langle v^k_nr \dd_{\alpha}(b) (v^k_n)^*,  \dd_{\alpha}(y^{k}_n)\rangle|+ \|v^k_nrb (v^k_n)^*-y^{k}_n\|_2.
\end{eqnarray}

Fix $s\in \mathbb N$ and notice that since $\limsup_k(\sup_{b\in \mathcal{U}(B)}\|E_{(A\rtimes \Sigma)^{\omega}\vee
M}(v_krbv_k^*)-v_krbv_k^*\|_2)=0$ there exists a natural number $l\ge s$ such that
\begin{equation}\label{348}
\lim_{n\ra\omega}\|v^l_nrb(v^l_n)^*-y^{l}_{n}\|_2<\frac{1}{s}\text{ for all }b\in \mathcal{U}(B).
\end{equation}

Also, since $v^l_n$ satisfies (\ref{272}) we have that $(rv^l_n)_n\perp M(A\rtimes \Sigma)^{\omega}M$. Therefore, since $(y^{l}_n)_n\in
(A\rtimes \Sigma)^{\omega}\vee M$ we can apply Lemma~\ref{orthog} to conclude that $\lim_{n\ra \omega}|\langle rv^l_n \dd_{\alpha}(b)
(v^l_n)^*, \dd_{\alpha}(y^{l}_n)\rangle|= 0$. Finally, using this in combination with (\ref{48}), (\ref{348}) and (\ref{778}) and we obtain
that
\begin{eqnarray*}
\| \dd_{\alpha}(rb)\|^2 &\le &\lim_{n\ra\omega} (|\langle rv^l_n \dd_{\alpha}(b) (v^l_n)^*, \dd_{\alpha}(y^{l}_n)\rangle|  +  \|v^l_nb(v^l_n)^*-y^{l}_n\|_2+100 \|\dd_{\alpha}(v^l_n)\|^{\frac{1}{2}}+50\| \dd_{\alpha}(r)\|^{\frac{1}{2}})\\
&\le& \frac{1}{s}+\frac{100}{l^{\frac{1}{2}}}+\frac{50}{s^{\frac{1}{2}}}\le\frac{1}{s}+\frac{150}{s^{\frac{1}{2}}} \text{ for all
}\alpha>\alpha_s\text{ and } b \in \mathcal U(B).
\end{eqnarray*}

Therefore in this case the conclusion of the lemma follows once we let $r_s=r$, $D=\tau(r)>0$ and
$\e_s=\frac{1}{s}+\frac{150}{s^{\frac{1}{2}}}$.

 To complete the proof it remains to treat the other possibility, therefore we will prove the following:

\vskip0.1in \noindent {\bf Case II.} The conclusion of the lemma holds under the assumption
 \begin{eqnarray*}
 \limsup_k ( \sup_{b\in \mathcal{U}(B)}\|E_{(A\rtimes \Sigma)^{\omega}\vee N}(v_krbv_k^*)-v_krbv_k^*\|_2)>0.
 \end{eqnarray*}

\vskip 0.1in

 Therefore after passing to a subsequence of $v_k$, this
condition implies that there exist $C
> 0$, and a sequences of unitaries $d_k\in \mathcal{U}(B)$ such that $\|E_{(A\rtimes \Sigma)^{\omega}\vee M}(rv_kd_kv_k^*)\|_2<\|r\|_2-C$ for
each $k\in \mathbb N$.

Denote by $c_k=v_kd_kv_k^*$ and notice that if one extends the polar part of $E_{(A\rtimes \Sigma)^{\omega}\vee M}(c_k)$ to a unitary $u_k \in
(A\rtimes \Sigma)^{\omega}\vee M$ then we have that $E_{(A\rtimes \Sigma)^{\omega}\vee M}(u_k^*c_k)=|E_{(A\rtimes \Sigma)^{\omega}\vee
M}(c_k)|$. Let $p_k \in (A\rtimes \Sigma)^{\omega}\vee M$ be the spectral projection of $|E_{(A\rtimes \Sigma)^{\omega}\vee M}(c_k)|$
corresponding to the set $[0, 1-\frac{C}{2})$. This implies $\||E_{(A\rtimes \Sigma)^{\omega}\vee M}(c_k)|(1-p_k)r\|_2\ge
(1-\frac{C}{2})\|(1-p_k)r\|_2$ and using the triangle inequality we obtain

\begin{eqnarray}
\nonumber \|rp_k\|_{2,\omega}&\ge&\|r\|_{2,\omega}-\|r(1-p_k)\|_{2,\omega} \\
\nonumber&\ge& \|r\|_{2,\omega}-\frac{1}{1-\frac{C}{2}}\||E_{(A\rtimes \Sigma)^{\omega}\vee M}(c_k)|(1-p_k)r\|_{2,\omega}\\
\nonumber&\ge& \|r\|_{2,\omega}-\frac{1}{1-\frac{C}{2}}\|E_{(A\rtimes \Sigma)^{\omega}\vee M}(c_k)r\|_{2,\omega}\\
\nonumber&\ge& \|r\|_{2,\omega}-  \frac{\|r\|_2-C}{1-\frac{C}{2}} \ge \frac{C}{2-C}.
\end{eqnarray}

\noindent It then follows that $\frac{C^2}{(2-C)^2}\le\tau_{\omega}(rp_k)$ and notice that we also have
\begin{align*}
 \|p_k|E_{(A\rtimes \Sigma)^{\omega}\vee M}(c_k)|\|_{\infty}&=\| p_kE_{(A\rtimes \Sigma)^{\omega}\vee M}(u_k^*c_k)\|_{\infty} \\
&= \|E_{(A\rtimes \Sigma)^{\omega}\vee M}(u_k^*c_kp_k)\|_{\infty}\le 1-\frac{C}{2}.
\end{align*}

We may assume that $c_k=(c^k_n)_n$ with $c^k_n$ unitaries in $B$, $p_k=(p^k_n)_n$ with $p^k_n$ projections in $M$ such that
$\frac{C}{2-C}\le\tau_{\omega}((p^k_n)_n)$, $u_k^*=((u^k_n)^*)_n$ with $u^k_n$ unitaries in $M$, and $E_{(A\rtimes \Sigma)^{\omega}\vee
M}(u_k^*c_kp_k)=(y^k_n)_n$ with $\|y^k_n\|_{\infty}\le 1-\frac{C}{2}$ for all $n,k$. Since $p_k$ commutes with $|E_{(A\rtimes
\Sigma)^{\omega}\vee M}(c_k)|$ we have $E_{(A\rtimes \Sigma)^{\omega}\vee M}(u_k^*c_kp_k)=E_{(A\rtimes \Sigma)^{\omega}\vee M}(p_ku_k^*c_k)$.
This implies that $p_kE_{(A\rtimes \Sigma)^{\omega}\vee M}(u_k^* c_kp_k)=E_{(A\rtimes \Sigma)^{\omega}\vee M}(u_k^*c_kp_k){p_k}=E_{(A\rtimes
\Sigma)^{\omega}\vee M}(u_k^*c_kp_k)$, thus $\lim_{n\ra\omega}\|y^k_np^k_n-y^k_n\|_2=0$ and $\lim_{n\ra\omega}\|y^k_np^k_n-y^k_n\|_2=0$ for all
$k$. Therefore, by replacing $y^k_n$ with $p^k_ny^k_np^k_n$ we may assume in addition that $p^k_ny^k_n=y^k_np^k_n=y^k_n$ for all $n,k$.

Denote by $z^k_n=p^k_n(u^k_n)^*c^k_n-y^k_n$, $t^k_n=(u^k_n)^*c^k_np^k_n-y^k_n$, and $C' = \frac{4}{C(4-C)} = (1 - (1 - \frac{C}{2})^2 )^{-1}$
and we show next that for all $n,k\in \mathbb N$ and $ b\in \mathcal{U}(B)$ we have the following inequality:
\begin{eqnarray}\label{118}
\|\dd_{\alpha}(p^k_nb) \|^2 &\le & C'(|\langle u^k_nt^k_n\dd_{\alpha}(p^k_nb)(c^k_n)^*, \dd_{\alpha}(b)\rangle| \\
\nonumber &+& |\langle \dd_{\alpha}(p^k_nb), (y^k_n)^*z^k_n \dd_{\alpha}(b) \rangle|+100\|\dd_{\alpha}(p^k_n)\|^{\frac{1}{2}} \\
\nonumber &+& 100 \|\dd_{\alpha}(d_k)\|^{\frac{1}{2}} + 1000\|\dd_{\alpha}(v^k_n)\|^{\frac{1}{4}}).
\end{eqnarray}

Fix $b \in \mathcal{U}(B)$.  Applying Proposition~\ref{70} several times and using $[b,c^k_n]=0$, we obtain the following estimate:
\begin{eqnarray}\label{218}
\|\dd_{\alpha}(p^k_nb) \|^2&=&\langle\dd_{\alpha}(p^k_nb), \dd_{\alpha}(p^k_nb)\rangle\\
\nonumber &\le &|\langle (u^k_n)^*c^k_np^k_n\dd_{\alpha}(p^k_nb), (u^k_n)^*c^k_n\dd_{\alpha}(b_)\rangle|+50\|\dd_{\alpha}(p^k_n)\|^{\frac{1}{2}}\\
\nonumber &\le &|\langle y^k_n\dd_{\alpha}(p^k_nb), (u^k_n)^*c^k_n \dd_{\alpha}(b) \rangle|\\
\nonumber &+& |\langle t^k_n\dd_{\alpha}(p^k_nb), (u^k_n)^*c^k_n\dd_{\alpha}(b) \rangle|+50\|\dd_{\alpha}(p^k_n)\|^{\frac{1}{2}}\\
\nonumber &\le &|\langle y^k_n \dd_{\alpha}(p^k_nb), (u^k_n)^*c^k_n\dd_{\alpha}(b)\rangle|\\
\nonumber & +& |\langle
u^k_nt^k_n\dd_{\alpha}(p^k_nb)(c^k_n)^*,\dd_{\alpha}(b)\rangle|+100\|\dd_{\alpha}(c^k_n)\|^{\frac{1}{2}}+50\|\dd_{\alpha}(p^k_n)\|^{\frac{1}{2}}.
\end{eqnarray}

Using the identity $y^k_np^k_n=p^k_ny^k_n=y^k_n$ and Proposition~\ref{70}, together with inequality $\|y^k_n\|_{\infty}\le 1-\frac{C}{2}$ we
have
\begin{eqnarray}\label{228}
&& |\langle y^k_n\dd_{\alpha}(p^k_nb),(u^k_n)^*c^k_n\dd_{\alpha}(b)\rangle| \\
\nonumber &=& |\langle y^k_n\dd_{\alpha}(p^k_nb), p^k_n(u^k_n)^*c^k_n\dd_{\alpha}(b)\rangle|\\
\nonumber &\le &|\langle y^k_n\dd_{\alpha}(p^k_nb), y^k_n\dd_{\alpha}(b)\rangle|+|\langle y^k_n\dd_{\alpha}(p^k_nb),z^k_n\dd_{\alpha}(b)\rangle|\\
\nonumber &\le &\|y^k_n\dd_{\alpha}(p^k_nb)\|^2+|\langle y^k_n\dd_{\alpha}(p^k_nb), z^k_n \dd_{\alpha}(b)\rangle|+50\|\dd_{\alpha}(p^k_n)\|^{\frac{1}{2}} \\
\nonumber &\le&\|y^k_n\|^2_{\infty}\|\dd_{\alpha}(p^k_nb)\|^2+|\langle \dd_{\alpha}(p^k_nb),(y^k_n)^*z^k_n\dd_{\alpha}(b)]\rangle|+50\|\dd_{\alpha}(p^k_n)\|^{\frac{1}{2}}\\
\nonumber &\le &(1-\frac{C}{2})^2\|\dd_{\alpha}(p^k_nb)\|^2+|\langle
\dd_{\alpha}(p^k_nb),(y^k_n)^*z^k_n\dd_{\alpha}(b)\rangle|+50\|\dd_{\alpha}(p^k_n)\|^{\frac{1}{2}}.
\end{eqnarray}

Finally, Proposition~\ref{70} and inequality $(x+y)^{\frac{1}{2}}\le x^{\frac{1}{2}}+y^{\frac{1}{2}} \text{ for }x,y\ge 0$ imply that
$\|\dd_{\alpha}(c^k_n)\|^{\frac{1}{2}}\le 10\|\dd_{\alpha}(v^k_n)\|^{\frac{1}{4}}+\|\dd_{\alpha}(d_k)\|^{\frac{1}{2}}$.  Combining this with
(\ref{228}) and (\ref{218}) we obtain (\ref{118}).

We will now demonstrate how the above inequality (\ref{118}) implies our lemma.  Fix an arbitrary $k\in \mathbb N$ and observe that since $
(p^k_n)_n\in (A\rtimes \Sigma)^{\omega}\vee M$ and $d_k\in N$ there exists $\b^1_k>0$ such that
\begin{equation}\label{108}
\lim_{n\ra \omega}\|\dd_{\alpha}(p^k_n)\|,\|\dd_{\alpha}(d_k)\|\le \frac{1}{k}\text{ for all }\alpha>\b^1_k.
\end{equation}

Next, since $(u^k_n)_n,((y^k_n)^*)_n\in (A\rtimes \Sigma)^{\omega}\vee M$ we have that $E_{(A\rtimes \Sigma)^{\omega}\vee
M}((u^k_nt^k_n)_n)=E_{(A\rtimes \Sigma)^{\omega}\vee M}(((y^k_n)^*z^k_n)_n)=0$.  Also, it is clear that $(p^k_nb)_n, b = (b)_n \in (A\rtimes
\Sigma)^{\omega}\vee M$ and hence Lemma~\ref{orthog} implies that for all $\alpha>0$ we have
\begin{equation}\label{98}
\lim_{n\ra\omega}|\langle u^k_nt^k_n\dd_{\alpha}(p^k_nb)(c^k_n)^*,\dd_{\alpha}(b)\rangle| = 0,
\end{equation}
and
\begin{equation}\label{998}
\lim_{n \ra \omega} |\langle \dd_{\alpha}(p^k_nb),(y^k_n)^*z^k_n\dd_{\alpha}(b)\rangle| =0.
\end{equation}

Taking $\lim_{n\ra\omega}$ in inequality (\ref{118})(for $k=s$) and using successively (\ref{108}), (\ref{98}), and (\ref{998}) we have that
for all $b\in \mathcal{U}(B)$, and $\alpha> \b^2_s=\max\{\b^1_s,\alpha_s\}$,
\begin{equation*}
\lim_{n\ra\omega}\|\dd_{\alpha}(p^s_nb)\|^2\le C'(\frac{200}{s^{\frac{1}{2}}}+\frac{1000}{s^{\frac{1}{4}}}).
\end{equation*}

Using again Proposition~\ref{70} and (\ref{108}) in conjunction with the previous inequality we have that
\begin{equation}\label{268}
\lim_{n\ra\omega}|\langle p^s_n\dd_{\alpha}(b),\dd_{\alpha}(b)\rangle|\le
C'(\frac{200}{s^{\frac{1}{2}}}+\frac{1000}{s^{\frac{1}{4}}})+\frac{100}{s^\frac{1}{2}},
\end{equation}
for all $\alpha>\b^2_{s}$ and $b\in \mathcal{U}(B)$.

By considering a subsequence, we may assume that (\ref{268}) holds as $ n\ra\infty$ and that $p^s_n$ converges weakly to $x_s\in M$. Since each
$p^s_n$ is a projection satisfying $\frac{C^2}{(2-C)^2}\le\tau(rp^s_n)$ we have that then $0\le x_s\le 1$ and
$\frac{C^2}{(2-C)^2}\le\tau(rx_s)$. Using the weak continuity of the left action of $M$ on $\mathcal H$, inequality (\ref{268}) implies that
for all $\alpha>\b^2_{s}$, $b\in \mathcal{U}(B)$
\begin{equation}\label{278}
\langle x_s\dd_{\alpha}(b),\dd_{\alpha}(b)\rangle\le C'(\frac{200}{s^{\frac{1}{2}}}+\frac{1000}{s^{\frac{1}{4}}})+\frac{100}{s^\frac{1}{2}} .
\end{equation}

Denote by $r_s=x^{\frac{1}{2}}_s$ and let $\b^3_s>0$ such that $\|\dd_{\alpha}(r_s)\|\le \frac{1}{s}$ for all $\alpha>\b^3_s$. Combining this
with (\ref{278}) and Proposition~\ref{70} if we let $\b_s=\max\{\b^3_s,\b^2_s\}$ then we obtain that
\begin{equation*}
\|\dd_{\alpha}(r_sb)\|^2\le C'(\frac{200}{s^{\frac{1}{2}}}+\frac{1000}{s^{\frac{1}{4}}})+\frac{200}{s^\frac{1}{2}},
\end{equation*}
for all  $\alpha>\b_{s}$ and $b\in \mathcal{U}(B)$.

Finally, since for every $s\in \mathbb N$ we have $\frac{C^2}{(2-C)^2}\le\tau(rx_s)\le\tau(rr_s)$, setting $D=\frac{C^2}{(2-C)^2}$ and
$\e_s=[C'(\frac{200}{s^{\frac{1}{2}}}+\frac{1000}{s^{\frac{1}{4}}})+\frac{200}{s^\frac{1}{2}}]^{\frac{1}{2}}$ completes the proof of the lemma.
\end{proof}

Next we use the previous lemma to show the main result of the section.

\begin{thm}\label{derivunifconv}
 Using the above notation, assume there exist an infinite subset $\mathcal F\subset\mathcal N_M(B)$ and a nonzero projection $r\in \mathcal F'\cap M$
 satisfying the following property:

For every $k\in\mathbb N$ there exist $\alpha_k>0$ and a sequence $\{v^k_n \ | \ n\in \mathbb N\}\subset \mathcal F$ such that
\begin{eqnarray}\label{78}
&&\|\dd_{\alpha}(v^k_n)\|_2\le \frac{1}{k}\text{ for all } \alpha>\alpha_k, k, n \in \mathbb N, \\
\label{248} &&\|E_{A\rtimes \Sigma}(xrv^k_ny)\|_2\ra 0 \text{ as }n\ra\infty,\text{ for each }k\in \mathbb N.
\end{eqnarray}
Then there exists a nonzero projection $q\in\mathcal Z(\mathcal N_M(B)'')$ such that $rq\neq0$ and $\dd_{\alpha}$ converges uniformly to zero
on $q(B)_1$ as $\alpha\ra\infty$. In particular, if $B$ is semiregular, then $\dd_{\alpha}$ converges uniformly to zero on the unit ball
$(B)_1$ as $\alpha\ra\infty$.
\end{thm}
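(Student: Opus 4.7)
The plan is to combine Lemma~\ref{288} with a convexity/weak-compactness argument to produce a single element of $M$ on which $\dd_\alpha$ vanishes uniformly against $\mathcal U(B)$, and then to average over $\mathcal N_M(B)$ to extract the desired central projection. First I apply Lemma~\ref{288} to obtain a constant $D>0$ and, for each $s\in\mathbb N$, a positive contraction $r_s\in M$ with $\tau(rr_s)\ge D$, together with $\epsilon_s,\beta_s>0$ and $\epsilon_s\to 0$, such that $\|\dd_\alpha(r_s b)\|\le\epsilon_s$ for every $\alpha>\beta_s$ and every $b\in\mathcal U(B)$.

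Next I introduce the functional
$$\Phi(y):=\lim_{\alpha\to\infty}\sup_{b\in\mathcal U(B)}\|\dd_\alpha(yb)\|,\qquad y\in M,$$
which is well-defined as a decreasing limit in $\alpha$. Because each $\dd_\alpha$ is an $L^2$-contraction, $\Phi$ is a seminorm on $M$ bounded by $\|\cdot\|_2$, hence $\|\cdot\|_2$-continuous. Combined with convexity (Mazur), this makes $\Phi$ weakly lower semicontinuous on bounded sets; moreover $\{y:\Phi(y)<c\}=\bigcup_\alpha\{y:\sup_b\|\dd_\alpha(yb)\|<c\}$ is a union of weakly open sets, so $\Phi$ is also weakly upper semicontinuous, and thus weakly continuous on $\|\cdot\|_\infty$-bounded subsets. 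The bound from Lemma~\ref{288} reads $\Phi(r_s)\le\epsilon_s$. For each $n$ I pick $N(n)$ with $\epsilon_s<1/n$ whenever $s\ge N(n)$ and form a finite convex combination $x_n:=\sum_{s\ge N(n)}c_s^{(n)}r_s$; then $0\le x_n\le 1$, $\tau(rx_n)\ge D$, and by subadditivity $\Phi(x_n)<1/n$. Passing to a weakly convergent subsequence $x_{n_k}\to x^*$ yields $x^*\in M$ with $0\le x^*\le 1$, $\tau(rx^*)\ge D$ (so $x^*\neq 0$) and $\Phi(x^*)=0$.

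To promote $x^*$ to a central projection I average over $\mathcal N_M(B)$. For $u\in\mathcal N_M(B)$ the identity $u^*x^*u\,b=u^*x^*(ubu^*)u$ together with $ubu^*\in B$, Proposition~\ref{70}, and the pointwise vanishing of $\|\dd_\alpha(u)\|,\|\dd_\alpha(u^*)\|$, gives $\|\dd_\alpha(u^*x^*ub)\|\le\|\dd_\alpha(x^*(ubu^*))\|+o(1)$ as $\alpha\to\infty$, and hence $\Phi(u^*x^*u)=0$. Analogous estimates show the zero set $\mathcal Z_\Phi:=\{y\in M:\Phi(y)=0\}$ is a weakly closed $\mathcal N_M(B)$-bimodule in $M$, which extends to a $\mathcal N_M(B)''$-bimodule via Kaplansky density and the weak continuity of $\Phi$. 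In particular $\mathcal Z_\Phi\cap\mathcal N_M(B)''$ is a weakly closed two-sided ideal of $\mathcal N_M(B)''$ and therefore equals $\mathcal N_M(B)''q$ for a unique central projection $q\in\mathcal Z(\mathcal N_M(B)'')$. To establish $q\neq 0$, the Dixmier-type averaging $x_c:=E_{\mathcal N_M(B)'\cap M}(x^*)$ lies in the weakly closed convex hull of $\{ux^*u^*:u\in\mathcal N_M(B)\}$, so $\Phi(x_c)=0$; since $x_c\ge 0$ and $\tau(x_c)\ge\tau(rx^*)\ge D$, $x_c$ is nonzero. When $B$ is a MASA, $\mathcal N_M(B)'\cap M=\mathcal Z(\mathcal N_M(B)'')$, so spectral projections of $x_c$ lie in $\mathcal Z(\mathcal N_M(B)'')$ directly; in general one further projects using the ideal structure above. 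A nonzero spectral projection of $x_c$ gives $q\neq 0$, and the comparison $x_c\le\|x_c\|_\infty q$ yields $\tau(rq)\ge\tau(rx_c)/\|x_c\|_\infty>0$, so $rq\neq 0$. Finally, since $q\in\mathcal Z_\Phi$ and $\mathcal Z_\Phi$ is a $\mathcal N_M(B)''$-bimodule, $qb\in\mathcal Z_\Phi$ for all $b\in\mathcal U(B)$, giving $\sup_{b\in\mathcal U(B)}\|\dd_\alpha(qb)\|\to 0$; Russo--Dye extends this to $(B)_1$. The in-particular statement follows because for semiregular $B$ the factor $\mathcal N_M(B)''$ has trivial center, forcing $q=1$.

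The main obstacle will be the final averaging step: pushing the element $x_c$ genuinely into $\mathcal Z(\mathcal N_M(B)'')$ and ensuring the resulting central projection is nonzero. This requires carefully combining the $\mathcal N_M(B)''$-bimodule structure of $\mathcal Z_\Phi$ with Dixmier averaging and Kaplansky density, particularly when $B$ is not a MASA; the weak upper-semicontinuity of $\Phi$, although surprisingly easy once one recognizes it as a pointwise decreasing limit of weakly lower-semicontinuous seminorms, is the other technical linchpin.
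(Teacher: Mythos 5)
Your argument follows the same underlying strategy as the paper's proof: Lemma~\ref{288} produces the elements $r_s$, a convexity/weak-compactness step produces a positive contraction $x^*$ with $\tau(rx^*)\ge D$ on which $\dd_\alpha$ is uniformly small against $\mathcal U(B)$, and averaging over $\mathcal N_M(B)$ yields the central projection. The packaging differs: you encode everything in the seminorm $\Phi$ and identify $q$ as the support of the $\sigma$-weakly closed two-sided ideal $\mathcal Z_\Phi\cap\mathcal N_M(B)''$, whereas the paper runs a Zorn's lemma argument on the poset of projections $p\in M$ for which $\dd_\alpha\to 0$ uniformly on $p(B)_1$, shows the maximal one is $\mathcal N_M(B)$-invariant, and gets $rq\neq 0$ by checking that this maximal projection dominates a spectral projection of the \emph{un-averaged} weak limit $x$ (via $cp\le x$ and Proposition~\ref{70}). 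Your formulation is arguably cleaner, but it needs something the Zorn route gets for free: a nonzero element of $\mathcal Z_\Phi$ lying inside $\mathcal N_M(B)''$. Your $x_c\in\mathcal N_M(B)'\cap M$ sits in $\mathcal N_M(B)''$ only when $B$ is a MASA (true in the applications, but your phrase ``in general one further projects'' is not an argument).

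Two specific points need repair. First, the claimed weak upper semicontinuity of $\Phi$ is false: $\{y:\sup_b\|\dd_\alpha(yb)\|<c\}$ is the strict sublevel set of a weakly lower semicontinuous function and need not be weakly open. Fortunately only lower semicontinuity (convex, $\|\cdot\|_2$-closed sublevel sets intersected with bounded sets) is used where it matters, and the Kaplansky step only requires the estimate $\Phi(y(a-a_n))\le\|y\|_\infty\|a-a_n\|_2$, so this is a harmless misstatement rather than a fatal one. Second, and more seriously, the inequality $\tau(rx_c)>0$ is asserted without proof, and it is the crux of $rq\neq 0$: since $r$ commutes only with $\mathcal F$ and not with all of $\mathcal N_M(B)$, the Dixmier averaging could a priori move the mass of $x^*$ away from $r$, so the bound $\tau(rx^*)\ge D$ does not transfer formally. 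The claim is in fact true: writing $E=E_{\mathcal N_M(B)'\cap M}$ one has $\tau(rx_c)=\tau(E(r)E(x^*))$, and if this vanished then $E(r)$ and $E(x^*)$ would have orthogonal supports $s,t\in\mathcal N_M(B)'\cap M$; faithfulness of $E$ applied to $E((1-s)r(1-s))=0$ and $E((1-t)x^*(1-t))=0$ forces $r=srs$ and $x^*=tx^*t$, whence $\tau(rx^*)=\tau(r(st)x^*(ts))=0$, contradicting $\tau(rx^*)\ge D$. You must supply this (or revert to the paper's device of a spectral projection of $x^*$ itself); as written, the step that delivers $rq\neq 0$ is a gap.
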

\begin{proof} Applying Lemma \ref{288}, there exits a constant $ D>0$ such that for every $s\in \mathbb N$ there exist $r_s\in M\text{ with }
0 < r_s\le1$, and positive numbers $\e_s$, $\b_s>0$, such that for all $\alpha>\b_{s}$ and $b \in \mathcal{U}(B)$, we have
 \begin{eqnarray}\label{328}
 \|\dd_{\alpha}(r_sb ) \|&\le& \e_s ,\\
\label{298}\tau(rr_s)&\ge&D,\\
\label{498}\e_s&\downarrow &0\text{ as }s\ra\infty.
\end{eqnarray}

Next we use these relations in combination with a standard convexity argument to show that $\dd_{\alpha}$ converges to zero uniformly on
$q\mathcal{U}(B)$.

Passing to a subsequence if necessary we can assume that $r_s$ converges weakly to $x$ for some $x\in M$. The above inequalities imply that
$0<x\le 1$ and $\tau(rx)\ge D$. Moreover, for every $s\in \mathbb N$ there exist nonempty, finite set $F_s\subset \mathbb N$, scalars
$\sum_{i\in F_s} \mu^s_i=1$ and positive numbers $\e'_s>0$ such that
\begin{eqnarray}
&&k_s=\min (F_s)\uparrow\infty\text{, as }s\ra \infty, \\
\label{701}&&\|x-\sum_{i\in F_s} \mu^s_ir_i\|_2\le \e'_s,\\
&&\e'_s\uparrow \infty \text{, as }s\ra \infty.
\end{eqnarray}

 Let $\beta^1_s=\max_{l\in F_s}\beta_l$. Therefore, using (\ref{701}) and (\ref{328}), for all $b\in\mathcal U(B)$ and $\alpha>\beta^1_s$ we have the following

\begin{eqnarray}
\label{711}\|\dd_{\alpha}(x b)\|&\le& \|\dd_{\alpha}((x-\sum_{i\in F_s} \mu^s_ir_i) b)\|+\|\dd_{\alpha}((\sum_{i\in F_s} \mu^s_ir_i)b)\|\\
\nonumber&\le& \|x-\sum_{i\in F_s} \mu^s_ir_i\|+\sum_{i\in F_s} \mu^s_i\|\dd_{\alpha}(r_ib)\|\\
\nonumber&\le& \e'_s+\sum_{i\in F_s} \mu^s_i\e_i\\
\nonumber &\le& \e'_s+(\sum_{i\in F_s} \mu^s_i)\e_{k_s}=\e'_s+\e_{k_s}.
\end{eqnarray}

\noindent Also, since $xr\neq 0$, there exists $c>0$ such that if $p$ denotes the spectral projection of $x$ corresponding to the set
${(c,\infty)}$ then we have that $pr\neq 0$. Next, for every $s\in \mathbb N$ we let $\beta^2_s>0$ such that for all $\alpha>\beta^2_s$ we have
\begin{equation}\label{712} \|\dd_{\alpha}(p)\|\le \frac{1}{s},\quad\|\dd_{\alpha}(x)\|\le \frac{1}{s}.
\end{equation}

\noindent Therefore, using Proposition~\ref{70} together with inequality $cp\le x$ and relations (\ref{711})-(\ref{712}), for all $s\in\mathbb
N$, $b\in \mathcal U(B)$ and $\alpha>\max\{\b^1_s,\b^2_s\}$ we have the following

\begin{eqnarray}
\nonumber\|\dd_{\alpha}(p b)\|&\le& \|p\dd_{\alpha}(b)\|+50\|\dd_\alpha(p)\|^{\frac{1}{2}} \\
\nonumber&\le& \frac{1}{c}\|x\dd_\alpha(b)\|+50\|\dd_\alpha(p)\|^{\frac{1}{2}}\\
\nonumber&\le& \frac{1}{c}(\|\dd_\alpha(xb)\|+50\|\dd_\alpha(x)\|^{\frac{1}{2}})+50\|\dd_\alpha(p)\|^{\frac{1}{2}}\\
\label{702} &\le& \frac{1}{c}(\e'_s+\e_{k_s}+\frac{50}{s^{\frac{1}{2}}})+\frac{50}{s^{\frac{1}{2}}}.
\end{eqnarray}
\noindent Notice that since $k_s \uparrow\infty$, $\e_s\ra 0$ and $\e'_s\ra 0$ as $s\ra\infty$ we have that
$\frac{1}{c}(\e'_s+\e_{k_s}+\frac{50}{s^{\frac{1}{2}}})+\frac{50}{s^{\frac{1}{2}}}\ra 0$ as $s\ra\infty$ and hence we conclude that
$\dd_\alpha$ converges uniformly to zero on $p(B)_1$ as $\alpha \ra \infty$.

\vskip 0.1in

Consider the partially ordered set
\begin{eqnarray*}\mathcal V=\{p\in M, \text{ projection } \ | \ \dd_\alpha\text{ converges uniformly to zero on
}q(B)_1 \},
\end{eqnarray*}
where the partial order is given by the regular operatorial order.

\noindent The first part of the proof shows that $p\in \mathcal V$ and therefore $\mathcal F$ is nonempty. Given any increasing chain
$p_\iota\in\mathcal V$ let $p_{\infty}$ to be the supremum of $p_\iota$'s.

Then for every $s\in \mathbb N$ there exists $\iota_s$ such that

\begin{eqnarray}\label{719}\|p_{\iota_s}-p_{\infty}\|_2\le \frac{1}{s}.\end{eqnarray}

Also, since $\dd_\alpha$ converges uniformly to zero on $p_{\iota_s}(B)_1$, for every $s\in \mathbb N$ there exists $\beta^3_s>0$ such that for
all $\alpha>\beta^3_s$ and $b\in (B)_1$ we have
\begin{eqnarray}\label{717}\|\dd_\alpha(p_{\iota_s}b)\|&\le& \frac{1}{s}.
\end{eqnarray}
Therefore, using the triangle inequality together with (\ref{719})-(\ref{717}), for every $s\in \mathbb N$ there exists $\beta^3_s>0$ such that
for all $\alpha>\beta^3_s$ and $b\in (B)_1$ we have
\begin{eqnarray}\label{720} \|\dd_\alpha(p_{\infty}b)\|&\le&\|\dd_\alpha((p_{\infty}-p_{\iota_s})b)\|+\|\dd_\alpha(p_{\iota_s}b)\|\\
\nonumber&\le& \|(p_{\infty}-p_{\iota_s})\|_2+\|\dd_\alpha(p_{\iota_s}b)\| \le \frac{2}{s}.
\end{eqnarray}
This shows that $\dd_\alpha$ converges uniformly to zero on $p_{\infty}(B)_1$.

By Zorn's Lemma, $\mathcal F$ contains a maximal element which we call $q$. Fix $u\in \mathcal N_M(B)$, since $\dd_\alpha$ converges uniformly
to zero on $q(B)_1$ and $u\in \mathcal N_M(B)$ then for every $s\in \mathbb N$ there exists $\beta^4_s>0$ such that for all $\alpha>\beta^4_s$
and $b\in (B)_1$ we have
\begin{eqnarray}\label{713}\|\dd_\alpha(qu^*bu)\|&\le& \frac{1}{s}.
\end{eqnarray}
Also, there exists $\beta^5_s>0$ such that for all $\alpha>\beta^5_s$ we have
\begin{eqnarray}\label{714}\|\dd_\alpha(u)\|&\le& \frac{1}{s}.
\end{eqnarray}
Therefore, using Proposition \ref{70} in combination with (\ref{713}) and (\ref{714}), for every $s\in \mathbb N$,
$\alpha>\max\{\b^5_s,\b^4_s\}$ and $b\in (B)_1$ we have the following

\begin{eqnarray}\nonumber\|\dd_\alpha(uqu^*b)\|&\le&\|u\dd_\alpha(qu^*bu)u^*\|+100\|\dd_\alpha(u)\|^{\frac{1}{2}}\\
\nonumber&=&\|\dd_\alpha(qu^*bu)\|+100\|\dd_\alpha(u)\|^{\frac{1}{2}} \\
\nonumber&\le &\frac{1}{s}+\frac{100}{s^{\frac{1}{2}}}.
\end{eqnarray}

\noindent This shows that $\dd_\alpha$ converges uniformly to zero on $(uqu^*)(B)_1$ and therefore $\dd_\alpha$ converges uniformly to zero on
$(uqu^*+q)(B)_1$. For any $c>0$ denote by $q_c$ the spectral projection of $uqu^*+q$ corresponding to the interval $[c,\infty)$ and let $r_c$
be the inverse of $q_c(uqu^*+q)$ in $q_cMq_c$. Since $\dd_\alpha$ converges uniformly to zero on $(uqu^*+q)(B)_1$ then by using Proposition
\ref{70} again we obtain that $\dd_\alpha$ converges to zero uniformly on $r_c(uqu^*+q)(B)_1=q_c(B)_1$. Since this holds for every $c>0$ we
conclude that $\dd_\alpha$ converges to zero uniformly on $q_0(B)_1$ where $q_0=supp(uqu^*+q)$

Since $supp(vuqu^*v^*+q)=uqu^*\vee q$ and since $q$ was a maximal element of $\mathcal V$ we have that $q=uqu^*\vee q$, or equivalently,
$q=uqu^*$. Since the above procedure can be done for any $u\in \mathcal N_M(B)$ we have that $q\in \mathcal N_M(B)'\cap M$ and thus $q\in
\mathcal Z(\mathcal N_M(B)'')$. Also, notice that a similar argument as above shows that $q$ is the unique maximal element of $\mathcal V$.
Combining this with the first part of the proof we have that $q\ge p$ and therefore $qr\neq 0$.\end{proof}

\section{A Transfer Lemma}\label{sec:transfer}
For the transfer lemma that we cover in this section we need some preliminary results on Hilbert bimodules. The following lemma is well known
to the experts and we include a proof only for completeness.

\begin{lem}\label{lem:diagonalbimodule}
Suppose that $B$ is a diffuse amenable von Neumann algebra, $\La$ is a countable discrete group, and $\La \car B$ is a trace preserving action
such that $N = B\rtimes \La$.  Let $\mathcal H$ be a Hilbert $N$-bimodule.  Let $\Delta:N\ra N\overline{\otimes}N$ be the $*$-homomorphism
defined by $\Delta(\sum b_{\l}v_{\l})=\sum b_{\l}v_{\l}\otimes v_{\l}$  where $\l\in\La$ and $b_{\l}\in B$, and consider the $N$-bimodule
$L^2N\oo \mathcal H$ where the left-right $N$-actions satisfy $x\cdot \xi \cdot y =\Delta(x)\xi\Delta(y)$ for all $x,y\in N, \xi\in L^2N\oo
\mathcal H$. If the $N$-bimodule $\mathcal H$ is weakly contained in the coarse bimodule then the $N$-bimodule $L^2N\oo \mathcal H$ is also
weakly contained in the coarse bimodule.
\end{lem}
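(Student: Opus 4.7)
Plan. By transitivity of weak containment, and since the construction $\mathcal H \mapsto L^2N \oo \mathcal H$ (with the diagonal structure) is compatible with direct sums and weak limits of bimodules, it suffices to establish the statement in the single special case $\mathcal H = L^2N \bar\otimes L^2N$ (the coarse $N$-bimodule). In this case, $L^2N \oo \mathcal H$ is realized on $L^2N^{\bar\otimes 3}$, and unwinding the definitions the diagonal $N$-bimodule structure becomes, for $x=\sum_\lambda x_\lambda v_\lambda,\ y=\sum_\mu y_\mu v_\mu\in N$,
\begin{align*}
x\cdot(\xi_1\otimes\xi_2\otimes\xi_3)&=\sum_\lambda (x_\lambda v_\lambda)\xi_1\otimes v_\lambda \xi_2\otimes\xi_3,\\
(\xi_1\otimes\xi_2\otimes\xi_3)\cdot y&=\sum_\mu \xi_1(y_\mu v_\mu)\otimes\xi_2\otimes \xi_3 v_\mu.
\end{align*}

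The central technical tool is a pair of Fell-type absorption unitaries adapted to the crossed product $N=B\rtimes\Lambda$. Define $W_{12}(v_\eta\xi_1\otimes\eta_2\otimes\eta_3)=v_\eta\xi_1\otimes v_\eta\eta_2\otimes\eta_3$ (for $\xi_1\in L^2B$, $\eta_2,\eta_3\in L^2N$) and a right-side analog $W_{13}(v_\eta\xi_1\otimes\eta_2\otimes v_\kappa\xi_3)=v_\eta\xi_1 v_\kappa\otimes\eta_2\otimes v_\kappa\xi_3$. Both are unitaries, and a direct computation shows that conjugation by $W_{13}W_{12}$, together with an involutive flip on the middle factor and a further Fell unitary on the last two factors, converts the $N$-action on pure group elements into the coarse one: the left action of $v_\lambda$ becomes $(v_\lambda)_L\otimes 1\otimes 1$ and the right action of $v_\mu$ becomes $1\otimes 1\otimes (v_\mu)_R$. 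After this conjugation, the middle tensor factor plays the role of a multiplicity space for the $\Lambda$-part of the action.

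The remaining complication is the right action of $b\in B$, which after these conjugations takes the $\kappa$-twisted form
\[
v_\eta\xi_1\otimes\xi_2\otimes v_\kappa\xi_3 \ \longmapsto\  v_\eta(\xi_1\,\sigma_\kappa(b))\otimes\xi_2\otimes v_\kappa\xi_3,
\]
coupling the first and third factors through $\sigma_\kappa(b)$. Here the amenability of $B$ enters decisively: it supplies a net of unit vectors $(\zeta_n)\subset L^2B\bar\otimes L^2B$ with $\|b\zeta_n-\zeta_n b\|_2\to 0$ for every $b\in B$. Inserting such approximately $B$-central vectors in the middle factor of the triple tensor, together with the tracial structure, allows the twist $\sigma_\kappa(b)$ in the matrix coefficients to be replaced by $b$ up to arbitrarily small error, uniformly in $\kappa$ and on finite subsets of $N$. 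Combining this with the Fell absorption on the $\Lambda$-part produces approximating unit vectors in the coarse $N$-bimodule realizing the required matrix coefficients, yielding the desired weak containment.

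The main obstacle is the quantitative coordination of the two approximation mechanisms---Fell absorption, which algebraically simplifies the $\Lambda$-part, and the $B$-amenability net, which analytically dissolves the $B$-twist---so that the resulting approximating vectors respect the crossed-product commutation $v_\lambda b=\sigma_\lambda(b)v_\lambda$ to the required precision. The error estimates must be tight enough that both simplifications hold simultaneously on the given finite subset of $N$, and it is at this point that the diffuseness of $B$ is used to guarantee that the amenability net can be chosen compatibly with the $\Lambda$-action.
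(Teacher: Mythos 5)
Your reduction to the case $\mathcal H = L^2N \oo L^2N$ is correct and is the same first step the paper takes. The problem is in the second half. The decisive analytic step --- dissolving the twist $\sigma_\kappa(b)$ that couples the first and third tensor factors --- is never actually carried out, and the two mechanisms you propose for it do not work as stated. First, replacing $\sigma_\kappa(b)$ by $b$ ``uniformly in $\kappa$'' would require a net $(\zeta_n)$ in $L^2B\oo L^2B$ that is almost central for the whole orbit $\{\sigma_\kappa(b) : \kappa\in\La\}$ simultaneously; amenability of $B$ only provides almost centrality element by element, and this orbit is in general infinite. Second, and worse, a net of vectors in (a multiple of) $L^2B\oo L^2B$, viewed inside the coarse $N$-bimodule, that is ``compatible with the $\La$-action'' in the sense of being approximately central for all of $N$ would force $N$ itself to be amenable; such nets do not exist for, say, $B=L^\infty(X)$ and $\La$ a nonabelian free group. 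Diffuseness of $B$ has nothing to do with this and is in fact not used anywhere in the proof of this lemma. The correct, weaker statement --- that one only ever needs almost centrality for the finitely many elements $\sigma_\kappa(b)$ with $\kappa$ ranging over the support of a fixed finitely supported vector and $b$ over the fixed finite subset of $N$ --- would repair the argument, but that requires first reducing to finitely supported vectors and then running the estimate, neither of which appears in your write-up.

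The paper avoids all of this with a short structural argument: taking an orthonormal basis $\{b_i\}$ of $L^2B$ and computing the matrix coefficients of the vectors $1\otimes(b_i v_h\otimes b_j)$, one finds $\langle bv_\l\cdot\eta\cdot b'v_{\l'},\eta\rangle = \tau(E_B(bv_\l)\,b'v_{\l'})$, with distinct triples $(i,h,j)$ giving mutually orthogonal cyclic sub-bimodules; this identifies $L^2N\oo(L^2N\oo L^2N)$ with $L^2\langle N,e_B\rangle^{\oplus\infty}$ as an $N$-bimodule. Since $L^2\langle N, e_B\rangle\cong L^2N\otimes_B L^2N$, the twist you are fighting is absorbed by the relative tensor product over $B$, and the weak containment $L^2N\otimes_B L^2N\prec L^2N\oo L^2N$ is then the standard consequence of amenability of $B$ alone (insert $L^2B\prec L^2B\oo L^2B$ as $B$-bimodules into the fusion). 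I would recommend either completing your estimate along the lines indicated above, or, better, replacing the second half of your argument by this identification.
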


\begin{proof}
Since, as $N$-bimodules, ${\HH}$ is weakly contained in ${L^2N \oo L^2N}$ it follows that, as $N\oo N$-bimodules, ${L^2N \oo \HH}$ is weakly
contained in $ L^2N \oo (L^2N \oo L^2N)$ and hence it is enough to consider the case when $\HH = L^2N \oo L^2N$.

In this case if we consider an orthonormal basis $\{ b_i \}_{i \in I}$ for $L^2B$, and consider the vectors $\eta = 1 \otimes (b_i v_h \otimes
b_j)$ and $\zeta = 1 \otimes (b_k v_{h'} \otimes b_l)$ then a routine calculation shows that
$$
\langle b v_{\l} \cdot \eta \cdot b' v_{\l'}, \zeta \rangle = \dd_{\l, e}\dd_{\l', e} \dd_{j, l} \dd_{h, h'} \dd_{i, k} \tau(b b' )  = \tau(
E_B(b v_\l) b' v_{\l'})
$$
This then establishes a Hilbert $N$-bimodule isomorphism between ${L^2N \oo (L^2N \oo L^2N)}$ and ${L^2\langle N, e_B \rangle^{\oplus
\infty}}$.  Since $B$ is amenable the previous bimodule is weakly contained in the coarse bimodule.
\end{proof}

For the rest of this section we will use the following notation.

{\bf Notation.} For $1\le i\le 2$ let $\G_i\in \mathcal{CR}$ and let $\Omega_i\subset \G_i$ be an i.c.c.\ subgroup with relative property (T).
Denote by $ \G = \G_1 \times \G_2$ and $\Omega=\Omega_1 \times \Omega_2 $ and assume that $\G \car A$ is a trace preserving action on an
abelian von Neumann algebra $A$ such that $N = A \rtimes \G$ is a factor. Suppose that $B$ is an abelian algebra, $\Lambda$ is an i.c.c.\ group
and $\La \car^\rho B$ a free action such that $N = A \rtimes \G = B \rtimes_\rho \La$. Let $\Delta: N \to N \oo N$ be the $*$-homomorphism from
the previous lemma defined by $\Delta(\Sigma_{\l \in \La} b_\l v_\l ) = \Sigma_{\l \in \La} b_\l v_\l \otimes v_\l$ for $\l \in \La$, and $b_\l
\in B$. Also, as in Section \ref{sec:intertwining}, for every $i$ we denote by $\G_{(i)}$ the subgroup of $\G$ consisting of all elements in
$\G$ whose $i^{\text{th}}$ coordinate is trivial.

Since $\G_i \in \mathcal{CR}$ for all $1 \le i \le 2$ there exists a corresponding unbounded cocycle into a mixing representation which is
weakly contained in the left-regular representation. As explained in Section~\ref{sec:derivations}, associated to such a cocycle is a closable
real derivation $\dd^i: N \to \HH_i$, such that the $N$-bimodule ${\HH_i}$ is mixing relative to $A \rtimes \G_{(i)}$, and such that
$\dd^i_\alpha$ does not converge uniformly on $(N)_1$.  Also, we denote by $\hat \dd^i = 0 \otimes \dd^i: N \oo N \to L^2N \oo \HH_i$ the
tensor product derivation as described in Section~\ref{sec:derivations}.

Next we show two non-intertwining lemmas which will be very important in establishing the main result of this section.

\begin{lem}\label{lem:nonintertwining} Using the notation above, for all $1\le j\le 2$ and $q_j\in \Delta(L\Omega_j)'\cap( N\oo N)$ nonzero projections
we have that $\Delta(L\Omega_j)q_j\nprec_{N\oo N }N\oo A$.
\end{lem}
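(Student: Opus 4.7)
The plan is to argue by contradiction, translating the intertwining into a bimodule statement and then invoking Lemma \ref{lem:diagonalbimodule} together with the non-amenability of $\Omega_j$.

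Suppose for contradiction that $\Delta(L\Omega_j)q_j \prec_{N\oo N} N\oo A$ for some nonzero $q_j \in \Delta(L\Omega_j)' \cap (N \oo N)$. Since $\Omega_j$ is icc, $L\Omega_j$ is a II$_1$ factor, so by a standard amplification argument one reduces to the case $\Delta(L\Omega_j) \prec_{N\oo N} N \oo A$. Through Popa's intertwining theorem in the form of the basic construction, this yields a nonzero projection $e \in \Delta(L\Omega_j)' \cap \langle N\oo N, e_{N\oo A}\rangle$ of finite semifinite trace; equivalently, the $N\oo N$-bimodule $L^2\langle N\oo N, e_{N\oo A}\rangle$ contains a nonzero $L\Omega_j$-central vector of finite norm, where $L\Omega_j$ acts on both sides through $\Delta$.

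Next I would identify the basic construction: as $N\oo N$-bimodules,
\begin{equation*}
L^2\langle N\oo N, e_{N\oo A}\rangle \;\cong\; L^2N \oo L^2\langle N, e_A\rangle,
\end{equation*}
and under this identification the left and right actions of $\Delta(N) \subset N\oo N$ correspond precisely to the $\Delta$-twisted $N$-bimodule structure $x\cdot\xi\cdot y = \Delta(x)\xi\Delta(y)$ featured in Lemma \ref{lem:diagonalbimodule}. Because $A$ is a Cartan subalgebra of $N$ (in particular abelian, hence amenable), the $N$-bimodule $L^2\langle N, e_A\rangle$ is weakly contained in the coarse $N$-bimodule. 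Applying Lemma \ref{lem:diagonalbimodule} with $\HH = L^2\langle N, e_A\rangle$, the $\Delta$-twisted $N$-bimodule $L^2 N \oo L^2\langle N, e_A\rangle$ is weakly contained in the coarse bimodule of $N$, and consequently, upon restriction of both actions through $\Delta$, it is weakly contained in the coarse $L\Omega_j$-bimodule.

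Putting the two halves together, we obtain a nonzero $L\Omega_j$-central vector inside a bimodule weakly contained in the coarse $L\Omega_j$-bimodule. By Connes' characterization of amenability this forces $L\Omega_j$, and hence $\Omega_j$, to be amenable, contradicting the non-amenability condition built into class $\mathcal{CR}$. The main delicacy is the bimodule identification and the verification that it respects the $\Delta$-twisted structure, as this is precisely the bridge from Popa's intertwining formalism to the hypothesis of Lemma \ref{lem:diagonalbimodule}; the amplification reduction and the final appeal to Connes' theorem are routine once this link is in place.
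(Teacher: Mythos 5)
Your argument is correct, but it follows a genuinely different route from the paper's. The paper first uses the rigidity of the inclusion $\Delta(L\Omega_j)\subset N\oo N$ (coming from relative property (T) of $(\G_j,\Omega_j)$) together with the relative Haagerup property of $N\oo A$ over $N\oo 1$ (Lemma 1 of Houdayer--Popa--Vaes) to upgrade $\Delta(L\Omega_j)q_j\prec_{N\oo N}N\oo A$ to $\Delta(L\Omega_j)q_j\prec_{N\oo N}N\oo 1$, and then invokes Ioana's Lemma 9.2(i) on the comultiplication to conclude $L\Omega_j q_j\prec_N B$, which is absurd since $L\Omega_j$ is a II$_1$ factor and $B$ is abelian. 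You instead never use relative property (T): you pass to the basic construction, identify $L^2\langle N\oo N,e_{N\oo A}\rangle$ with the $\Delta$-twisted bimodule $L^2N\oo L^2\langle N,e_A\rangle$, apply Lemma~\ref{lem:diagonalbimodule} (legitimate here since $A$ is amenable, so $L^2\langle N,e_A\rangle\prec L^2N\oo L^2N$), and derive amenability of $L\Omega_j$ from the existence of a central vector in a bimodule weakly contained in the coarse one --- which is essentially the mechanism of the original Popa--Vaes transfer lemma. Each approach trades one hypothesis on $\Omega_j$ for another: the paper's proof consumes relative property (T) but only needs $L\Omega_j$ to be a factor at the final step, whereas yours consumes non-amenability (and factoriality of $L\Omega_j$, to promote the central positive functional $\langle\,\cdot\,\xi,\xi\rangle$ to a multiple of the trace before applying Connes' theorem --- a small step worth making explicit). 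Both hypotheses are built into $\mathcal{CR}$, so both proofs are valid; yours has the advantage of being self-contained within the paper's own lemmas and of extending verbatim to situations where $\Omega_j$ is merely nonamenable icc without relative property (T), as in case (2) of the class $\mathcal{ACR}$ in Section~\ref{sec: other unique decomp}.
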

\begin{proof} We will proceed by contradiction. So assume there exits $1\le i\le 2$ and $q_i\in \Delta(L\Omega_i)'\cap( N\oo N)$ such that $\Delta(L\Omega_i)q_i\prec_{N\oo N }N\oo A$.
Since $A$ is abelian then $N\oo A $ has the relative Haagerup's property with respect to $N \oo 1$ and therefore by Lemma 1 in
\cite{houdayer-popa-vaes-2010} we have that

\begin{equation*}\label{401}\Delta(L\Omega_i)q_i\prec_{N\oo N }N\oo 1.\end{equation*}

\noindent Then by Lemma 9.2 (i) in \cite{adi2010} this would further imply $(L\Omega_i )q_i \prec_{N} B$, which is obviously impossible because
$L\Omega$ is a nonamenable factor while $B$ is abelian.\end{proof}

\vskip 0.01in

\begin{lem}\label{lem:nonconverge}
Using the notation above, let $1\le j\le 2$ and $p\in \Delta(L\Omega)'\cap( N\oo N)$ a nonzero projection  such that
$\Delta(L\Omega)^{\omega}p\subset (N\oo(A\rtimes \G_{j}))^{\omega}\rtimes \G_{(j)}$. Then for every $1\le k\le 2$ we have
$\Delta(L\Omega_k)p\nprec_{N\oo N}N\oo(A\rtimes \G_{(j)})$; in particular $\Delta(L\Omega)p\nprec_{N\oo N}N\oo(A\rtimes \G_{(j)})$.
\end{lem}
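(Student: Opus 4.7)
My plan is to argue by contradiction: assume $\Delta(L\Omega_k) p \prec_{N \oo N} N \oo (A \rtimes \G_{(j)})$. Writing $N \oo N = (N \oo (A \rtimes \G_{(j)})) \rtimes \G_j$, and noting that $\Delta(L\Omega_k)$ is a II$_1$ factor because $\Omega_k$ is i.c.c., the equivalence $(1) \Leftrightarrow (3)$ in Proposition~\ref{intertwining} produces a nonzero projection $p' \le p$ in $\Delta(L\Omega_k)' \cap (N \oo N)$ with
\[
\Delta(L\Omega_k)^\omega p' \subset (N \oo (A \rtimes \G_{(j)}))^\omega \rtimes \G_j.
\]

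Since $\Omega_k \subset \Omega$ gives $\Delta(L\Omega_k) \subset \Delta(L\Omega)$ and $\Delta(L\Omega)' \cap (N\oo N) \subset \Delta(L\Omega_k)' \cap (N \oo N)$, the hypothesis combined with $p' \le p$ also yields
\[
\Delta(L\Omega_k)^\omega p' \subset (N \oo (A \rtimes \G_j))^\omega \rtimes \G_{(j)},
\]
so $\Delta(L\Omega_k)^\omega p'$ is contained in the intersection of these two crossed-product ultrapowers. Adapting the intersection argument used in the proof of Corollary~\ref{reducingintertwininginproducts}, I would verify that the finite-support orthogonal projections on $L^2(N \oo N)$ associated to the two decompositions commute and compose to the projection onto the closed span of $(N \oo A) u_\gamma$ for $\gamma$ ranging over a finite subset of $\G = \G_j \times \G_{(j)}$. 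A triangle-inequality combination of the two approximation conditions then shows that the intersection equals $(N \oo A)^\omega \rtimes \G$, giving $\Delta(L\Omega_k)^\omega p' \subset (N \oo A)^\omega \rtimes \G$.

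Running Proposition~\ref{intertwining} in the opposite direction (with $\Delta(L\Omega_k)$ still a II$_1$ factor) then delivers $\Delta(L\Omega_k) p' \prec_{N \oo N} N \oo A$, which directly contradicts Lemma~\ref{lem:nonintertwining}. The ``in particular'' clause is now an immediate corollary: a witness $p_0 \le p$ in $\Delta(L\Omega)' \cap (N \oo N) \subset \Delta(L\Omega_k)' \cap (N \oo N)$ of an intertwining $\Delta(L\Omega) p \prec_{N \oo N} N \oo (A \rtimes \G_{(j)})$ automatically verifies the same ultrapower containment for $\Delta(L\Omega_k)^\omega p_0$, which the first half has just ruled out. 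The step demanding the most care is checking commutation and composition of the two families of finite-support projections so that the key inequality $\|x - P_{F_j \times F_{(j)}}(x)\|_2 \le \|x - P_{F_{(j)}}(x)\|_2 + \|x - P_{F_j}(x)\|_2$ is valid on $L^2(N \oo N)$; once that is in hand, the remainder is a direct application of the intertwining criteria of Section~\ref{sec:intertwining}.
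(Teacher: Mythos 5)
Your proof is correct and takes essentially the same route as the paper's: argue by contradiction via the ultrapower characterization in Proposition~\ref{intertwining}, intersect the two crossed-product ultrapowers exactly as in Corollary~\ref{reducingintertwininginproducts} to land in $(N\oo A)^{\omega}\rtimes\G$, and conclude by Lemma~\ref{lem:nonintertwining}. (You even correctly identified Lemma~\ref{lem:nonintertwining} as the lemma being contradicted, whereas the paper's own proof contains a typo at that step.)
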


\begin{proof}

We will proceed by contradiction. Assuming that $\Delta(L\Omega_k)p\prec_{N\oo N}N\oo(A\rtimes \G_{(j)})$, by Proposition \ref{intertwining},
there exists $p'$ a nonzero sub-projection of $p$ such that $\Delta(L\Omega)^{\omega}p'\subset (N\oo(A\rtimes \G_{(j)}))^{\omega}\rtimes
\G_{j}$. Combining this with the hypothesis assumption we obtain
\begin{equation*}
\Delta(L\Omega_k)^{\omega}p'\subset [(N\oo(A\rtimes \G_{(j)}))^{\omega}\rtimes \G_{j}]\cap[(N\oo(A\rtimes \G_{j}))^{\omega}\rtimes \G_{(j)}],
\end{equation*}
and hence $\Delta(L\Omega)^{\omega}p'\subset [(N\oo A)^{\omega}\rtimes (\G_1\times \G_2)$. By Proposition \ref{intertwining} again this further
implies that $\Delta(L\Omega_k)^{\omega}p'\prec_{N\oo N}N\oo A$. This however contradicts Lemma \ref{lem:nonconverge} and we are
done.\end{proof}

\begin{lem}\label{lem:unifconv} Using the notation above, if $\hat\dd^j_{\alpha}$ converges to zero uniformly on $(N\oo L\La)_1$ then $(N\oo L\La)^{\omega}\subset (N\oo(A\rtimes \G_{(j)}))^{\omega}\rtimes
\G_{j}$.
\end{lem}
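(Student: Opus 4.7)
The plan is to expand elements of $(N\oo L\La)^\omega$ in the $\G_j$-Fourier direction and use the hypothesis to control the tail. Using the product decomposition $\G=\G_j\times\G_{(j)}$ we identify $N\oo N=(N\oo(A\rtimes\G_{(j)}))\rtimes \G_j$, with $\G_j$ acting only on the second tensor factor. Each $y\in N\oo N$ then admits a $\G_j$-Fourier expansion $y=\sum_{\g\in \G_j} y_\g\,(1\otimes u_\g)$ with $y_\g\in N\oo(A\rtimes \G_{(j)})$, and an element of $(N\oo N)^\omega$ lies in $(N\oo(A\rtimes \G_{(j)}))^\omega\rtimes \G_j$ iff its $\G_j$-Fourier tail vanishes in the ultrapower $\|\cdot\|_2$-norm (as in the Notations section). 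Because $c^j$ vanishes on $\G_{(j)}$, the derivation $\hat\dd^j=0\otimes \dd^j$ vanishes on $N\oo(A\rtimes\G_{(j)})$, so its resolvent $\hat\rho^j_\alpha=\id\otimes \rho^j_\alpha$ restricts to the identity on $N\oo(A\rtimes \G_{(j)})$ and satisfies $\hat\rho^j_\alpha(1\otimes u_\g)=\phi_\alpha(\g)(1\otimes u_\g)$ with $\phi_\alpha(\g)=\alpha/(\alpha+\|c^j(\g)\|^2)$ for $\g\in \G_j$. A direct computation then yields
\[
\|y-\hat\rho^j_\alpha(y)\|_2^2=\sum_{\g\in \G_j}\Big(\frac{\|c^j(\g)\|^2}{\alpha+\|c^j(\g)\|^2}\Big)^{\!2}\|y_\g\|_2^2.
\]

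Take $x=(x_n)_n\in (N\oo L\La)^\omega$ with $\|x_n\|_\infty\le 1$ and denote by $x_{n,\g}$ its $\G_j$-Fourier coefficients. The hypothesis together with inequality (\ref{71}) implies that $\|x_n-\hat\rho^j_\alpha(x_n)\|_2\to 0$ uniformly in $n$ as $\alpha\to\infty$. Since $c^j$ is an unbounded cocycle into a representation mixing relative to $\G_{(j)}$, the standard dichotomy for cocycles into mixing representations forces $c^j$ to be proper modulo $\G_{(j)}$, so for each $M>0$ the set $F_M=\{\g\in \G_j:\|c^j(\g)\|\le M\}$ is finite. For $\g\notin F_M$ one has $\|c^j(\g)\|^2/(\alpha+\|c^j(\g)\|^2)\ge M^2/(\alpha+M^2)$, hence the above formula gives
\[
\sum_{\g\notin F_M}\|x_{n,\g}\|_2^2\le \Big(\frac{\alpha+M^2}{M^2}\Big)^{\!2}\|x_n-\hat\rho^j_\alpha(x_n)\|_2^2.
\]

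To conclude, fix $\e>0$; by the uniform convergence choose $\alpha$ large enough so that $\|x_n-\hat\rho^j_\alpha(x_n)\|_2<\e/2$ for all $n$, and set $M=\sqrt\alpha$. The bound then reads $\sum_{\g\notin F_M}\|x_{n,\g}\|_2^2<\e^2$ for all $n$, so $x$ is uniformly approximated by its finite $\G_j$-Fourier truncation supported on the finite set $F_M$; passing to $n\to\omega$ and letting $\e\to 0$ yields $x\in(N\oo(A\rtimes \G_{(j)}))^\omega\rtimes \G_j$. Beyond this elementary Fourier computation, the main technical point is the properness of $c^j$ modulo $\G_{(j)}$, which is what upgrades uniform smallness of $\hat\dd^j_\alpha$ into a $\G_j$-support statement at the level of the ultrapower.
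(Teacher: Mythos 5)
Your reduction to a $\G_j$-Fourier support statement and the resolvent computation $\|y-\hat\rho^j_\alpha(y)\|_2^2=\sum_{\g\in\G_j}\bigl(\|c^j(\g)\|^2/(\alpha+\|c^j(\g)\|^2)\bigr)^2\|y_\g\|_2^2$ are both fine, but the step that makes the argument run --- ``the standard dichotomy for cocycles into mixing representations forces $c^j$ to be proper modulo $\G_{(j)}$'' --- is false, and in fact fails for \emph{every} group in $\mathcal{CR}$. There is no bounded-or-proper dichotomy for cocycles into mixing representations: the Bass--Serre cocycle on $\Z\ast SL(3,\Z)$ into $\ell^2$ of the group is unbounded yet vanishes identically on the infinite subgroup $SL(3,\Z)$. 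Worse, the very definition of $\mathcal{CR}$ rules out properness: since $(\G_j,\Omega_j)$ has relative property (T) with $\Omega_j$ infinite, \emph{every} $1$-cocycle on $\G_j$ is bounded on $\Omega_j$ (by the Delorme--Guichardet/Jolissaint characterization of relative property (T)), so $F_M=\{\g\in\G_j:\|c^j(\g)\|\le M\}$ contains the infinite set $\Omega_j$ once $M$ is large. Your tail estimate $\sum_{\g\notin F_M}\|x_{n,\g}\|_2^2\le 4\,\|x_n-\hat\rho^j_\alpha(x_n)\|_2^2$ then bounds the Fourier mass outside an \emph{infinite} set, which says nothing about membership in $(N\oo(A\rtimes\G_{(j)}))^\omega\rtimes\G_j$: by the characterization recalled in the Notations, that requires approximation by the projections $P_F$ with $F\subset\G_j$ \emph{finite}. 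Indeed, uniform smallness of $\hat\dd^j_\alpha$ can never by itself localize $\G_j$-supports, since $\hat\dd^j_\alpha$ vanishes identically on the infinite-dimensional algebra generated by $\{1\otimes u_\g \ | \ \g\in\Omega_j\}$.

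The missing input is exactly what the paper's proof supplies. There one reduces, via Proposition \ref{intertwining} and the argument of Corollary \ref{reducingintertwininginproducts}, to showing $(N\oo L\La)q\prec_{N\oo N}N\oo(A\rtimes\G_{(j)})$ for every central projection $q$ of $N\oo L\La$, and argues by contradiction: a failure of intertwining produces unitaries $u^k_n\in \mathcal U(N\oo L\La)$ whose expectations onto $N\oo(A\rtimes\G_{(j)})$ vanish asymptotically while $\|\hat\dd^j_\alpha(u^k_n)\|$ is uniformly small, and Theorem \ref{derivunifconv} (whose engine is the relative mixingness of the bimodule, Lemma \ref{orthog}) then forces $\hat\dd^j_\alpha$ to converge uniformly on $(1\oo B)_1$, hence on all of $(N\oo N)_1$, contradicting the unboundedness of $c^j$. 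It is this interplay between the deformation, the mixing bimodule, and the second decomposition $N=B\rtimes\La$ --- not any properness of the cocycle --- that carries the lemma; your Fourier computation is correct as far as it goes but cannot close without it.
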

\begin{proof} By proof of Corollary \ref{reducingintertwininginproducts}, to get our conclusion it suffices to prove that for every $q\in (N\oo L\La)'\cap (N\oo N)= \mathcal Z(N\oo L\La)$ we have
$(N\oo L\La)q\prec_{N\oo N} N\oo(A\rtimes \G_{(j)})$.

To show this we proceed by contradiction so assume that there exists $q_o\in \mathcal Z(N\oo L\La)$ such that $(N\oo L\La)q_o\nprec_{N\oo N}
N\oo(A\rtimes \G_{(j)})$. Also since $\hat\dd^j_{\alpha}$ converges to zero uniformly on $(N\oo L\La)_1$ then applying Corollary 2.3 in
\cite{popa2004}, for every $k\in \mathbb N$ there exists $\alpha_k>0$ and an infinite sequence of elements $\{u^k_n\ | \ n\in \mathbb
N\}\subset \mathcal U(N\oo L\La)$ such that
\begin{enumerate}
\item $\|\hat \dd^j_\alpha(u^k_n)\|\le\frac{1}{k}\text{ for all } \alpha>\alpha_k;$
\item $\|E_{N\oo(A\rtimes \G_{(j)})}(x u^k_nq_oy)\|_2\ra 0\text{ as } n\ra\infty \text{ for all }x,y\in N\oo N.$
\end{enumerate}
\noindent Then applying Theorem \ref{derivunifconv} for $\G=\G_j$, $\Sigma$ trivial, $M=N\oo N=(N\oo (A\rtimes \G_{(j)}))\rtimes \G_j$, $B=1\oo
B $, and $r=q_o$ we obtain that $\hat \dd^j_\alpha$ converges to zero uniformly on $(1 \oo B)_1$. Applying Proposition \ref{70} we obtain that
$\hat \dd^j_\alpha$ converges to zero uniformly on $(\mathcal U( 1 \oo B)\mathcal U(N\oo L\La))$ and hence on $(N\oo N)_1$ which is obviously a
contradiction.
\end{proof}

\begin{lem}\label{lem:notcontained}
Using the notation above assume that for every $1\le j\le 2$ let $p_j\in (N\oo L\La)'\cap (N\oo N)=1\oo (L\La'\cap N) $ be the maximal
projection such that $\hat\dd^j_{\alpha}$ converges to zero uniformly on $p_j(N\oo L\La)_1$. Then for every $1\le j\le 2$ we have that $p_j\neq
1$ and $\Delta(L\Omega)(1-p_j) \nprec_{N \oo N} N \oo (A \rtimes \G_{j})$.
\end{lem}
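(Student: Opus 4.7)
The plan is to argue both conclusions by contradiction, invoking Lemma~\ref{lem:unifconv}, Lemma~\ref{lem:nonconverge}, and Lemma~\ref{lem:nonintertwining}.

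For the first conclusion, suppose $p_j=1$. Then Lemma~\ref{lem:unifconv} gives the containment $(N\oo L\La)^{\omega}\subset (N\oo(A\rtimes\G_{(j)}))^{\omega}\rtimes\G_j$, and since $\Delta(L\Omega)\subset N\oo L\La$ this specializes to $\Delta(L\Omega)^{\omega}\subset (N\oo(A\rtimes\G_{(j)}))^{\omega}\rtimes\G_j$. By Proposition~\ref{intertwining} we deduce $\Delta(L\Omega)\prec_{N\oo N} N\oo(A\rtimes\G_{(j)})$. The strategy is then to reduce this intertwining to $\Delta(L\Omega_j)\prec_{N\oo N} N\oo A$ via an intertwining-with-commutants argument that exploits the fact that inside $N\oo N$ the factor $\Delta(L\Omega_j)$ commutes with $\Delta(L\G_{(j)})$, so that the right-hand side can be shrunk from $A\rtimes\G_{(j)}$ down to its relative commutant $A$ inside $A\rtimes\G$. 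This contradicts Lemma~\ref{lem:nonintertwining}.

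For the second conclusion, suppose $\Delta(L\Omega)(1-p_j)\prec_{N\oo N} N\oo(A\rtimes\G_j)$. Proposition~\ref{intertwining} supplies a nonzero projection $q\le 1-p_j$ in $\Delta(L\Omega)'\cap(N\oo N)$ with $\Delta(L\Omega)^{\omega}q\subset (N\oo(A\rtimes\G_j))^{\omega}\rtimes\G_{(j)}$. Lemma~\ref{lem:nonconverge} (with its index matching our $j$) then yields $\Delta(L\Omega_k)q\nprec_{N\oo N} N\oo(A\rtimes\G_{(j)})$ for every $k\in\{1,2\}$, and by passage to the supalgebra $N\oo L\La\supset\Delta(L\Omega_k)$, also $(N\oo L\La)q\nprec_{N\oo N} N\oo(A\rtimes\G_{(j)})$. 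Popa's criterion produces an infinite sequence of unitaries $u^k_n\in\mathcal U(N\oo L\La)\subset\mathcal N_{N\oo N}(1\oo B)$ witnessing this non-intertwining. One then invokes Theorem~\ref{derivunifconv} applied to $\hat\dd^j$ with $\Sigma=\G_{(j)}$, $M=N\oo N$, $B=1\oo B$, and $r=q$. Since $1\oo B$ is semiregular in $N\oo N$, the conclusion is uniform convergence of $\hat\dd^j_\alpha$ on $(1\oo B)_1$, which by Proposition~\ref{70} propagates through the normalizer to $(N\oo L\La)_1$, forcing $p_j=1$ and contradicting $q\le 1-p_j\ne 0$.

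The hardest step is the verification of the decay hypothesis $\|\hat\dd^j_\alpha(u^k_n)\|\le 1/k$ for large $\alpha$, required by Theorem~\ref{derivunifconv}. Since $p_j=0$ gives no uniform convergence of $\hat\dd^j_\alpha$ on $(N\oo L\La)_1$, this bound must be extracted from the specific containment produced by Proposition~\ref{intertwining} together with Lemma~\ref{convergence}, by a careful modification of the unitaries to lie in the ultrapower algebra on which $\hat\dd^j_\alpha$ converges pointwise. A secondary obstacle is the intertwining-with-commutants reduction needed for the first conclusion, which must carefully track how the relative commutant of $\Delta(L\G_{(j)})$ inside $N\oo(A\rtimes\G_{(j)})$ collapses onto $N\oo A$.
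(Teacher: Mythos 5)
Your overall architecture does not match what the supporting lemmas can actually deliver, and both halves have a genuine gap. For the first conclusion: assuming $p_j=1$ for a single fixed $j$ only yields the one containment $(N\oo L\La)^{\omega}\subset (N\oo(A\rtimes\G_{(j)}))^{\omega}\rtimes\G_j$, hence $\Delta(L\Omega)\prec_{N\oo N}N\oo(A\rtimes\G_{(j)})$, and there is no contradiction to be extracted from this alone. Your proposed ``intertwining-with-commutants'' shrinking of the target from $N\oo(A\rtimes\G_{(j)})$ down to $N\oo A$ is not a valid move: knowing $P\prec Q$ and that $P$ commutes with some algebra does not let you replace $Q$ by a relative commutant, and Lemma~\ref{lem:nonintertwining} is only contradicted by intertwining into $N\oo A$ itself. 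What the argument actually gives is that $p_1$ and $p_2$ cannot \emph{both} equal $1$: intersecting the two containments $(N\oo(A\rtimes\G_{(1)}))^{\omega}\rtimes\G_1$ and $(N\oo(A\rtimes\G_{(2)}))^{\omega}\rtimes\G_2$ collapses to $(N\oo A)^{\omega}\rtimes\G$. The paper therefore takes, say, $p_1\neq1$ without loss of generality, proves the non-intertwining statement for $j=1$ first, and only then deduces $p_2\neq1$ by running Lemma~\ref{lem:unifconv} in contrapositive, using that $\Delta(L\Omega)^{\omega}(1-p_1)\nsubseteq(N\oo(A\rtimes\G_1))^{\omega}\rtimes\G_2$ forces $(N\oo L\La)^{\omega}\nsubseteq(N\oo(A\rtimes\G_1))^{\omega}\rtimes\G_2$. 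This bootstrapping order is essential and is absent from your plan.

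For the second conclusion the fatal point is exactly the one you flag and then leave open: Theorem~\ref{derivunifconv} needs unitaries $v^k_n$ with $\|\hat\dd^j_\alpha(v^k_n)\|\le 1/k$ for all $\alpha>\alpha_k$ \emph{uniformly in} $n$, and you propose to draw them from $\UU(N\oo L\La)$ cut by $q\le 1-p_j$, which is precisely the corner on which, by the maximality of $p_j$, no such uniform decay is available. Lemma~\ref{convergence} cannot rescue this: it gives $\lim_\alpha\lim_{n\ra\omega}\|\dd_\alpha(x_n)\|=0$ with a rate depending on the element, not a single $\alpha_k$ valid for an entire infinite family. The paper's route is different: it takes the witnessing unitaries from $\Delta(\UU(L\Omega))$ --- where the uniform decay comes for free, since relative property (T) of $(\G,\Omega)$ makes the inclusion $\Delta(L\Omega)\subset N\oo N$ rigid --- applies Theorem~\ref{derivunifconv} with $B=\Delta(A)$ (not $1\oo B$) and with the non-intertwining into $N\oo(A\rtimes\G_{(j)})$ supplied by Lemma~\ref{lem:nonconverge}, and then upgrades uniform convergence step by step: to $r_1(\Delta(L\G_i))_1$ via Theorem~\ref{unifconvonnormalizer} and relative property (T) of $(\G_i,\Omega_i)$, then to $\Delta(L\G)$ and $\Delta(N)$, and finally to a nonzero corner of $(N\oo L\La)_1$ below $1-p_j$ using $N\oo L\La\subset(N\oo 1)\vee\Delta(N)$ together with the vanishing of $\hat\dd^j$ on $N\oo 1$; the contradiction is then with the maximality of $p_j$. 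Without this replacement of the witnessing family your argument does not close.
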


\begin{proof} First, we notice that if relations $p_1 =1$ and $p_2=1$ would hold simultaneously then by the previous lemma we have that $(N\oo L\La)^{\omega}\subset (N\oo(A\rtimes \G_{(j)}))^{\omega}\rtimes
\G_{j}$ for every $1\le j\le 2$. This would further imply that
\begin{eqnarray*}(N\oo L\La)^{\omega}\subset [(N\oo(A\rtimes \G_{1}))^{\omega}\rtimes
\G_{2}]\cap[(N\oo(A\rtimes \G_{2}))^{\omega}\rtimes \G_{1}]=(N\oo A)^{\omega}\rtimes (\G_1\times \G_{2}),\end{eqnarray*}

\noindent which by Proposition \ref{intertwining} gives that $N\oo L\La\prec_{N\oo N} N\oo A$. This however would contradict Lemma
\ref{lem:nonintertwining}. Hence we can assume without loss of generality that $p_1\neq 1$, or equivalently, $0<1-p_1$.

Next, we proceed by contradiction to show that $\Delta(L\Omega)(1-p_1) \nprec_{N \oo N} N \oo (A \rtimes \G_{1})$. So assume that
$\Delta(L\Omega)(1-p_1) \prec_{N \oo N} N \oo (A \rtimes \G_{1})$ and by Proposition \ref{intertwining} there exists a non-zero projection
$r_1\in\Delta(L\Omega)'\cap N\oo N$ with $r_1\le 1-p_1$ such that
\begin{eqnarray*} \Delta(L\Omega)^{\omega}r_1 \subset (N\oo (A \rtimes \G_{1}))^{\omega}\rtimes \G_{2}. \end{eqnarray*}

Since the pair $(\G, \Omega)$ has relative property (T) then the inclusion $\Delta(L\Omega)\subset N\oo N$ is rigid and therefore $\hat
\dd^1_\alpha$ converge uniformly to zero on $(\Delta(L\Omega))_1$. Also by Lemma \ref{lem:nonconverge} we have that $\Delta(L\Omega)r_1
\nprec_{N \oo N} N \oo (A \rtimes \G_{2})$ and applying Corollary 2.3 in \cite{popa2004}, for every $k\in \mathbb N$ there exists $\alpha_k>0$
and an infinite sequence of elements $\{\l^k_n|n\in \mathbb N\}\subset \Omega$ such that
\begin{enumerate}
\item $\|\hat \dd^1_\alpha(\Delta(u_{\l^k_n}))\|\le\frac{1}{k}\text{ for all } \alpha>\alpha_k;$
\item $\|E_{N\oo(A\rtimes \G_{2})}(x\Delta(u_{\l^k_n})r_1y)\|_2\ra 0\text{ as } n\ra\infty \text{ for all }x,y\in N\oo N.$
\end{enumerate}

\noindent Then applying Theorem \ref{derivunifconv} for $\G=\G_1$, $\Sigma$ trivial, $M=N\oo N=(N\oo (A\rtimes \G_{2}))\rtimes \G_1$,
$B=\Delta(A)$, and $r=r_1$ there exists a nonzero projection $q_1\in \mathcal Z(\mathcal N_{N\oo N }(\Delta(A))'') $ with $r_1q_1\neq 0$ such
that $\hat \dd^1_\alpha$ converges to zero uniformly on $q_1(\Delta(A))_1$.

Next, we claim that $\hat\dd^1_{\alpha}$ converges to zero uniformly on $r_1(\Delta(L\G_j))_1$ for every $1\le j\le 2$. For the proof just
notice that since the pair $(\G_j, \Omega_j)$ has relative property (T) the inclusion $(\Delta(L\Omega_j)\subset N\oo N)$ is rigid and
$\hat\dd^1_{\alpha}$ converges uniformly to zero on $(\Delta(L\Omega_j))_1$. Also, by Lemma \ref{lem:nonconverge} for all $1\le j\le 2$ we have
$\Delta(L\Omega_j)r_1 \nprec_{N \oo N} N \oo (A \rtimes \G_{2})$ and therefore the conclusion follows from Theorem \ref{unifconvonnormalizer}.

Moreover, applying the same maximality argument as in the proof of Theorem 3.2, one can find a projection $r'\in \mathcal Z(\Delta(L\G)'\cap
N\oo N)$ with $r'q_1\neq0$ such that $\hat\dd^1_{\alpha}$ converges to zero uniformly on $r'(\Delta(L\G_j))_1$ for every $1\le j \le 2$.
 Since $r'$ commutes with $\Delta (L\G_j)$ for every $1\le j\le 2$, using Proposition \ref{70} we obtain that $\hat\dd^i_{\alpha}$ converges
to zero uniformly on $r'(\mathcal U(\Delta(L\G_1)\mathcal U(\Delta(L\G_2))$ and therefore on $r'((\Delta(L\G))_1$.

Since $\hat\dd^1_{\alpha}$ converges uniformly to zero on $q(\Delta(A))_1$  and $r'$ commutes with $q_1$ then applying Proposition \ref{70} one
more time we obtain that $\hat \dd^1_\alpha$ converges uniformly on $r'q_1(\mathcal U(\Delta(L\G))\mathcal U(\Delta(A)))$ and hence on
$r'q_1(\Delta(N))_1$.

Also, by definition, we have that $\hat \dd^1_\alpha$ converges uniformly to zero on $(N \oo 1)_1$ and therefore by Proposition \ref{70} we
conclude that $\hat \dd^1_\alpha$ converges to zero uniformly on $r'q_1(N\oo L\La)_1$. Since $r'\le 1-p_1$ we have that $0<r'q_1\le 1-p_1$.
Applying the same maximality argument from the proof of Theorem \ref{derivunifconv} one can find a projection $r''\in \mathcal (N\oo L\La)'\cap
N\oo N $ with $r''\le1-p_1$ such that $\hat \dd^1_\alpha$ converges to zero uniformly on $r''(N\oo L\La)_1$. Altogether this gives that $\hat
\dd^1_\alpha$ converges to zero uniformly on $(p_1+r'')(N\oo L\La)_1$ which contradicts the maximality of $p_1$. Therefore we have that
$\Delta(L\Omega)(1-p_1) \nprec_{N \oo N} N \oo (A \rtimes \G_{1})$.

Notice that by Proposition \ref{intertwining} this implies that $(\Delta (L\Omega))^{\omega}(1-p_1)\nsubseteq (N\oo(A\rtimes
\G_{1}))^{\omega}\rtimes \G_{2}$ and hence $(N\oo L\La)^{\omega}\nsubseteq (N\oo(A\rtimes \G_{1}))^{\omega}\rtimes \G_{2}$. By Proposition
\ref{lem:unifconv} this further implies that $\hat \dd^{2}_\alpha$ does not converge to zero uniformly on $(N\oo L\La)_1$ and hence $p_{2}\neq
1$. Proceeding as above we then have that $\Delta(L\Omega)(1-p_{2}) \nprec_{N \oo N} N \oo (A \rtimes \G_{2})$.\end{proof}

These results on Hilbert bimodules and intertwining are used to prove a ``transfer lemma'' \'{a} la Popa-Vaes (Lemma 3.2 in \cite{popavaes09})
that will be of essential use in the proof of this paper's main result. Roughly speaking, the lemma states that the presence of a rigid part on
the source group can be transferred, at the level of resolvent deformations, to ``large'' subsets of the target group. The proof is essentially
the same as in \cite{popavaes09}, the main difference being that here we use Lemma~\ref{lem:notcontained} in place of
Lemma~\ref{lem:diagonalbimodule} used by Popa and Vaes.

\begin{lem}\label{transfer} Let $\G_1,\G_2 \in \mathcal{CR}$, $\G = \G_1 \times
\G_2$ and let $\G \car A$ a trace preserving action on an abelian von Neumann algebra $A $. Let $B$ be an abelian algebra and $\La \car B$ be a
free action such that $N = A \rtimes \G = B \rtimes \La$. Then for every $1 \le j \le 2$, $k\in \mathbb N $, there exists $\alpha_k>0$, and an
infinite set of elements $S_k\subset \La$, such that
\begin{eqnarray}
\label{6}&&\|\dd^i_{\alpha}(v_{\l})\|\le \frac{1}{k} \text{ for all }\alpha>\alpha_k, \l\in S, 1 \le i \le 2; \\
\label{5} &&\|E_{A\rtimes \G_{j}}(xv_{\l}(1-p_j)y)\|_2 \ra 0 \text{ for all } x,y\in N \text{ as }\l\ra\infty,
\end{eqnarray}
where $p_j$ is defined as in Lemma \ref{lem:notcontained}.
\end{lem}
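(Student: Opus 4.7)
The plan is to adapt the proof of Lemma~3.2 in \cite{popavaes09} almost verbatim, substituting Lemma~\ref{lem:notcontained} in place of the non-intertwining input that Popa and Vaes derive from their analog of Lemma~\ref{lem:diagonalbimodule}. The strategy is to package the relative property~(T) of $(\G,\Omega)$ into uniform control of the deformations $\hat\dd^i_{\alpha}$ on $\Delta(L\Omega)$, combine this with the non-intertwining of $\Delta(L\Omega)(1-p_j)$ via Popa's analytic intertwining criterion, and then transfer the resulting quantitative bounds across $\Delta$ by means of the $\La$-Fourier expansion of $L\La\subset N$.

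Since $(\G,\Omega)$ has relative property~(T) and $\Delta:N\to N\oo N$ is a trace preserving unital $*$-homomorphism, the inclusion $\Delta(L\Omega)\subset N\oo N$ is rigid. Combined with the inequality $\|\hat\dd^i_{\alpha}(w)\|\le\|w-\hat\rho^i_{\alpha}(w)\|_2^{1/2}$ from (2.7), this yields that $\hat\dd^i_{\alpha}$ converges to zero uniformly on $(\Delta(L\Omega))_1$ for both $i=1,2$. Applying Popa's analytic intertwining criterion (Corollary~2.3 of \cite{popa2004}) to $\Delta(L\Omega)(1-p_j)\nprec_{N\oo N} N\oo(A\rtimes\G_j)$ from Lemma~\ref{lem:notcontained} produces unitaries $w_n\in\mathcal U(\Delta(L\Omega))$ with $\|E_{N\oo(A\rtimes\G_j)}(Xw_n(1-p_j)Y)\|_2\to 0$ as $n\to\infty$ for every $X,Y\in N\oo N$. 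A standard diagonal extraction gives, for each $k\in\mathbb N$, some $\alpha_k>0$ and a sequence $w_n^{(k)}\in\mathcal U(\Delta(L\Omega))$ satisfying both $\|\hat\dd^i_{\alpha}(w_n^{(k)})\|\le 1/k$ for all $\alpha>\alpha_k$ and $i=1,2$, as well as the above decay of the conditional expectation.

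Writing $w_n^{(k)}=\Delta(z_n^{(k)})$ for $z_n^{(k)}\in\mathcal U(L\Omega)$ and expanding in the $\La$-Fourier basis, $z_n^{(k)}=\sum_{\l\in\La}b_{\l}v_{\l}$ with $b_{\l}\in B$, we have $w_n^{(k)}=\sum_{\l}b_{\l}v_{\l}\otimes v_{\l}$. Using that $\hat\dd^i=0\oo\dd^i$ acts only on the second tensor factor and that $\tau((bv_{\l})^*(b'v_{\mu}))=\dd_{\l,\mu}\tau(b^*b')$, the $\La$-Fourier modes are mutually orthogonal and yield
\begin{equation*}
\sum_{\l\in\La}\|b_{\l}\|_2^2\,\|v_{\l}-\rho^i_{\alpha}(v_{\l})\|_2^2=\|w_n^{(k)}-\hat\rho^i_{\alpha}(w_n^{(k)})\|_2^2\le\|\hat\dd^i_{\alpha}(w_n^{(k)})\|^2\le\frac{1}{k^2}.
\end{equation*}
Writing $p_j=1\otimes\tilde p_j$ with $\tilde p_j\in L\La'\cap N$ and testing with $X=1\otimes x$, $Y=1\otimes y$, the same orthogonality gives
\begin{equation*}
\|E_{N\oo(A\rtimes\G_j)}((1\otimes x)w_n^{(k)}(1-p_j)(1\otimes y))\|_2^2=\sum_{\l\in\La}\|b_{\l}\|_2^2\,\|E_{A\rtimes\G_j}(xv_{\l}(1-\tilde p_j)y)\|_2^2,
\end{equation*}
and the left-hand side tends to zero as $n\to\infty$ for every $x,y\in N$.

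Since $\sum_{\l}\|b_{\l}\|_2^2=\|z_n^{(k)}\|_2^2=1$, a Markov-type averaging in these weighted sums yields, for each $k$ and each finite collection of test vectors $(x,y)$ taken from a countable dense family of $N$, many $\l\in\La$ on which simultaneously $\|\dd^i_{\alpha}(v_{\l})\|\le 1/k$ for $\alpha>\alpha_k$, $i=1,2$, and $\|E_{A\rtimes\G_j}(xv_{\l}(1-p_j)y)\|_2$ is small. A standard diagonal extraction over $k$ and over the exhausting test data then produces the required infinite set $S_k$; infiniteness is ensured by the fact that the $n\to\infty$ decay of the conditional expectation forces the $\La$-Fourier support of $w_n^{(k)}(1-p_j)$ to leave every finite subset of $\La$. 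The main technical obstacle is verifying the two orthogonal $\La$-Fourier decompositions above: this rests on $\hat\dd^i$ only acting on the second tensor factor of $N\oo N$, so that $\hat\rho^i_{\alpha}$ diagonalizes against the system $\{v_{\l}\otimes v_{\l}\}_{\l\in\La}$, and on $p_j\in 1\oo(L\La'\cap N)$, so that the operation $X\mapsto E_{N\oo(A\rtimes\G_j)}(\cdot X(1-p_j)\cdot)$ also respects this decomposition.
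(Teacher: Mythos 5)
Your proposal shares its first half with the paper's proof --- rigidity of $\Delta(L\Omega)\subset N\oo N$ giving uniform smallness of $\hat\dd^i_\alpha$ on $(\Delta(L\Omega))_1$, followed by the $\La$-Fourier orthogonality computation across $\Delta$ --- but the second half is organized differently. The paper fixes $S_k=S^1_k\cap S^2_k$ with $S^i_k=\{\l:\|\dd^i_\alpha(v_\l)\|\le 1/k\}$ once and for all, shows every $w\in\UU(L\Omega)$ carries at least $3/4$ of its Fourier mass on $S_k$, and then proves (\ref{5}) for this full level set by contradiction: a failure of (\ref{5}) produces, via the vector $\xi=\sum_{z}1\otimes(z^*e_{A\rtimes\G_j}z)$ and the minimal-norm element of a convex hull, a nonzero $\Delta(L\Omega)(1-p_j)$-central vector in $L^2N\oo L^2\langle N,e_{A\rtimes\G_j}\rangle$, contradicting Lemma~\ref{lem:notcontained}. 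You instead run Lemma~\ref{lem:notcontained} ``forward'' through Popa's analytic criterion to obtain witnessing unitaries $\Delta(z_n)\in\UU(\Delta(L\Omega))$ (note $p_j\in(N\oo L\La)'\cap(N\oo N)\subset\Delta(L\Omega)'\cap(N\oo N)$, so the corner formulation applies) and then extract $S_k$ by a Markov/diagonal argument from the two weighted sums. Both routes are legitimate; yours avoids the central-vector construction at the cost of producing only a selected infinite subset rather than the full level set, which still suffices for the lemma as stated and for its use in Theorem~\ref{mainintertwining}, while the paper's version yields the slightly stronger statement that (\ref{5}) holds on all of $S^1_k\cap S^2_k$.

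Two small repairs are needed. First, passing through $\|v_\l-\rho^i_\alpha(v_\l)\|_2$ and converting back via $\|\dd^i_\alpha(v_\l)\|\le\|v_\l-\rho^i_\alpha(v_\l)\|_2^{1/2}$ only yields a bound of order $k^{-1/2}$ on the Markov good set, not $1/k$; either compute $\|\hat\dd^i_\alpha(\Delta(w))\|^2=\sum_\l\|w_\l\|_2^2\|\dd^i_\alpha(v_\l)\|^2$ directly, as the paper does, or choose $\alpha_k$ from rigidity at level $1/(2k^2)$ and reindex. Second, your infiniteness claim needs one more line: if the extracted elements were confined to a finite subset of $\La$, some $\l_0$ would satisfy $E_{A\rtimes\G_j}(xv_{\l_0}(1-\tilde p_j)y)=0$ for all $x,y\in N$, which is impossible because $E_{A\rtimes\G_j}(v_{\l_0}(1-\tilde p_j)v_{\l_0}^*)$ is a positive element of trace $\tau(1-p_j)>0$, using $p_j\neq 1$ from Lemma~\ref{lem:notcontained}. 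With those adjustments the argument is complete.
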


\begin{proof}

Fix $1 \le j \le 2$ and $k\in \mathbb N > 0$. Since the pair $(\G, \Omega)$ has relative property (T) then we also have that the inclusion
$(L\Omega \subset L\G)$ is rigid, and hence so is the inclusion $(\Delta(L\Omega) \subset N \oo N)$.

Since $(\Delta(L\Omega) \subset N \oo N)$ is rigid there exists $\alpha_k > 0$ such that for all $\alpha > \alpha_k$ we have
\begin{equation}\label{1}
\| \hat \dd^i_\alpha \circ \Delta(x) \| \le \frac{1}{k\sqrt{8}}, {\rm \ for \ all \ } x \in (L\Omega)_1 \text{ and } 1\le i\le 2.
\end{equation}

Fix $w\in L\Omega$ a unitary and let $w=\sum_{\l\in \La} w_{\l}v_{\l}$ with $w_{\l}\in B$ be its Fourier expansion in $B\rtimes \La=N$.  If for
every $1\le i\le 2$ we denote by $S^i_k=\{\l\in \La \ | \ \|\dd_{\alpha}^i (v_{\l})\|_2\le \frac{1}{k} \}$, then continuity of $\hat
\dd^i_{\alpha}$, together with (\ref{1}), implies that for all $\alpha > \alpha_k$ we have
\begin{eqnarray*}
\frac{1}{8k^2}&\ge & \| \hat \dd^i_{\alpha}\circ \Delta(w)\|^2\\
&=& \|\hat \dd^i_{\alpha}(\sum_{\l\in \La}(w_{\l}v_{\l}\otimes v_{\l} ))\|^2\\
&=& \|\sum_{\l\in \La}(w_{\l}v_{\l})\otimes \dd^i_{\alpha}(v_{\l}) \|^2\\
&=& \sum_{\l\in \La}\|\dd^i_{\alpha}(v_{\l})\|^2\|w_{\l}\|^2_2\\
&\ge& \frac{1}{k^2}\sum_{\l\in \La\setminus S^i_k}\|w_{\l}\|^2_2.
\end{eqnarray*}
Therefore, we have that $\sum_{\l\in \La\setminus S^i_k}\|w_{\l}\|^2_2\le \frac{1}{8}$ and since $w$ is a unitary, if we denote by $S_k =
S^1_k\cap S^2_k$ then we conclude that for all $w\in \mathcal U(L\Omega)$ we have
\begin{equation}\label{3}
\sum_{\l\in S_k}\|w_{\l}\|^2_2\ge \frac{3}{4}.
\end{equation}

If (\ref{5}) does not hold for this $S_k$ then there exists a finite subset $F\subset N$ and $c>0$ such that
\begin{equation}\label{2}
\sum_{z,y\in F}\|E_{A\rtimes \G_{j}}(z^*v_{\l}(1-p_j)y)\|^2_2\ge c\text{ for all }\l\in S_k.
\end{equation}

Let $\mathcal L=L^2N\overline{\otimes} L^2\langle N,e_{A\rtimes\G_{j}} \rangle$ and consider the vector $\xi=\sum_{z\in F} 1\otimes
(z^*e_{A\rtimes \G_{j}}z ) \in \mathcal L$.

Using relations (\ref{3}) and (\ref{2}) we then obtain the following inequalities
\begin{eqnarray*}
\langle \Delta(w)(1-p_j)\xi (1-p_j)\Delta(w^*),\xi\rangle &=& \sum_{\l\in \La}\|w_{\l}\|^2_2(\sum_{z,y\in F}\|E_{A \rtimes \G_{(j)}}(z^*v_{\l}y)(1-p_j)\|^2_2)\\
&\ge & c\sum_{\l\in S_k }\|w_{\l}\|^2_2\ge \frac{3c}{4} \text{ for all }w\in\mathcal{U}(L\Omega).
\end{eqnarray*}

This implies that the unique $\|\cdot\|_{\tau \times Tr}$-minimal vector in the convex hull of $\{\Delta(w)(1-p_j)\xi (1-p_j)\Delta(w^*) \ | \
w\in \mathcal{U}(L\Omega)\}$ is nonzero and $\Delta(L\Omega)(1-p_j)$-central. However this contradicts Lemma~\ref{lem:notcontained}.
\end{proof}

\section{Uniqueness of group measure space Cartan subalgebras}\label{sec:cartan}

In this section we use the above transfer lemmas in combination with the criterion for uniform convergence of the resolvent deformations
 to prove unique group-measure space Cartan results.  More precisely, our main result shows that
any free, ergodic action of any product groups belonging to  $\mathcal{CR}$ gives rise to a von Neumann algebra with a unique group measure
space Cartan subalgebra.

Exploiting techniques from \cite{peterson2009} we prove first the following key intertwining result.

\begin{thm}\label{mainintertwining}
Let $\G_1, \G_2 \in \mathcal{CR}$, $\G = \G_1 \times \G_2$ and let $\G \car A$ a trace preserving action on an abelian von Neumann algebra $A
$. If we assume that $B$ is an abelian von Neumann algebra and $\La \car B$ a free, action such that $N = A \rtimes \G = B \rtimes \La$ then we
have $B \prec_N A$.

\end{thm}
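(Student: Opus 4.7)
The plan is to reduce the statement to showing $B \prec_N A \rtimes \G_{(j)}$ for each $j \in \{1, 2\}$; Corollary~\ref{reducingintertwininginproducts} (case~2), applicable because $B$ is a Cartan subalgebra of the factor $N$ and hence a semiregular MASA, then upgrades this to the desired $B \prec_N A$. I would prove each of these two intertwinings by contradiction using the tools developed in the preceding sections.

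Fix $j \in \{1,2\}$ and assume for contradiction that $B \nprec_N A \rtimes \G_{(j)}$. The transfer Lemma~\ref{transfer} produces, for every $k \in \mathbb N$, a threshold $\alpha_k > 0$ and an infinite subset $S_k \subset \La$ along which the canonical unitaries $v_\l \in \mathcal N_N(B)$ satisfy $\|\dd^j_\alpha(v_\l)\| \le 1/k$ for all $\alpha > \alpha_k$, together with the compression condition controlled by $(1 - p_j)$. Since $\La$ is i.c.c., one checks that $L\La' \cap N = \mathbb C$, so the identification $(N \oo L\La)' \cap (N \oo N) = 1 \oo (L\La' \cap N)$ from Lemma~\ref{lem:notcontained} forces $p_j \in \{0,1\}$; the same lemma rules out $p_j = 1$, whence $p_j = 0$ and the compression condition simplifies to $\|E_{A \rtimes \G_{(j)}}(x v_\l y)\|_2 \to 0$ for all $x,y \in N$.

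Next I would feed these data into Theorem~\ref{derivunifconv}, applied with $M = N$, normal subgroup $\Sigma = \G_{(j)}$ (on which the cocycle underlying $\dd^j$ vanishes and relative to which $\HH_j$ is mixing), $B$ as the abelian algebra, $\mathcal F = \{v_\l : \l \in \La\} \subset \mathcal N_N(B)$, and $r = 1$. Semiregularity of $B$ then yields $\dd^j_\alpha \to 0$ uniformly on $(B)_1$. Theorem~\ref{unifconvonnormalizer}, applied with our contradiction hypothesis $B \nprec_N A \rtimes \G_{(j)}$ and $p = 1$, propagates this uniform convergence from $(B)_1$ to $(\mathcal N_N(B)'')_1 = (N)_1$, contradicting the recalled property that $\dd^j_\alpha$ does not converge uniformly on $(N)_1$ since the underlying cocycle for $\G_j$ is unbounded. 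Hence $B \prec_N A \rtimes \G_{(j)}$, and the reduction to Corollary~\ref{reducingintertwininginproducts} completes the argument.

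The main obstacle is not in the present theorem but was already faced in Lemma~\ref{transfer}: that is where the relative property (T) rigidity of $\Omega \subset \G$ is transported, via the diagonal embedding $\Delta$ and Lemma~\ref{lem:notcontained}, onto explicit infinite subsets of the mysterious group $\La$ on which the resolvent deformations become uniformly small simultaneously for both $j = 1, 2$. Once that transfer is available, the present theorem is a clean synthesis of Theorems~\ref{derivunifconv} and~\ref{unifconvonnormalizer}, mirroring the strategy pioneered in \cite{peterson2009} and upgraded here to handle the product structure via the simultaneous indexing over $j$.
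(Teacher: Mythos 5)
Your proposal is correct and follows essentially the same route as the paper: the transfer Lemma~\ref{transfer} combined with Theorem~\ref{derivunifconv} (viewing $N=(A\rtimes\G_{(j)})\rtimes\G_j$ with trivial $\Sigma$) gives uniform convergence of $\dd^j_\alpha$ on $(B)_1$, Theorem~\ref{unifconvonnormalizer} propagates this to $(\mathcal N_N(B)'')_1=(N)_1$ under the contradiction hypothesis $B\nprec_N A\rtimes\G_{(j)}$, and Corollary~\ref{reducingintertwininginproducts} finishes. The only (cosmetic) deviation is your argument that $p_j=0$ so that one may take $r=1$; the paper instead feeds $r=1-p_j\neq 0$ directly into Theorem~\ref{derivunifconv} and lets the semiregularity clause upgrade the conclusion to all of $(B)_1$, which avoids invoking $L\La'\cap N=\C 1$.
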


\begin{proof}
 Fixing $1\le i\le 2 $ notice that the algebra $N$ can be seen as $(A\rtimes
\G_{(i)})\rtimes \G_i$ with $\G_i$ acting trivially on $\G_{(i)}$. Therefore applying the transfer Lemma \ref{transfer} together with
Theorem~\ref{derivunifconv} (for $\Sigma=\{e\}$)  we have that $\dd_\alpha^i$ converges to zero uniformly on $(B)_1$. Next, we proceed by
contradiction to show that $B\prec_N  A \rtimes \G_{(i)}$. Assuming $B\nprec_N A\rtimes \G_{(i)}$, by Popa's intertwining techniques (see
Corollary 2.3. in \cite{popa2004}) there exists a sequence of unitaries $b_n\in \mathcal{U}(B)$ such that $\|E_{A\rtimes
\G_{(i)}}(xb_ny)\|_2\ra 0$ as $n\ra \infty$. Since $\G_{i} \in\mathcal{CR}$ and $\dd^{i}_{\alpha}$ converges uniformly to zero on $(B)_1$,
Theorem \ref{unifconvonnormalizer} implies  that $\dd^{i}_{\alpha}$ converges to zero uniformly on the unit ball of $\mathcal N_M(B)''=N$ which
is obviously a contradiction. Hence for all $1 \le i \le 2$ we have that $B \prec_N A \rtimes \G_{(i)}$ and since $B$ is a Cartan subalgebra of
$N$ then Corollary~\ref{reducingintertwininginproducts} implies that $B \prec_N A$.
\end{proof}

An immediate consequence of the previous theorem is the following:

\begin{cor}
If $\G_1, \G_2 \in \mathcal{CR}$ and $\G = \G_1 \times \G_2$  then the group von Neumann algebra $L\G$ cannot be decomposed as a crossed
product $L\G=B\rtimes \La$, where $\La\car B$ is a free action on a diffuse, abelian von Neumann algebra $B$.
\end{cor}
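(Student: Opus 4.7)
The plan is to derive this as an immediate consequence of Theorem~\ref{mainintertwining} by choosing the simplest possible base algebra on the group side, namely $A = \C 1$ equipped with the trivial $\G$-action. First I would check that $L\G$ is a II$_1$ factor: each $\G_i \in \mathcal{CR}$ is i.c.c.\ by the very definition of $\mathcal{CR}$, and a product of i.c.c.\ groups is again i.c.c., so $\G = \G_1 \times \G_2$ is i.c.c.\ and $L\G = \C 1 \rtimes \G$ is a factor. In particular, the hypotheses of Theorem~\ref{mainintertwining} are satisfied on the ``algebraic'' side.

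I would then argue by contradiction: suppose $L\G = B \rtimes \La$ for some free action $\La \car B$ on a diffuse abelian von Neumann algebra $B$. The triple $(A = \C 1, \G \car A \text{ trivially}, N = L\G)$ and the triple $(B, \La \car B, N = L\G)$ together fit precisely the setting of Theorem~\ref{mainintertwining}, so that theorem yields
\[
B \prec_{L\G} \C 1.
\]

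Finally, I would derive a contradiction from the fact that a diffuse abelian subalgebra cannot intertwine into the scalars. Since $B$ is diffuse, I can choose a sequence of unitaries $b_n \in \mathcal{U}(B)$ tending to $0$ weakly in $L^2(L\G)$ (for instance, a sequence of characters on a compact abelian group realizing $B$). For such a sequence we have
\[
\|E_{\C 1}(x b_n y)\|_2 = |\tau(x b_n y)| \longrightarrow 0 \qquad \text{for every } x,y \in L\G,
\]
and Popa's intertwining criterion (Corollary~2.3 in \cite{popa2004}) then forces $B \nprec_{L\G} \C 1$, contradicting the conclusion produced by Theorem~\ref{mainintertwining}. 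Hence no such decomposition $L\G = B \rtimes \La$ exists. There is no genuine obstacle here; the content of the corollary is entirely absorbed by Theorem~\ref{mainintertwining}, and the present argument is essentially the observation that a diffuse algebra has no character on any nonzero corner.
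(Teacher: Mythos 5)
Your proposal is correct and is essentially identical to the paper's own proof: the paper likewise applies Theorem~\ref{mainintertwining} with $A=\mathbb C 1$ and concludes that $B\prec_N\mathbb C 1$ is impossible because $B$ is diffuse. The extra details you supply (the i.c.c.\ check and the sequence of unitaries tending weakly to zero witnessing $B\nprec_N\mathbb C 1$) are accurate fillings-in of what the paper leaves implicit.
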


\begin{proof}
If we assume that $L\G = B\rtimes \La$ then applying the previous result for $A=\mathbb C 1$ we would have $B\prec_N \mathbb C 1 $ which is
obviously a contradiction because $B$ is diffuse.
\end{proof}

Theorem~\ref{mainintertwining} can also be used to obtain von Neumann algebras with unique group measure space Cartan subalgebra.

\begin{cor}\label{cor:uniquegmscartan}

Let $\G_1, \G_2 \in \mathcal{CR}$, $\G = \G_1 \times \G_2$ and let $\G \car X$ be a free measure preserving action on a standard probability
space. If there exists $\La \car Y$ a free measure preserving action on a standard probability space such that $N = L^{\infty} (X)\rtimes \G =
L^{\infty}(Y) \rtimes \La$ then one can find a unitary $u\in N$ such that we have $uL^{\infty}(Y) u^* = L^{\infty} (X)$.

\end{cor}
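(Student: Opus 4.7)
The plan is to deduce Corollary~\ref{cor:uniquegmscartan} directly from the intertwining relation established in Theorem~\ref{mainintertwining}, combined with Popa's Cartan conjugacy criterion (Theorem~\ref{intertwining-conj}). Essentially all of the analytical work has already been done; what remains is a short formal argument assembling the pieces.

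First, I would observe that since the actions $\G \car X$ and $\La \car Y$ are both free (and one may assume ergodicity of, say, $\G \car X$ so that $N$ is a $\text{II}_1$ factor), both $A := L^{\infty}(X)$ and $B := L^{\infty}(Y)$ are Cartan subalgebras of $N$. In particular, each is a semiregular MASA, placing us squarely inside the hypotheses of Theorem~\ref{intertwining-conj}. Next, I would apply Theorem~\ref{mainintertwining} to the given crossed-product decomposition $N = A \rtimes \G = B \rtimes \La$, obtaining $B \prec_N A$. Because $B$ is a MASA in $N$ we have $B' \cap N = B$, so the choice $B_0 := B \subset B$ automatically satisfies the hypothesis $B_0' \cap N = B$ of Theorem~\ref{intertwining-conj}, while $B_0 \prec_N A$ holds by construction.

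Finally, invoking Theorem~\ref{intertwining-conj} produces a unitary $v \in N$ with $v A v^* = B$, and setting $u := v^*$ gives the desired equality $u L^{\infty}(Y) u^* = L^{\infty}(X)$. There is no real obstacle at this stage: the hard content of the argument is encoded in Theorem~\ref{mainintertwining}, whose proof in turn rests on the transfer Lemma~\ref{transfer} and the uniform-convergence criterion of Theorem~\ref{derivunifconv}. Once that intertwining is in hand, the upgrade from $B \prec_N A$ to unitary conjugacy is a purely formal consequence of Popa's criterion, requiring no further deformation/rigidity analysis.
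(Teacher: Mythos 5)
Your proposal is correct and coincides with the paper's own argument: Theorem~\ref{mainintertwining} gives $L^{\infty}(Y)\prec_N L^{\infty}(X)$, and Popa's conjugacy criterion (Theorem~\ref{intertwining-conj}), applied with $B_0=B$ using that a MASA satisfies $B'\cap N=B$, upgrades this to unitary conjugacy. The extra remarks on semiregularity and the choice of $B_0$ are exactly the details the paper leaves implicit.
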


\begin{proof}
By Theorem~\ref{mainintertwining} we have $L^{\infty}(Y)\prec_N  L^{\infty} (X)$.  Popa's conjugacy criterion (Theorem \ref{intertwining-conj})
for Cartan subalgebras then gives the desired conclusion.
\end{proof}


\section{$W^*$-superrigidity applications}\label{sec:applications}

In this section we use the technical results from previous section to manufacture new examples of $W^*$-superrigid actions. By definition, an
action $\G\car X$ is called $W^*$-superrigid if, for every free, p.m.p.\ action $\La\car Y$, an isomorphism between the crossed products von
Neumann algebras $L^{\infty}(X)\rtimes\G$ and $L^{\infty}(Y)\rtimes\La$ entails conjugacy of the actions $\G\car X$ and $\La\car Y$.

As explained in the introduction, the strategy to produce such actions is to find OE-superrigid actions that give rise to von Neumann algebras
with unique group measure space Cartan subalgebras. Using his influential \emph{deformation/rigidity} theory, Popa discovered the following
class of OE-superrigid actions of product groups.

\begin{thm*}[Corollary 1.3 in \cite{popa07b}] For $i=1,2$, let $\G_i$ be nonamenable groups such that $\G=\G_1\times \G_2$ has no normal finite
subgroup and let $\G\car^{\sigma} X$ be a free, p.m.p.\ $s$-malleable action (see \cite{popa2006} for the definition of an $s$-malleable
action).  If we assume that the restriction $\G_1\car^{\sigma_{|\G_1}}(X,\mu)$ is weak mixing and $\G_2\car^{\sigma_{|\G_2}}(X,\mu)$ has stable
spectral gap then $\G\car^{\sigma} X$ is OE-superrigid.
\end{thm*}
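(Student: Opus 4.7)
The plan is to prove the stronger statement of cocycle superrigidity with countable target groups, which then implies OE-superrigidity by a standard conversion. Given a free action $\La\car Y$ and an orbit equivalence $\Phi:X\ra Y$, form the Zimmer cocycle $w:\G\times X\ra \La$ via $\Phi(\g x)=w(\g,x)\Phi(x)$, and aim to untwist $w$ to a group homomorphism $\delta:\G\ra\La$. Freeness of both actions and the no-finite-normal-subgroup hypothesis then force $\delta$ to be an isomorphism and $\Phi$, after a measurable perturbation $\phi:X\ra\La$, to conjugate $\sigma$ with $\La\car Y$.

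I would work inside $M=(A\rtimes_{\sigma}\G)\rtimes\La$, where $A=L^{\infty}(X)$, viewing the cocycle as unitaries $W_{\g}=\sum_x 1_{[x]}v_{w(\g,x)}\in \mathcal U(M)$ satisfying $W_{\g\g'}=W_{\g}\sigma_{\g}(W_{\g'})$. The $s$-malleability supplies a $\G$-equivariant continuous path $(\alpha_t)_{t\in[0,1]}$ of $*$-automorphisms of the double $A\oo A$ joining the identity to the flip symmetry, together with Popa's transversality inequality. The stable spectral gap of $\sigma_{|\G_2}$ forces $\sup_{\g\in\G_2}\|\alpha_t(u_{\g})-u_{\g}\|_2\ra 0$ as $t\ra 0$, and composing with the cocycle identity this upgrades to uniform smallness of $\|\alpha_t(W_{\g})-W_{\g}\|_2$ over $\g\in\G_2$. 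A closest-point-in-convex-hull averaging argument inside $M$ applied to $\{W_{\g}\alpha_t(W_{\g})^*\ | \ \g\in\G_2\}$ then produces a unitary $\phi\in M$ conjugating $W_{|\G_2}$ to unitaries of the form $v_{\delta_2(\g)}$ for some homomorphism $\delta_2:\G_2\ra\La$.

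For the $\G_1$-direction I would use that $[\G_1,\G_2]=e$ together with weak mixing of $\sigma_{|\G_1}$: for every $\g_1\in\G_1$ the unitary $u_{\g_1}\phi u_{\g_1}^*$ untwists $W_{|\G_2}$ as well, so $\phi^* u_{\g_1}\phi\, u_{\g_1}^*$ commutes with $v_{\delta_2(\G_2)}$; weak mixing of the $\G_1$-action on $A$ then collapses this element to $1\otimes v_{\delta_1(\g_1)}$ for a homomorphism $\delta_1:\G_1\ra\La$, and compatibility with $\delta_2$ yields a global $\delta:\G\ra\La$ with $w$ cohomologous to $\delta$. The main obstacle I expect is the averaging step inside $M$: one must show that the $\|\cdot\|_2$-minimal element in the weak closure of the convex hull of $\{W_{\g}\alpha_t(W_{\g})^*\}_{\g\in\G_2}$ is nonzero and has its Fourier expansion along $\La$ concentrated on a single group-like term, so that $\phi$ actually produces a homomorphism-valued untwisting rather than a diffuse intertwiner. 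This is where one truly needs both the spectral gap (to control the deformation on $A\rtimes\G$) and the malleability/flip (to transport the deformation through the cocycle into the $\La$-direction), after which the passage from an untwisted cocycle to a conjugacy is standard.
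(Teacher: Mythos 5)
This statement is not proved in the paper at all: it is quoted verbatim from Popa's article \cite{popa07b} and used as a black box (the paper's only contribution here is to combine it with Corollary~\ref{cor:uniquegmscartan}), so there is no in-paper argument to compare against. Judged against Popa's actual proof, your sketch has the right architecture -- untwist the Zimmer cocycle via malleability plus spectral gap, then pass from an untwisted cocycle to a conjugacy using freeness and the absence of finite normal subgroups -- but the two steps where the proof actually lives are either misstated or missing.

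First, the ambient algebra $(A\rtimes_{\sigma}\G)\rtimes\La$ is not defined, since $\La$ does not act on $A\rtimes_\sigma\G$; the correct framework is to regard $W_\g=\sum_{\l\in\La}1_{\{x\,:\,w(\g,x)=\l\}}\otimes v_\l$ as unitaries in $A\overline{\otimes}L\La$ and to work in $\mathcal M=(A\overline{\otimes}L\La)\rtimes_\sigma\G$ together with its malleable extension $\tilde{\mathcal M}=(\tilde A\overline{\otimes}L\La)\rtimes\G$, $\tilde A=A\overline{\otimes}A$. Second, and more seriously, the spectral gap step as you state it is vacuous: $\alpha_t$ is $\G$-equivariant and hence fixes the canonical unitaries, so $\sup_{\g\in\G_2}\|\alpha_t(u_\g)-u_\g\|_2=0$ identically and carries no information, and one cannot ``compose with the cocycle identity'' to deduce uniform smallness of $\|\alpha_t(W_\g)-W_\g\|_2$. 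In Popa's argument the stable spectral gap of $\sigma_{|\G_2}$ is applied directly to the vectors $\xi_{t,\g}=\alpha_t(W_\g u_\g)-E_{\mathcal M}(\alpha_t(W_\g u_\g))$ in the $\mathcal M$-bimodule $L^2(\tilde{\mathcal M})\ominus L^2(\mathcal M)$: the cocycle identity shows these are almost invariant under $\operatorname{Ad}(W_hu_h)$ for $h\in\G_2$, spectral gap forces $\|\xi_{t,\g}\|_2$ to be small uniformly in $\g\in\G_2$, and only then does the transversality inequality of $s$-malleability yield $\sup_{\g\in\G_2}\|\alpha_{2t}(W_\g u_\g)-W_\g u_\g\|_2\to 0$. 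Finally, you never explain how to pass from small $t$ to the flip at $t=1$: the convex-hull argument at a fixed small $t$ only produces a nonzero intertwiner between $\operatorname{id}$ and $\alpha_t$, and one must use weak mixing to upgrade it to a unitary and then iterate (using the graded symmetry $\beta$ of $s$-malleability) to reach $t=1$, after which the weak mixing of $\sigma_{|\G_1}$ untwists the full cocycle. Without these steps the proposal is an accurate table of contents for Popa's proof rather than a proof.
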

When this result is combined with Corollary~\ref{cor:uniquegmscartan} we obtain the following $W^*$-superrigidity statement:

\begin{cor}\label{w-superrigidity} Let $\G_1,\G_2 \in \mathcal{CR}$ such that $\G = \G_1\times \G_2$ has no normal finite subgroup and let
$\G\car^{\sigma} X$ be a free, p.m.p.\ $s$-malleable action. If we assume that the restricted action $\G_1\car^{\sigma_{|\G_1}}(X,\mu)$ is weak
mixing and $\G_2\car^{\sigma_{|\G_2}}(X,\mu)$ has stable spectral gap then the action $\G\car^{\sigma} X$ is $W^*$-superrigid.
\end{cor}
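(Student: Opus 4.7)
The plan is to combine the unique group-measure space Cartan decomposition result of Corollary~\ref{cor:uniquegmscartan} with Popa's OE-superrigidity theorem (Corollary 1.3 in \cite{popa07b}) quoted immediately above the statement. The hypotheses on $\sigma$ (free, p.m.p., $s$-malleable, weak mixing on $\G_1$, stable spectral gap on $\G_2$, and $\G$ having no finite normal subgroup) are exactly what Popa's theorem requires, and the condition $\G_i\in\mathcal{CR}$ is what Corollary~\ref{cor:uniquegmscartan} requires, so the two results fit together cleanly.

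Concretely, suppose $\La\car Y$ is an arbitrary free, ergodic, p.m.p.\ action and let $\theta: L^\infty(X)\rtimes\G\to L^\infty(Y)\rtimes\La$ be a $*$-isomorphism. Setting $N=L^\infty(X)\rtimes\G$, the subalgebra $\theta^{-1}(L^\infty(Y))\subset N$ is a group-measure space Cartan subalgebra arising from the free action $\La\car\theta^{-1}(L^\infty(Y))$ implemented by $\theta$. Since $\G=\G_1\times\G_2$ with $\G_i\in\mathcal{CR}$, Corollary~\ref{cor:uniquegmscartan} applies and produces a unitary $u\in N$ with $u\,\theta^{-1}(L^\infty(Y))\,u^* = L^\infty(X)$.

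Composing $\theta$ with $\mathrm{Ad}(u)$ therefore yields an isomorphism $\Psi: L^\infty(X)\rtimes\G\to L^\infty(Y)\rtimes\La$ that sends $L^\infty(X)$ onto $L^\infty(Y)$. By the standard Feldman-Moore/Singer correspondence between free actions and their Cartan inclusions, any such Cartan-preserving isomorphism implements an orbit equivalence between $\G\car^\sigma X$ and $\La\car Y$: namely, there is a measure space isomorphism $\Phi:X\to Y$ such that $\Phi(\G\cdot x)=\La\cdot\Phi(x)$ for a.e.\ $x\in X$.

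At this point the actions $\G\car^\sigma X$ and $\La\car Y$ are orbit equivalent, and Popa's OE-superrigidity theorem (the displayed theorem of \cite{popa07b} recalled above) applies directly to $\G\car^\sigma X$ under exactly the hypotheses of the corollary, giving that the orbit equivalence $\Phi$ can be modified to a conjugacy, i.e.\ there exists a group isomorphism $\delta:\G\to\La$ and a measure space isomorphism $\Phi':X\to Y$ with $\Phi'(\g x)=\delta(\g)\Phi'(x)$. This is precisely the assertion that $\G\car^\sigma X$ is $W^*$-superrigid. There is no real obstacle in this argument beyond quoting the two results correctly; the only thing to verify carefully is the passage from a Cartan-preserving von Neumann algebra isomorphism to an orbit equivalence of the underlying actions, which is classical and relies on the freeness of both actions.
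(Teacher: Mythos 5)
Your proposal is correct and is exactly the argument the paper intends: Corollary~\ref{cor:uniquegmscartan} conjugates the unknown group-measure space Cartan subalgebra onto $L^{\infty}(X)$, the Singer/Feldman--Moore correspondence converts the resulting Cartan-preserving isomorphism into an orbit equivalence, and Popa's OE-superrigidity theorem upgrades that to a conjugacy. Nothing further is needed.
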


We mention the following concrete examples of $W^*$-superrigid actions arising from generalized Bernoulli actions. Consider $\G=\G_1\times\G_2$
 where $\G_i\in\mathcal{CR}$ and $\G$ has no normal finite subgroups, let $I$ be a countable, faithful $\G$-set such that for all $i\in I$
 the orbit $(\G_1)i$ is infinite and the stabilizer $\{\g\in \G_2 \ |  \g i=i\}$ is amenable. From the proof of Lemma 3.3 in
 \cite{popa07b} the generalized Bernoulli action $\G\car(\mathbb T,\lambda)^I$ satisfies the conditions in the hypothesis of the previous corollary
  and thus it follows $W^*$-superrigid. \vskip 0.1in
  Monod and Shalom unveiled in \cite{monodshalom} a family of actions of certain product groups that are very close to being OE-superrigid
  (See also Theorem 44 in \cite{sako2010} for additional examples).  For a better understanding of their result
  we need to introduce some terminology.

An action $\G\car(X,\mu)$ is called \emph{mildly mixing} if whenever $A\subseteq X$ and $\g_n\in\G $ then $ \mu(\g_nA\triangle A)\ra 0\text{ as
}\g_n\ra \infty$ only when $A$ is either null or conull.  It is not hard to see that mixing implies mildly mixing, which in turn implies weak
mixing. Also an action $\G\car X$ is called \emph{aperiodic} if its restriction to any finite index subgroup of $\G$ is ergodic.

Following the notations in \cite{monodshalom}, one says that a group $\G$ belongs to $\mathcal C_{\text{reg}}$ if it has nonvanishing second
bounded cohomology with coefficients in the left regular representation, i.e., $H^2_b(\G,\ell^2(\G))\neq 0$. The class $\mathcal
C_{\text{reg}}$ is fairly rich, including all groups which admit a non-elementary nonsimplicial action on a simplicial tree, which is proper on
the set of edges; and all groups which admit a non-elementary proper isometric action on some CAT(-1)-space (see \cite{monodshalom} for more
examples). In particular, any non-elementary, amalgamated free product $\G_1\ast_{\Omega}\G_2$ belongs to $\mathcal C_{\text{reg}}$ if one
assumes that the subgroup $\Omega$ is almost malnormal in one of the factors (Corollary 7.10 in \cite{monodshalom1}).

  Using second bounded cohomology methods, Monod and Shalom proved the following OE-strong rigidity result:
\begin{thm*}[Theorem 1.10 in \cite{monodshalom}]\label{oestrogrigidity} Let $\G=\G_1\times\G_2$ where $\G_i\in \mathcal C_{\text{reg}}$ and let
$\G\car X$ be a free, irreducible, aperiodic action. Suppose that $\La\car Y$ is mildly mixing action. If the action $\G\car X$ is orbit
equivalent with $\La\car Y$ then the two actions are conjugate.
\end{thm*}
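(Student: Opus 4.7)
The plan is to prove this via the bounded cohomology machinery that underlies the class $\mathcal{C}_{\text{reg}}$. First, I would encode the orbit equivalence $\Phi : X \to Y$ as a measure equivalence coupling between $\G$ and $\La$ together with a measurable cocycle $\alpha : \G \times X \to \La$ satisfying $\Phi(\g x) = \alpha(\g, x) \Phi(x)$ almost everywhere. Conjugacy will follow once we show that $\alpha$ is cohomologous (via a measurable map $X \to \La$) to a group isomorphism $\theta : \G \to \La$ that intertwines the two actions.

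The heart of the argument is to exploit that $H^2_b(\G_i, \ell^2(\G_i)) \neq 0$ for each factor. Using the functoriality of continuous bounded cohomology under measure equivalence couplings (an induction/restriction mechanism developed in \cite{monodshalom}), each nonzero class on $\G_i$ produces, after pulling back through $\alpha$, a nonzero class in the bounded cohomology of $\La$ with coefficients in an $\La$-representation induced from $\ell^2(\G_i)$. The key cocycle superrigidity lemma in \cite{monodshalom} then says: if $\La \car Y$ is mildly mixing, any such class forces the cocycle restricted to $\G_i$ to take values, up to coboundary, in a subgroup of $\La$ of a very restricted form. Applying this to each factor separately, and using that the two factors $\G_1, \G_2$ commute inside $\G$, one decomposes $\La$ (up to a finite-index issue) into a product of two commuting subgroups into which $\alpha|_{\G_i \times X}$ essentially lands.

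Next, I would use the hypothesis that $\G \car X$ is \emph{irreducible} (each $\G_i$ acts ergodically) to rule out the degenerate case where the cocycle could collapse onto one factor; together with mild mixing of $\La \car Y$ this upgrades the coboundary-type equality to honest values of $\alpha$ in a product subgroup, and forces the resulting homomorphism $\theta : \G \to \La$ to be injective on each factor. Aperiodicity of $\G \car X$ (ergodicity on every finite-index subgroup) then forces $\theta(\G)$ to be all of $\La$: any proper finite-index subgroup of $\La$ containing $\theta(\G)$ would pull back, via the orbit equivalence, to a nonergodic finite-index subaction of $\G \car X$. Consequently $\theta$ is a group isomorphism, and the cocycle $\alpha$ is cohomologous to $\theta$, yielding the measurable conjugacy $\Phi' : X \to Y$ between $\G \car X$ and $\La \car Y$.

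The main obstacle I expect is the second step: establishing the precise form of cocycle rigidity extracted from the nonvanishing of $H^2_b(\G_i, \ell^2(\G_i))$. This requires both the definition of continuous bounded cohomology with coefficients in a measurable field of Hilbert spaces and the delicate ``double ergodicity'' / Poisson boundary input which makes mild mixing of $\La \car Y$ interact correctly with the induced representation. Once that rigidity is in hand, the product decomposition and the upgrade to an isomorphism via irreducibility and aperiodicity are essentially formal.
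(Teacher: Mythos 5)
First, a point of orientation: the paper you are working from does not prove this statement at all. The bracketed attribution ``Theorem 1.10 in \cite{monodshalom}'' indicates that it is quoted verbatim from Monod and Shalom's paper and used as a black box (it is combined with Corollary~\ref{cor:uniquegmscartan} to deduce Corollary~\ref{strongrig}). So there is no internal proof to compare your attempt against; the relevant proof lives entirely in \cite{monodshalom}.

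As a reconstruction of the Monod--Shalom argument, your outline has the right overall shape: encode the orbit equivalence as a measurable cocycle $\alpha:\G\times X\to\La$, transfer the nonvanishing of $H^2_b(\G_i,\ell^2(\G_i))$ through the measure equivalence coupling via induction in bounded cohomology, use mild mixing of $\La\car Y$ (through double ergodicity of Poisson boundaries) to rigidify the cocycle, and then use irreducibility and aperiodicity to upgrade the resulting homomorphism to an isomorphism conjugating the actions. However, as written this is a plan rather than a proof: the step you yourself flag as ``the main obstacle'' --- extracting the precise cocycle rigidity statement from the nonvanishing of second bounded cohomology with $\ell^2$ coefficients, and showing that mild mixing of the target action forces the induced class to survive and the cocycle to untwist --- is essentially the entire analytic content of the theorem. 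Everything before and after it (the reduction to a cocycle, the product decomposition of $\La$, the use of aperiodicity to rule out proper finite-index images) is, as you say, comparatively formal. Without that central lemma being stated and proved, the argument does not yet establish the theorem; if your goal is only to understand why the result is available for use in this paper, citing \cite{monodshalom} as the source, exactly as the authors do, is the appropriate course.
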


Consequently, when this theorem is combined with Corollary~\ref{cor:uniquegmscartan} above, we obtain the following $W^*$-strong rigidity
result:
\begin{cor}\label{strongrig} Let $\G=\G_1\times\G_2$ with $\G_i\in\mathcal{CR}\cap \mathcal C_{\text{reg}}$ and let
$\G\car X$ be a free, irreducible, aperiodic action. Suppose that $\La\car Y$ is mildly mixing action. If the action $\G\car X$ is
$W^*$-equivalent with $\La\car Y$ then the two actions are conjugate.
\end{cor}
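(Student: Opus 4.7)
The corollary is a direct composition of the unique group-measure space Cartan decomposition from Corollary~\ref{cor:uniquegmscartan} with the Monod--Shalom OE-strong rigidity theorem stated immediately above it. My plan is as follows.

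First, fix an isomorphism $\pi: L^{\infty}(X)\rtimes \G \to L^{\infty}(Y)\rtimes \La$ witnessing the $W^*$-equivalence, and use it to identify the two crossed products with a single II$_1$ factor $N$. Under this identification both $A := L^{\infty}(X)$ and $B := \pi^{-1}(L^{\infty}(Y))$ are Cartan subalgebras of $N$, and each arises as a group-measure space Cartan subalgebra (from the free actions $\G \car X$ and $\La \car Y$ respectively). Because the hypothesis places $\G_1,\G_2 \in \mathcal{CR}$, the factor $N$ fits precisely into the setting of Corollary~\ref{cor:uniquegmscartan}. That result yields a unitary $u \in N$ with $u B u^* = A$.

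Second, I would invoke the classical Feldman--Moore/Singer theorem: unitary conjugacy in $N$ of the two function algebras $L^{\infty}(X)$ and $L^{\infty}(Y)$ inside their respective group-measure space constructions implements an orbit equivalence between the underlying free p.m.p.\ actions $\G \car X$ and $\La \car Y$. Thus, after composing $\pi$ with $\Ad(u)$, we obtain that the two actions are orbit equivalent.

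Third, the remaining hypotheses on the actions are exactly those of the Monod--Shalom theorem (Theorem~1.10 of \cite{monodshalom}): $\G=\G_1\times \G_2$ with each $\G_i \in \mathcal C_{\text{reg}}$, the action $\G\car X$ is free, irreducible and aperiodic, and $\La\car Y$ is mildly mixing. Applying that theorem upgrades the orbit equivalence produced in the previous step to a conjugacy of the two actions, which is the desired conclusion.

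There is no real obstacle to this argument: the two deep inputs (uniqueness of group-measure space Cartan subalgebras, and OE-strong rigidity) have already been secured, and their hypotheses are fitted together precisely by requiring $\G_i \in \mathcal{CR} \cap \mathcal C_{\text{reg}}$. The only mildly delicate point is checking that the unitary $u$ produced by Corollary~\ref{cor:uniquegmscartan} indeed conjugates $L^\infty(Y)$ (viewed as a subalgebra of $L^{\infty}(X)\rtimes \G$ via $\pi$) onto $L^\infty(X)$ in a way compatible with the Feldman--Moore interpretation; this is automatic because both Cartan subalgebras are the images of the function algebras of the probability spaces under their respective group-measure space presentations.
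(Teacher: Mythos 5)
Your proposal is correct and follows exactly the paper's own route: Corollary~\ref{cor:uniquegmscartan} conjugates the two group-measure space Cartan subalgebras, the standard Feldman--Moore/Singer correspondence turns this into an orbit equivalence, and Monod--Shalom's Theorem 1.10 upgrades the orbit equivalence to a conjugacy. The paper states this combination in one line without spelling out the intermediate Feldman--Moore step, which you have made explicit.
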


Even though it is clear that the classes $\mathcal C_{\text{reg}}$ and $\mathcal{CR}$ do not coincide there is still a considerable overlap
between them. Indeed, combining the examples discussed above with the examples presented in the introduction, the intersection
$\mathcal{CR}\cap \mathcal C_{\text{reg}}$ contains all non-elementary amalgamated free products $\G_1\ast_{\Omega}\G_2$ which satisfy the
following three properties:

\begin{itemize}
  \item $\G_i$ are infinite groups and the common subgroup $\Omega$ is almost malnormal in one of the factors;
  \item  $\G_1$ contains an infinite subgroup which has property (T);
  \item  $\b^{(2)}_1(\G_1)+ \b^{(2)}_1(\G_2)+\frac{1}{|\Omega|}>\b^{(2)}_1(\Omega)$.
\end{itemize}
In particular, if $\G_1$ is an infinite group with property (T), then any non-elementary free product $\G_1\ast\G_2$ belongs to the class
$\mathcal{CR}\cap  \mathcal C_{\text{reg}}$.

\section{Other unique group-measure space decomposition results}\label{sec: other unique decomp}

In this section we will consider a class of groups larger than $\mathcal{CR}$ by not requiring cocycles to be in mixing representations but
rather in representations which are mixing relative to an amenable subgroup. Given a group $\G$ with a subgroup $\Sigma < \G$, and a
representation $\pi: \G \to \UU(\HH)$, we say that $\pi$ is mixing relative to $\Sigma$ if $\langle \pi(\g_n) \xi, \eta \rangle \to 0$ whenever
$\g_n \to \infty$ relative to $\Sigma$. One says that a group $\G$ belongs to $\mathcal{ACR}$ if it satisfies either condition (1) or (2)
below:
\begin{enumerate}

\item\begin{enumerate}
\item\label{item:cocycle1} There exists an amenable normal subgroup $\Sigma \vartriangleleft \G$ and a representation $\pi: \G \to \UU(\HH)$
 which is mixing relative to $\Sigma$; There exists an unbounded cocycle  $c:\G \to \HH$
 which vanishes on $\Sigma$;
\item\label{item:rigid1}  There exists a non-amenable subgroup $\Omega < \G$ such that the pair $(\G,\Omega)$ has relative property (T).
\end{enumerate}

\item\begin{enumerate}
\item\label{item:cocycle2} There exists an amenable normal subgroup $\Sigma \vartriangleleft \G$ and a representation $\pi: \G \to \UU(\HH)$
 which is mixing relative to $\Sigma$ and weakly contained in the left regular; There exists an unbounded cocycle  $c:\G \to \HH$
 which vanishes on $\Sigma$;
\item\label{item:rigid2}  There exists a non-amenable subgroup $\Omega < \G$ which is a product of two nonamenable groups.
\end{enumerate}

\end{enumerate}

Notice that any nonamenable group with positive first $\ell^2$-Betti number admits an unbounded cocycle into the left regular representation.
Therefore $\mathcal{ACR}$ contains every group $\G$ with first positive $\ell^2$-Betti number that admits a nonamenable subgroup $\Omega$ of
$\G$, such that the pair $(\G, \Omega)$ has relative property (T) or $\Omega$ is a product of two nonamenable groups.

Natural classes of groups in $\mathcal{ACR}$ have already been considered in \cite{popavaes09} and \cite{fimavaes2010}.

\begin{examp}If $\G_1, \G_2$ are two groups which contain a common finite subgroup $\Sigma$, and $\G = \G_1 *_\Sigma \G_2$ then we may consider the cocycle
$c: \G \to \ell^2(\G/\Sigma)$ which satisfies $c(\g_1) = \dd_\Sigma - \l(\g_1) \dd_\Sigma$ for $\g_1 \in \G_1$ and $c(\g_2) = 0$ for $\g_2 \in
\G_2$.  By the universal property of amalgamated free products this cocycle extends to all of $\G$ and will be unbounded as long as $\Sigma
\not= \G_1, \G_2$ (if $\g_i \in \G_i \setminus \Sigma$, then it is easy to see that $c$ is unbounded on $\{ (\g_1\g_2)^n \ | \ n \in \N \}$).

This shows that $\G$ satisfies condition (\ref{item:cocycle1}) or (\ref{item:cocycle2})  above.  Finding examples of this type which also
satisfy (\ref{item:rigid1}) or (\ref{item:rigid2}) is not difficult and we refer the reader to \cite{popavaes09} for examples.
\end{examp}

\begin{examp}
Suppose $H$ is a group which contains a finite subgroup $\Sigma$, and $\theta:\Sigma \to H$ is an injective homomorphism.  Let $\G = {\rm
HNN}(H, \Sigma, \theta) = \langle H, t \ | \ \theta(\sigma) = t \sigma t^{-1}, {\rm \ for \ all \ } \sigma \in \Sigma \rangle$ denote the HNN
extension then consider the cocycle $c:\G \to \ell^2(\G/\Sigma)$ given by $c(h) = 0$, for all $h \in H$, and $c(t) = \l(t) \dd_\Sigma$. We
leave it as an exercise to show that it extends to a well defined cocycle on $\G$, and we have that $\| c(t^n) \|^2 = |n|$ for all $n \in \Z$,
hence $c$ is unbounded.  In fact, for this example and the previous one, the cocycle we consider is well known, and arrises naturally from the
action of $\G$ on its Bass-Serre tree.

Then we have that $\G$ satisfies condition (\ref{item:cocycle1}) or (\ref{item:cocycle2})  above.  Again, finding examples of this type which
also satisfy (\ref{item:rigid1}) or (\ref{item:rigid2}) is not difficult and we refer the reader to \cite{popavaes09} for examples.

\end{examp}

The main theorem we prove in this section is an intertwining result (Theorem \ref{uniquecartan3} below) for trace preserving actions of groups
in class $\mathcal{ACR}$ on amenable algebras. As a consequence we obtain new von Neumann algebras with unique group measure space Cartan
subalgebras.

Our proof follows the same general strategy used to prove Theorem \ref{mainintertwining} above and it will be rather sketchy. First, by similar
arguments as in the proof of Lemma \ref{transfer} and Lemma 3.1 in \cite{popavaes09}, we show that a transfer lemma still holds for von Neumann
algebras associated with actions of groups belonging to $\mathcal{ACR}$.
\begin{lem}\label{transfer2}Let $\G\in \mathcal{ACR}$ and let $\G \car A$ be a trace preserving action on an amenable von Neumann algebra.  Suppose also that $B$
is an abelian von Neumann algebra and $\La \car B$ a free, ergodic action such that $N = A \rtimes \G = B \rtimes \La$. If $P \subset N$ is any
amenable von Neumann subalgebra then for every $\e
>0$, there exists $\alpha_{\e}>0$, and an infinite set of elements $S\subset \La$, such that
\begin{eqnarray}
\label{666}&\|\dd_{\alpha}(v_{\l})\|&\le \e \text{ for all } \l\in S, \alpha>\alpha_{\e} \\
\label{566} &\|E_{P}(xv_{\l}y)\|_2& \ra 0 \text{ for all } x,y\in N \text{ as }\l\ra\infty.
\end{eqnarray}
\end{lem}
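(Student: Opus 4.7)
The plan is to follow the template of the proof of Lemma~\ref{transfer}, replacing the subalgebra $A \rtimes \G_{(j)}$ throughout by the amenable subalgebra $P$, and using the amenability of $P$ in place of the non-containment result Lemma~\ref{lem:notcontained}. Write $\hat\dd = 0 \otimes \dd : N \oo N \to L^2 N \oo \mathcal H_\pi$ for the tensor-product derivation associated to the cocycle from condition~(\ref{item:cocycle1}) or~(\ref{item:cocycle2}), and let $\Omega < \G$ denote the subgroup from condition~(\ref{item:rigid1}) or~(\ref{item:rigid2}).

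The first step will be to prove that $\hat\dd_\alpha \circ \Delta$ converges uniformly to zero on $(L\Omega)_1$ as $\alpha \to \infty$. Under condition~(1) this is immediate: relative property~(T) of $(\G, \Omega)$ makes the inclusion $L\Omega \subset N$ rigid, hence so is $\Delta(L\Omega) \subset N \oo N$, and the standard deformation argument (used already in the proof of Lemma~\ref{lem:notcontained}) gives the desired uniform convergence. Under condition~(2), $\Omega = \Omega_1 \times \Omega_2$ is a product of nonamenable groups and $\pi$ is weakly contained in the left regular, so the same uniform convergence will be obtained by invoking spectral gap for products of nonamenable groups, exactly as in the proof of Lemma~3.1 of~\cite{popavaes09}.

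With the uniform bound in hand, the next step will be the mass concentration argument of Lemma~\ref{transfer}. Given any unitary $w = \sum_{\l \in \La} w_\l v_\l \in L\Omega$ (Fourier expansion along $N = B \rtimes \La$), one has
\[
\|\hat\dd_\alpha(\Delta(w))\|^2 \;=\; \sum_{\l \in \La} \|w_\l\|_2^{\,2}\,\|\dd_\alpha(v_\l)\|^2.
\]
Choosing the uniform bound from Step~1 smaller than $\e/\sqrt{8}$ and setting $S = \{\l \in \La : \|\dd_\alpha(v_\l)\| \le \e\}$, a Chebyshev-type estimate forces $\sum_{\l \in S} \|w_\l\|_2^{\,2} \ge 7/8$ for every $w \in \UU(L\Omega)$ and every $\alpha$ past some $\alpha_\e$. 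Condition~(\ref{666}) on $S$ is then automatic.

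To verify~(\ref{566}), I would argue by contradiction, in parallel with Lemma~\ref{transfer}. If~(\ref{566}) fails, then after passing to an infinite subset of $S$ one obtains a finite $F \subset N$ and $c > 0$ with $\sum_{z, y \in F} \|E_P(z^* v_\l y)\|_2^{\,2} \ge c$ for all $\l \in S$. Forming $\xi = \sum_{z \in F} 1 \otimes (z^* e_P z) \in L^2 N \oo L^2\langle N, e_P \rangle$, the mass estimate yields $\langle \Delta(w)\xi\Delta(w^*), \xi\rangle \ge 7c/8$ for every $w \in \UU(L\Omega)$, so the unique $\|\cdot\|_{\tau \times Tr}$-minimal vector in the closed convex hull of $\{\Delta(w)\xi\Delta(w^*) : w \in \UU(L\Omega)\}$ is a nonzero $\Delta(L\Omega)$-central vector in $L^2 N \oo L^2\langle N, e_P\rangle$. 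The contradiction now comes from amenability of $P$: since $L^2\langle N, e_P\rangle$ is weakly contained in the coarse $N$-bimodule, Lemma~\ref{lem:diagonalbimodule} gives that $L^2 N \oo L^2\langle N, e_P\rangle$ is weakly contained in the coarse $N \oo N$-bimodule, and since $L\Omega$ is nonamenable in both $\mathcal{ACR}$ scenarios, $\Delta(L\Omega)$ admits no nonzero central vector there. The main obstacle will be the uniform convergence under condition~(2), which requires carefully adapting the spectral gap argument for products of nonamenable groups to the resolvent deformation $\hat\dd_\alpha \circ \Delta$.
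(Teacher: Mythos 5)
Your proposal is correct and takes essentially the same route as the paper's proof: first the uniform convergence of $\hat\dd_{\alpha}\circ\Delta$ on $(L\Omega)_1$ (via relative property (T) in case (1), and via the spectral gap argument together with Lemma~\ref{lem:diagonalbimodule} in case (2)), then the Fourier mass-concentration estimate, and finally the averaging/central-vector contradiction in $L^2N\oo L^2\langle N,e_P\rangle$. The only difference is cosmetic: the paper outsources this last step to the end of the proof of Lemma 3.2 in \cite{popavaes09}, whereas you spell it out explicitly using the amenability of $P$ and the nonamenability of $L\Omega$.
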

\begin{proof}
Fix $\e>0$. Borrowing the same notations from the proof of Lemma \ref{transfer} we briefly argue that in both cases there exists
$\alpha_{\e}>0$ such that for all $\alpha>\alpha_{\e}$ we have
\begin{equation}\label{166}
\| \hat \dd_\alpha \circ \Delta(x) \| \le \frac{\e}{2}, {\rm \ for \ all \ } x \in (L\Omega)_1.
\end{equation}

When the pair $(\G, \Omega)$ has relative property (T), this follows from the proof of Lemma \ref{transfer}, so it only remains to prove it
when $\Omega$ is a product of nonamenable groups. This case however follows by referencing the same proof from Theorem 4.3 in
\cite{peterson2006} and using Lemma \ref{lem:diagonalbimodule}. We leave the details to the reader.

Next we fix $w\in L\Omega$ a unitary and let $w=\sum_{\l\in \La} w_{\l}v_{\l}$ with $w_{\l}\in B$ be its Fourier expansion in $B\rtimes \La=N$.
If we denote by $S=\{\l\in \La \ | \ \|\dd_{\alpha} (v_{\l})\|_2\le \e \}$, then using (\ref{166}) in the same manner as in the proof of Lemma
\ref{transfer} we obtain that for all $w\in \mathcal U(L\Omega)$ we have
\begin{equation}\label{366}
\sum_{\l\in S}\|w_{\l}\|^2_2\ge \frac{3}{4}.
\end{equation}

So to finish the proof it only remains to check (\ref{566}) for the set $S$. However this follows from using relation (\ref{366}) above exactly
as shown in the last part in the proof of Lemma 3.2 in \cite{popavaes09}.\end{proof}

Pairing the above transfer lemma with the criterion for uniform convergence of the resolvent deformation described in Section
\ref{sec:convergence} we obtain the following:

\begin{thm}\label{uniquecartan3}
Suppose $\G \in \mathcal{ACR}$, $A$ is an abelian von Neumann algebra and $\G \car A$ is a free, ergodic action.  If $B$ is an abelian von
Neumann algebra and $\La \car B$ is a free action such that  $N = A \rtimes \G = B \rtimes \La$ then $B\prec_NA\rtimes \Sigma$. Moreover if
$\Sigma=\{e\}$ there exists a unitary $u \in \UU(N)$ such that $uBu^* = A$.
\end{thm}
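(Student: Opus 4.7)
The plan is to follow the strategy of Theorem~\ref{mainintertwining}, substituting the $\mathcal{ACR}$--analog transfer Lemma~\ref{transfer2} for Lemma~\ref{transfer}. Since $\G \in \mathcal{ACR}$, we have an amenable normal subgroup $\Sigma \lhd \G$, a representation $\pi:\G\to\UU(\HH)$ mixing relative to $\Sigma$, an unbounded cocycle $c:\G\to\HH$ vanishing on $\Sigma$, and the associated closable real derivation $\dd:N\to\HH_\pi$ with its resolvent deformation $\dd_\alpha$.

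The first step would be to set $P = A\rtimes\Sigma\subset N$; this subalgebra is amenable, since $A$ is abelian and $\Sigma$ is amenable. Applying Lemma~\ref{transfer2} to this $P$, for every $k \in \N$ one obtains $\alpha_k > 0$ and an infinite set $S_k\subset\La$ such that $\|\dd_\alpha(v_\l)\|\le 1/k$ for all $\l\in S_k$ and $\alpha>\alpha_k$, and $\|E_{A\rtimes\Sigma}(xv_\l y)\|_2\to 0$ as $\l\to\infty$ within $S_k$, for every $x,y\in N$. Because each $v_\l$ normalizes $B$ (coming from the crossed product $N=B\rtimes\La$), the family $\mathcal F=\bigcup_k\{v_\l\ |\ \l\in S_k\}\subset\mathcal N_N(B)$ together with $r=1\in\mathcal F'\cap N$ satisfies the hypotheses of the uniform-convergence criterion Theorem~\ref{derivunifconv}. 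Since $B$ is a Cartan subalgebra of $N$, hence semiregular, that criterion yields that $\dd_\alpha$ converges to zero uniformly on the unit ball $(B)_1$.

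With this uniform convergence in hand, I would prove $B\prec_N A\rtimes\Sigma$ by contradiction. If the conclusion fails, Popa's intertwining technique (Corollary 2.3 in \cite{popa2004}) produces a sequence of unitaries $b_n\in\mathcal U(B)$ with $\|E_{A\rtimes\Sigma}(xb_ny)\|_2\to 0$ for all $x,y\in N$. Then Theorem~\ref{unifconvonnormalizer}, applied with $p=1$, would force $\dd_\alpha$ to converge to zero uniformly on $(\mathcal N_N(B)'')_1=(N)_1$. However, since $\rho_\alpha(u_\g)=\tfrac{\alpha}{\alpha+\|c(\g)\|^2}\,u_\g$ on the canonical unitaries, uniform convergence $\dd_\alpha\to 0$ on $\{u_\g\mid \g\in\G\}$ would force $\sup_{\g\in\G}\|c(\g)\|<\infty$, contradicting the unboundedness of $c$. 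Hence $B\prec_N A\rtimes\Sigma$.

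For the moreover statement, if $\Sigma=\{e\}$ the first conclusion becomes $B\prec_N A$; since $A$ and $B$ are semiregular MASAs in the II$_1$ factor $N$ and $B$ is its own relative commutant, Popa's conjugacy criterion (Theorem~\ref{intertwining-conj}), applied with $B_0=B$, delivers a unitary $u\in N$ with $uBu^*=A$. The only delicate bookkeeping in this plan is verifying that the data bundled into the definition of $\mathcal{ACR}$ (normality of $\Sigma$, vanishing of $c$ on $\Sigma$, and mixing of $\pi$ relative to $\Sigma$) is exactly what is needed to run both Lemma~\ref{transfer2} and Theorem~\ref{derivunifconv}; all the heavy lifting has already been done in the preceding sections.
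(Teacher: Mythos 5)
Your proposal is correct and follows essentially the same route as the paper: apply Lemma~\ref{transfer2} with $P = A\rtimes\Sigma$, feed the resulting sequences into Theorem~\ref{derivunifconv} to get uniform convergence of $\dd_\alpha$ on $(B)_1$, then derive $B\prec_N A\rtimes\Sigma$ by contradiction via the normalizer-convergence argument, and finish with Popa's conjugacy criterion when $\Sigma=\{e\}$. The only (harmless) cosmetic differences are that you invoke the paper's own Theorem~\ref{unifconvonnormalizer} where the paper cites Theorem 4.3 of \cite{peterson2006}, and you spell out explicitly why uniform convergence on $(N)_1$ contradicts the unboundedness of the cocycle.
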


\begin{proof} Since $\Sigma$ is amenable it follows that the von Neumann algebra $A\rtimes \Sigma$ is also amenable. Applying
Lemma \ref{transfer2} for $P=A\rtimes \Sigma$ we obtain that for every $k\in\mathbb N$ there exists $\alpha_k>0$ and an infinite sequence
$\{v_{\l^k_n} \ | \ n\in \mathbb N\}\subset \La$ satisfying the following:
\begin{eqnarray*}
&&\|\dd_{\alpha}(v_{\l^k_n})\|\le \frac{1}{k}\text{ for all } \alpha>\alpha_k, k, n \in \mathbb N; \\
&&\|E_{A\rtimes \Sigma}(xv_{\l^k_n}y)\|_2\ra 0 \text{ as }n\ra\infty,\text{ for each }k\in \mathbb N.
\end{eqnarray*}

Since $\G\in \mathcal{ACR}$ then it admits an unbounded 1-cocycle into a representation which is mixing relative to $\Sigma$ and hence Theorem
\ref{derivunifconv} implies that $\dd_\alpha$ converges uniformly on $(B)_1$.

Next we proceed by contradiction to show $B\prec_N A\rtimes \Sigma$. Assuming $B\nprec_N A\rtimes \Sigma$, by Popa's intertwining techniques
\cite{popa2004}, there exists a sequence of unitaries $b_n\in \mathcal{U}(B)$ such that $\|E_A(xb_ny)\|_2\ra 0$ as $n\ra \infty$. Therefore
Theorem 4.3 in \cite{peterson2006} implies that $\dd_\alpha$ converges to zero uniformly on the unit ball of $\mathcal N_N(B)''=N$ which is
obviously a contradiction.

When $\Sigma=\{e\}$ we have $B\prec_N A$ and by Theorem \ref{intertwining-conj} there exists a unitary $u \in \UU(N)$ such that $uBu^* =
A$.\end{proof}
\begin{rem} If one can remove the normality assumption on the subgroup $\Sigma<\G$ in the proof of Lemma \ref{orthog} then the previous
intertwining result holds for any $\Sigma$ and therefore will allow one to completely recover the $W^*$-superrigidity results obtained in
\cite{popavaes09,fimavaes2010}.
\end{rem}
\begin{rem} Taking $\Sigma=\{e\}$ in the previous theorem we have that \emph{any} free, ergodic action
on a probability space $\G\curvearrowright X$ of a group $\G$ with positive first $\ell^2$-Betti number that admit a nonamenable subgroup
$\Omega <\G$ with relative property (T), gives rise to a von Neumann algebra with unique group measure space Cartan subalgebra. This result may
be interpreted as a positive evidence supporting the following general conjecture of Ioana, Popa and the authors:\end{rem}
 \begin{conj}\emph{Any} free, ergodic action on a probability space $\G\curvearrowright X$ of \emph{any} group $\G$ with positive
first $\ell^2$-Betti number gives rise to a von Neumann algebra with unique Cartan subalgebra.
\end{conj}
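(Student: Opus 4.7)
The plan is to attempt the conjecture by pushing the strategy developed in the paper to its logical endpoint: reduce uniqueness of an arbitrary Cartan subalgebra $B \subset N = L^{\infty}(X) \rtimes \G$ to an intertwining $B \prec_N L^{\infty}(X)$, then close this intertwining via Popa's conjugacy criterion (Theorem~\ref{intertwining-conj}). Write $A = L^{\infty}(X)$. By the Bekka--Valette / Peterson--Thom construction the hypothesis $\b^{(2)}_1(\G) > 0$ produces an unbounded $1$-cocycle $c : \G \to \ell^2(\G)$ into the left-regular representation, and hence a closable real derivation $\dd$ on $N$ with resolvent deformation $\dd_\alpha$ valued in an $N$-bimodule $\HH_c$ that is mixing relative to the trivial subgroup (i.e.\ weakly contained in the coarse bimodule). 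The unboundedness of $c$ forces $\dd_\alpha$ to fail to converge uniformly on $(N)_1$.

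The first step is the following contrapositive reformulation. Suppose $B \nprec_N A$. Then by Theorem~\ref{unifconvonnormalizer}, if one can show that $\dd_\alpha \to 0$ uniformly on $(B)_1$, the fact that $B$ is Cartan (so $\mathcal N_N(B)'' = N$ and $B'\cap N = B$) promotes this to uniform convergence on the unit ball of $N$, contradicting unboundedness of $c$. Thus the entire conjecture reduces to proving the following \emph{uniform convergence} statement: for every diffuse amenable subalgebra $B \subset N$ whose normalizer generates $N$, the resolvent deformation $\dd_\alpha$ converges to $0$ uniformly on $(B)_1$.

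To attack the uniform convergence statement I would imitate the two-step scheme of Section~\ref{sec:convergence}, but replace the role played by the rigid subgroup $\Omega$ in the transfer lemma (Lemma~\ref{transfer}) by a weak-compactness input of Ozawa--Popa type. Concretely, the action $\mathcal N_N(B) \curvearrowright B$ by conjugation is weakly compact in a suitable sense whenever $B$ is amenable (exploiting the coarse nature of $\HH_c$, which is weakly contained in $L^2N \oo L^2N$). Combining this weak compactness with the bimodule version of Lemma~\ref{orthog} applied to $\HH_c$, one would play the cocycle identity $\dd_\alpha(uxu^*) \approx u \dd_\alpha(x) u^*$ against averages over the normalizer to produce, for each $k$, a nontrivial projection $r_k$ and an infinite sequence $v_n^k \in \mathcal N_N(B)$ satisfying exactly the hypotheses (\ref{78})--(\ref{248}) of Theorem~\ref{derivunifconv} (with $\Sigma = \{e\}$). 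Feeding this into Theorem~\ref{derivunifconv} yields a nonzero central projection $q \in \mathcal Z(\mathcal N_N(B)'') = \mathcal Z(N) = \C 1$ (so $q = 1$) on which $\dd_\alpha$ is uniformly small on $(B)_1$, completing the argument.

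The main obstacle, and in my view the heart of the conjecture, is the construction of the intertwining sequence $v_n^k$ without any a priori rigidity assumption. In Lemma~\ref{transfer} this sequence was produced by a spectral argument inside a specific $\Delta(L\Omega)$-central vector; here the analogue would be to average $\Delta(b)$ over $b \in B$ (or over a weakly compact normalizer) against a vector built from the conditional expectation $E_A$, and to show that the resulting asymptotic vector cannot be $\Delta(L\Omega)(1-p)$-central for the maximal uniform-convergence projection $p$. Without property (T) in the picture this forcing step is delicate: the only input we have is that $c$ is unbounded and valued in the left regular. A natural intermediate target is therefore the case where $\G$ satisfies, in addition, a mild weak-amenability or Haagerup condition, since for such groups the Ozawa--Popa machinery from \cite{ozawapopa2010} supplies the required weak compactness automatically, and one can hope that the deformation/derivation interplay above then goes through unchanged. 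Removing even this hypothesis would seem to require a genuinely new idea, presumably a direct cohomological mechanism that converts unboundedness of $c$ into uniform convergence on amenable subalgebras with full normalizer.
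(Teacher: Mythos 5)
The statement you are asked to prove is stated in the paper as an open conjecture (of Ioana, Popa and the authors); the paper explicitly says a positive answer ``in its full generality is still out of reach'' and establishes it only under additional hypotheses (a nonamenable subgroup with relative property (T), profiniteness of the action, CMAP, etc.). Your proposal is accordingly structured as a reduction rather than a proof, and to your credit you identify the true obstruction yourself: without any rigidity input there is no known mechanism for producing, for each $k$, an infinite sequence $v^k_n$ in $\mathcal N_N(B)$ on which $\dd_\alpha$ is uniformly small and which simultaneously escapes $A$ in the sense of (\ref{248}). In the paper this sequence comes from relative property (T) of $\Omega$ (or spectral gap), transported through the comultiplication $\Delta$; unboundedness of the cocycle alone gives no control of $\dd_\alpha$ on any infinite subset of the normalizer. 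So the argument has a genuine, acknowledged gap at its central step and does not establish the conjecture.

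Two further points. First, your claim that the conjugation action $\mathcal N_N(B)\curvearrowright B$ is ``weakly compact in a suitable sense whenever $B$ is amenable'' is false: weak compactness in the Ozawa--Popa sense is an additional hypothesis on the inclusion, available for instance when $\G$ has the complete metric approximation property or when the action is profinite, and is not a consequence of amenability of $B$ (every Cartan subalgebra is abelian, hence amenable, so otherwise the Ozawa--Popa hypothesis would be vacuous and their theorems would apply to all II$_1$ factors with Cartan subalgebras). Second, the conjecture concerns \emph{arbitrary} Cartan subalgebras, so the transfer-lemma machinery of Section~\ref{sec:transfer} is unavailable from the outset: the comultiplication $\Delta$ is built from a second group-measure-space decomposition $N=B\rtimes\La$, which an arbitrary Cartan subalgebra does not furnish. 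Your instinct to substitute weak compactness for $\Delta$ is the right kind of idea --- it is what \cite{ozawapopa07,ozawapopa2010} do --- but it only recovers the already-known special cases enumerated after the conjecture in the paper, not the general statement.
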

Even though a positive answer to this conjecture in its full generality is still out of reach there are instances when it is known to be true.
All known examples however assume some strong conditions on either the group or the action mostly to insure that, besides strong deformability,
the von Neumann algebra $L^{\infty}(X)\rtimes \G$ also possesses a strong pole of rigidity. To enumerate a few examples: \begin{itemize}\item
Any \emph{profinite} action $\G\curvearrowright X$ where $\G$ is a nonamenable free group \cite{ozawapopa07,ozawapopa2010} or more generally
any group with positive first $\ell^2$-Betti number that has the completely metric approximation property \cite{dabrowski,sinclair}. In this
case rigidity arises from the complete metric approximation property and profiniteness of the action; \item Any \emph{profinite} action
$\G\curvearrowright X$ where $\G$ is a group that admits an unbounded $1$-cocycle into a mixing representation and does not have the Haagerup
property \cite{peterson2009}. Here rigidity arises as a mix between profiniteness of the action and the absence of Haagerup property for
$\G$;\item Any action $\G\curvearrowright X$ where $\G$ is any free product of a nontrivial group and either an infinite property (T) group or
a product of two nonamenable groups \cite{popavaes09,fimavaes2010}. Obviously in this case rigidity is inherited from the acting group
$\G$.\end{itemize} Note that our result is mostly in the spirit of the second and third situation above. It will be very interesting to
investigate if the conjecture is true in cases where apriori there is a lack of rigidity, for instance for \emph{any} free, ergodic action of a
nonamenable free group.
\begin{appendix}
\section{On Popa's unique HT Cartan subalgebra Theorem}

We end by mentioning that much of the difficulty in the previous theorems was to obtain uniform convergence of a deformation on the ``mystery''
Cartan subalgebra. If we assume that we already have convergence (for instance if we assume the inclusion $(B \subset N)$ is rigid) then much
of the difficulty diminishes.  In this setting we can weaken our assumptions on the group $\G$ and in this way obtain a generalization of
Popa's unique HT Cartan subalgebra theorem (Theorem 6.2 in \cite{popa2001}).  This result was previously presented by the second author at the
Workshop on von Neumann Algebras and Ergodic Theory held at UCLA in 2007.

\begin{thm}
Let $\G$ be a group such that there exists an unbounded cocycle into a mixing representation.  Suppose $A$ is an abelian von Neumann algebra,
and $\G \car A$ is a free, ergodic action.  Let $N = A \rtimes \G$ and suppose that $(B_0 \subset N)$ is a rigid inclusion such that $B = B_0'
\cap N$ is a Cartan subalgebra.  Then there exists a unitary $u \in \UU(N)$ such that $uBu^* = A$.
\end{thm}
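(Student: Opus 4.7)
The plan is to prove $B \prec_N A$ and then apply Popa's conjugacy criterion for Cartan subalgebras (Theorem~\ref{intertwining-conj}) with $B$ itself playing the role of the ``$B_0$'' of that criterion --- this is legitimate because the Cartan subalgebra $B$ is a MASA, so $B' \cap N = B$ --- yielding a unitary $u \in N$ with $uAu^* = B$. Note that $N$ is a II$_1$ factor (free ergodic action of $\G$ on abelian $A$) and that $B$ is semiregular since $\mathcal N_N(B)'' = N$ is a factor. Let $\dd: N \to \HH_\pi$ be the closable real derivation associated with the unbounded cocycle $c$ into the mixing representation $\pi$, with resolvent deformation $(\dd_\alpha)_{\alpha > 0}$. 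A direct computation gives $\|\dd_\alpha(u_\g)\|^2 = \|c(\g)\|^2/(\alpha + \|c(\g)\|^2)$, which tends to $1$ along any sequence $\g_n \in \G$ with $\|c(\g_n)\| \to \infty$; hence $\dd_\alpha$ cannot converge uniformly to zero on $(N)_1$.

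I would proceed by dichotomy on whether $B_0 \prec_N A$. If it does, Popa's conjugacy criterion applied to the chain $B_0 \subset B = B_0' \cap N$ immediately yields the desired unitary and we are done. Otherwise $B_0 \nprec_N A$, and by Corollary 2.3 of \cite{popa2004} there is a sequence of unitaries $v_n \in \mathcal U(B_0)$ satisfying $\|E_A(xv_n y)\|_2 \to 0$ for all $x, y \in N$; such a sequence automatically contains infinitely many distinct elements, since it tends weakly to zero. Rigidity of $(B_0 \subset N)$ forces $\dd_\alpha \to 0$ uniformly on $(B_0)_1$, so for each $k \in \N$ there is $\alpha_k > 0$ with $\|\dd_\alpha(v_n)\| \le 1/k$ for all $\alpha > \alpha_k$ and all $n$. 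Since $B_0 \subset B$ and $B$ is abelian, $\{v_n\} \subset B \subset \mathcal N_N(B)$, and $r := 1$ trivially lies in $\{v_n\}' \cap N$.

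Applying Theorem~\ref{derivunifconv} with $\Sigma = \{e\}$ (so that ``mixing relative to $\Sigma$'' reduces to plain mixing, and the vanishing-on-$\Sigma$ condition is vacuous), $M = N$, $\mathcal F = \{v_n\}$, and $r = 1$, produces a nonzero projection $q \in \mathcal Z(\mathcal N_N(B)'')$ such that $\dd_\alpha \to 0$ uniformly on $q(B)_1$. As $N$ is a factor we have $\mathcal Z(\mathcal N_N(B)'') = \mathcal Z(N) = \C$, forcing $q = 1$ and hence uniform convergence of $\dd_\alpha$ to zero on all of $(B)_1$.

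To conclude, I would rule out $B \nprec_N A$ by contradiction: if it held, then since $1 \in B' \cap N$ (as $B$ is a MASA), Theorem~\ref{unifconvonnormalizer} applied with $p = 1$ would upgrade the uniform convergence on $(B)_1$ to uniform convergence on $(\mathcal N_N(B)'')_1 = (N)_1$, contradicting the unboundedness of $c$. Hence $B \prec_N A$, and Popa's conjugacy criterion finishes the proof. The only mildly delicate point is confirming that the hypotheses of Theorem~\ref{derivunifconv} can be arranged from the rigidity assumption on $B_0$ alone: the bound on $\dd_\alpha(v_n)$ comes from rigidity of $(B_0 \subset N)$, while the intertwining condition on $E_A(xrv_n y)$ is exactly the case assumption $B_0 \nprec_N A$ together with the convenient choice $r = 1$ made possible by $B_0 \subset B$.
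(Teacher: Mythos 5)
Your proposal is correct and follows essentially the same strategy as the paper's proof: rigidity gives uniform convergence of $\dd_\alpha$ on $(B_0)_1$; under the contradiction hypothesis that the relevant algebra does not embed into $A$, this is upgraded to uniform convergence on $(B)_1$ and then on $(N)_1=(\mathcal N_N(B)'')_1$, contradicting unboundedness of the cocycle; and Popa's conjugacy criterion finishes. The only deviation is in the first upgrading step: the paper applies Theorem~\ref{unifconvonnormalizer} directly to $B_0$ (using $B=B_0'\cap N\subset \mathcal N_N(B_0)''$), whereas you route it through the heavier Theorem~\ref{derivunifconv} with $\mathcal F=\{v_n\}\subset\mathcal U(B_0)\subset\mathcal N_N(B)$ and $r=1$; both are valid, yours simply uses more machinery than necessary.
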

\begin{proof}
Let $\dd:N \to \HH$ be the corresponding derivation associated to $\G$.  Since $\dd_\alpha$ converges uniformly on $B_0$, if $B_0 \not\prec_N
A$ then using Lemma~\ref{14} together with Popa's intertwining theorem \cite{popa2004} and the proof of Theorem 4.5 in \cite{peterson2006} we
have that $\dd_\alpha$ converges uniformly on $(B)_1 \subset (\NN_N(B_0)'')_1$.  The same argument then implies that $\dd_\alpha$ converges
uniformly on $(N)_1 = (\NN_N(B)'')_1$ which would contradict the original cocycle being unbounded.

Thus $B_0 \prec_N A$ and hence by Popa's conjugacy criterion for Cartan subalgebras we obtain the result.
\end{proof}

\begin{rem}
By \cite{akemannwalter} if a group $\G$ has the Haagerup property then there exists a proper cocycle into a mixing representation and so the
previous result applies when $\G$ has the Haagerup property as in Theorem 6.2 in \cite{popa2001}.
\end{rem}

\begin{cor}
Let $\G$ be a group such that $\b_1^{(2)}(\G) > 0$, and suppose $0 < \b_n^{(2)}(\G) < \infty$ for some $n \in \N$. Then there exists a free
ergodic action $\G \car A$ such that $\mathcal F( A \rtimes \G ) = \{ 1 \}$.
\end{cor}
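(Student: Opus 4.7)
My overall strategy is to combine the just-proven uniqueness-of-HT-Cartan theorem with Gaboriau's scaling formula for the $\ell^2$-Betti numbers of amplified equivalence relations. The hypothesis $\b_1^{(2)}(\G)>0$, by \cite{bekkavalette, petersonthom2007}, supplies $\G$ with an unbounded $1$-cocycle into the left regular representation, which is mixing; so $\G$ already satisfies the cocycle hypothesis required by the previous theorem.

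The main construction step is to produce a free, ergodic, p.m.p.\ action $\G\car A$ together with a rigid von Neumann subalgebra $B_0\subset N=A\rtimes\G$ satisfying $B_0'\cap N = A$, thereby realizing $N$ as an HT factor with HT Cartan $A$. Following the template of \cite{popa2001}, one would use a homomorphism of $\G$ into an ambient group $H\ltimes K$ carrying a relative property (T) pair (for example $\SL(n,\Z)\ltimes\Z^n$ with $n\ge 2$) and let $B_0$ be the group von Neumann algebra of (the image of) $K$ inside $N$. The presence of such an action is precisely what allows the appendix theorem to be brought to bear on $\G$.

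With the action in place I would argue as follows. Suppose $t\in\mathcal F(N)$ with an implementing isomorphism $\theta:N\xrightarrow{\sim}N^t$. Amplifying the rigid inclusion $(B_0\subset N)$ produces a rigid inclusion $(B_0^t\subset N^t)$ whose relative commutant is the amplified Cartan $A^t\subset N^t$. Viewing $N^t=\theta(A)\rtimes\G$ via $\theta$, the theorem just proved yields a unitary $u\in N^t$ conjugating $A^t$ onto $\theta(A)$. Transporting back by $\theta^{-1}$ exhibits a stable orbit equivalence of scaling factor $t$ between the orbit equivalence relation $\mathcal R=\mathcal R_{\G\car A}$ and itself. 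Gaboriau's theorem gives $\b_n^{(2)}(\mathcal R)=\b_n^{(2)}(\G)$, and the standard scaling formula gives $\b_n^{(2)}(\mathcal R^t)=t^{-1}\b_n^{(2)}(\mathcal R)$; combining these with $0<\b_n^{(2)}(\G)<\infty$ forces $t=1$, so $\mathcal F(N)=\{1\}$.

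The primary obstacle is the existence of the HT action of $\G$: for groups equipped with an obvious homomorphism into a relative property (T) semidirect product the construction is direct, but in general (e.g.\ for the free groups $\F_n$, which have $\b_1^{(2)}=n-1$) one must work harder to manufacture a rigid inclusion $B_0\subset N$ with Cartan relative commutant. The value of the appendix theorem is precisely that it dispenses with the Haagerup constraint on $\G$ used in Popa's original setup, thereby placing all groups with an unbounded cocycle into a mixing representation -- in particular all groups with $\b_1^{(2)}(\G)>0$ -- within scope of the argument. The remaining steps -- the amplification of the rigid inclusion and the invocation of Gaboriau's scaling -- are then routine.
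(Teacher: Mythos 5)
Your overall strategy is the right one and, in its second half, coincides with the paper's: once one has a free ergodic action $\G\car A$ admitting a rigid diffuse subalgebra $B_0\subset N=A\rtimes\G$ with $B_0'\cap N=A$, the appendix theorem (whose cocycle hypothesis is supplied by $\b_1^{(2)}(\G)>0$, as you note) forces any Cartan subalgebra arising as the relative commutant of a rigid subalgebra to be unitarily conjugate to $A$; applying this in each amplification $N^t$ identifies $\mathcal F(N)$ with the fundamental group of the inclusion $(A\subset N)$, i.e.\ of the orbit equivalence relation, and Gaboriau's scaling formula $\b_n^{(2)}(\mathcal R^t)=t^{-1}\b_n^{(2)}(\mathcal R)$ together with $0<\b_n^{(2)}(\G)<\infty$ kills every $t\neq 1$. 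That part of your argument is correct and is exactly what the paper does (compressed into the line $\mathcal F(A\rtimes\G)=\mathcal F(A\subset A\rtimes\G)$ plus the citation of \cite{gaboriaubetti}).

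The genuine gap is the step you yourself flag as ``the primary obstacle'': you never actually produce the action. Your proposed construction---a homomorphism of $\G$ into a semidirect product such as $\operatorname{SL}(n,\Z)\ltimes\Z^n$ carrying a relative property (T) pair---is available only for very special $\G$ and certainly not for an arbitrary group with $\b_1^{(2)}(\G)>0$ (your own example of $\F_n$ already defeats it), so as written the proof does not go through. The missing ingredient is the by-now standard fact, which the paper quotes, that \emph{every non-amenable group} admits a free ergodic p.m.p.\ action whose crossed product contains a diffuse rigid subalgebra whose relative commutant is the Cartan subalgebra; this is Theorem 4.3 in \cite{ioana2007} and rests on the Gaboriau--Lyons measurable solution to von Neumann's problem \cite{gaboriau2010}, the rigid actions of free groups from \cite{gaboriau2005}, and the co-induction machinery of \cite{epstein07,ioana2007}. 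Since $\b_1^{(2)}(\G)>0$ forces $\G$ to be non-amenable, this result applies and closes the gap; without citing it (or reproving it) your argument is incomplete precisely at its load-bearing existence statement.
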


\begin{proof}
By the results in \cite{epstein07}, \cite{gaboriau2010}, \cite{ioana2007}, and \cite{gaboriau2005}, (this appears explicitly as Theorem 4.3 in
\cite{ioana2007}), every non-amenable group $\G$ has a free ergodic action $\G \car A$ such that there is a rigid inclusion $(A_0 \subset N)$
with $A = A_0' \cap N$ (in fact they have uncountably non-orbit equivalent such actions).  If $\b_1^{(2)}(\G) > 0$ then by the previous theorem
$A$ is the only Cartan subalgebra which contains such a rigid subalgebra.  Therefore $\mathcal F(A \rtimes \G) = \mathcal F( A \subset A
\rtimes \G)$, and the result follows from \cite{gaboriaubetti}.
\end{proof}

\end{appendix}

\end{document}